\newtheorem{theorem}{Theorem}[section]
\newtheorem{corollary}[theorem]{Corollary}
\newtheorem{lemma}[theorem]{Lemma}
\newtheorem{proposition}[theorem]{Proposition}
\par\noindent{\bf Proposition \ref{res:hiper}.}\!\!
\par\noindent{\bf Theorem \ref{result43}.}\!\!
\par\noindent{\it Sketch of the proof}.  
\hfill\linebreak[2]\hspace*{\fill}$\circlearrowleft$}
\par\noindent{\it Proof of Proposition }\ref{prop:stab:smc}.  
\hfill\linebreak[2]\hspace*{\fill}$\circlearrowleft$}
\newenvironment{proof2}%
	{\par\noindent{\it Proofs of Propositions }\ref{adap:mon}{\it and }\ref{simult:adap}.\!\!\!
	\nopagebreak\normalsize}%
	{\hfill\linebreak[2]\hspace*{\fill}$\circlearrowleft$}
\theoremstyle{definition}
       \newtheorem{definition}[theorem]{Definition}
       \newtheorem{remark}[theorem]{Remark}
       \newtheorem{parrafo}[theorem]{{\!}}  }
\numberwithin{equation}{theorem}
\newcommand{\nat}{\mathbb N}
\newcommand{\calo}{{\mathcal {O}}}
\DeclareMathOperator{\ord}{ord}
\DeclareMathOperator{\Sing}{Sing}
\DeclareMathOperator{\Spec}{Spec}
\DeclareMathOperator{\In}{In}
\DeclareMathOperator{\Gr}{Gr}
\newcommand{\R}{{\mathcal R}}
\newcommand{\U}{{\mathcal U}}
\newcommand{\G}{{\mathcal G}}
\newcommand{\C}{{\mathcal C}}
\renewcommand{\L}{{\mathcal L}}
\newcommand{\p}{{\mathfrak p}}
\renewcommand{\S}{{\mathbb S}}
\newcommand{\Z}{{\mathbb Z}}
\newcommand{\m}{{\mathcal{M}}}
\renewcommand{\P}{\mathcal{P}}
\newcommand{\A}{\mathbb{A}}
\newcommand{\id}[1]{\langle #1 \rangle}
\newcommand{\x}{{\mathbf{x}}}
\definecolor{darkpurple}{rgb}{0.28,0.24,0.55}
\definecolor{lightblue}{rgb}{0,0.75,1}
\title{Monoidal transforms and invariants of singularities in positive characteristic}
\author{Ang\'elica Benito}
\author{Orlando E. Villamayor U.}
\thanks{2000 {\em Mathematics subject classification. 14E15.}}
 \thanks{The authors are partially supported by MTM2009-07291.}
\date{\today}
\address{Dpto. Matem\'aticas,  Universidad
Aut\'onoma de Madrid and ICMAT-UAM\\
Ciudad Universitaria de Cantoblanco, 28049 Madrid, Spain}
\email[Ang\'elica Benito]{angelica.benito@uam.es}
\email[Orlando E. Villamayor U.]{villamayor@uam.es}
\keywords{Positive Characteristic. Singularities. Differential operators. Rees algebras.}
\begin{document}

\maketitle


\begin{abstract}
The problem of resolution of singularities in positive characteristic can be reformulated as follows: Fix a hypersurface $X$, embedded in a smooth scheme, with points of multiplicity at most $n$. Let an $n$-sequence of transformations of $X$ be a finite composition of monoidal transformations with centers included in the $n$-fold points of $X$, and of its successive strict transforms. The open problem (in positive characteristic) is to prove that there is an $n$-sequence such that the final strict transform of $X$ has no points of multiplicity $n$ (no $n$-fold points).

In characteristic zero, such an $n$-sequence is defined in two steps: the first consisting in the transformation of $X$ to a hypersurface with $n$-fold points in the so called \emph{monomial case}. The second step consists in the elimination of these $n$-fold points (in the monomial case), which is achieved by a simple combinatorial procedure for choices of centers.

The invariants treated in this work allow us to define a notion of \emph{strong monomial case} which parallels that of monomial case in characteristic zero: If a hypersurface is within the strong monomial case we prove that a resolution can be achieved in a combinatorial manner.
\end{abstract}

%
%
%
%
%
%
%
%
%
%
%
%
%
%
%

{\tableofcontents}

\section{{Introduction.}}\label{intro}
\begin{parrafo}
The objective of this paper is to study invariants of singularities in positive characteristic. A particular motivation is to give invariants that would yield a sequence of monoidal transformations to eliminate the points of highest multiplicity of a hypersurface $X$. To be precise, let $V$  be a smooth scheme of dimension $d$ over a perfect field $k$ of characteristic $p>0$, and let $X$ be a hypersurface in $V$ with highest multiplicity $n$. The problem is to define a sequence 
\begin{equation}\label{opq}
\xymatrix@R=0pc@C=0pc{
X & & & & & X_1 &  & & & &  &  & & & &   X_r  & & & & & & & & & & X_N\\
V  &  & & & &   V_{1}\ar[lllll]_{\pi_{C_0}}   & & & & & \dots \ar[lllll]_{\pi_{C_1}} &  & & & &   V_{r}\ar[lllll]_{\pi_{C_{r-1}}}   & & & & & \dots \ar[lllll]_{\pi_{C_{r}}} &  & & & &   V_{N}\ar[lllll]_{\pi_{C_{N-1}}}  
}\end{equation}
where each $V_{i-1}\overset{\pi_i}\longleftarrow V_{i}$ is a monoidal transformation with center $C_{i-1}$ included in the $n$-fold points of 
$X_{i-1}$, so that $X_N$ has no point of multiplicity $n$. Here each $X_i\subset V_{i}$ denotes the strict transform of $X_{i-1}$ by $\pi_{C_{i-1}}$. We require, in addition, that the exceptional locus of $V\longleftarrow V_{N}$ is a union of $N$ hypersurfaces with normal crossings at $V_{N}$.  A sequence with this property is said to define a \emph{simplification of the $n$-fold points} of $X$.

In characteristic zero, simplifications of $n$-fold points of $X$ are known to exist. This is usually done in two steps. The first step consists of a sequence of, say  $r$, monoidal transformations, so that the set of points of highest multiplicity $n$ of $X_r$ is within the so called \emph{monomial case}. The second step consists of the elimination of the $n$-fold points of the  hypersurface $X_r$, which is assumed to be in the monomial case. The latter step is rather simple, and it can be achieved by a  combinatorial choice of centers. 

Both steps rely on Hironaka's main \emph{inductive invariant}, say $\ord^{(d-1)}(x)\in\mathbb{Q}$, defined for $x$ in the highest multiplicity locus of the hypersurface. In fact, these invariants lead to the construction  of a sequence in a such a way that $X_r$ is in the monomial case. 
The role of Hironaka's main inductive function in both steps mentioned above, always in characteristic zero, will be recalled in \ref{pi13}.

In this work we study Hironaka's inductive function over perfect fields of arbitrary characteristic. We will  introduce the notion of \emph{strong monomial case} for a hypersurface in positive characteristic. This notion will be characterized in terms of Hironaka's inductive functions. It  parallels that of monomial case in characteristic zero, i.e.,  if $X_r$ is in the strong monomial case, then elimination of $n$-fold points is achieved in a combinatorial manner.

In the case of hypersurfaces in positive characteristic, a canonical sequence of transformations of $X$ was defined in \cite{BV3}. This sequence transforms  $X$  to an embedded hypersurface, say $X_r$,
which is closely related to the {\em monomial case}, but still weaker than the \emph{strong monomial case} treated here.
The simplification of $n$-fold problem  would be solved if one could fill the gap between the weak monomial case in \cite{BV3} and our strong monomial case. To be precise, the open problem of simplification (and of resolution of singularities) would be solved if one can define a sequence of monoidal transformations that transforms a hypersurface in the monomial case into one in the strong monomial case.

 This can be easily achieved in low dimension, and we prove resolution of singularities of  $2$-dimensional schemes by means of the invariants introduced here. A detailed proof of this fact can be found in \cite{BeVsup}. 
\end{parrafo}

\begin{parrafo}\label{pi12}
Assume, for simplicity, that $V$ is affine and $X=V(f)$ is a hypersurface with highest multiplicity $n$. We will first attach to the previous data the algebra $\calo_{V}[fW^n](\subset \calo_{V}[W]$) with $n$ as above. Namely, the $\calo_{V}$-subalgebra of $\calo_{V}[W]$ generated by the element $fW^n$. The notion of transformation of  hypersurfaces (with the center included in the subset of $n$-fold points) has a natural reformulation in the language of algebras. Moreover, the task of defining a sequence (\ref{opq}), that eliminates the $n$-fold points of $X$, by means of monoidal transformations, can be also expressed in terms of algebras and transformations of algebras (see \ref{par1}).

This reformulation of  the simplification problem in terms of algebras is well justified. In fact, the original algebra $\calo_{V}[fW^n]$ can be extended canonically to a so called \emph{differential algebra} so both are strongly linked: for the purpose of constructing a simplification of $\calo_{V}[fW^n]$, there is no harm in replacing it by its differential extension. Over fields of characteristic zero, this procedure is well-known.
In fact, differential algebras (see \ref{rp23}) are closely related to the theory of maximal contact in characteristic 0. In such context, hypersurfaces of maximal contact allow us to reformulate the problem of simplification with the simplification of a new algebra, defined over a smooth hypersurface $\overline{V}$, and hence in one dimension less. $\overline{V}$ is  called a \emph{hypersurface of maximal contact}. This form of induction is formulated in the language of algebras, the correspondient algebra defined over $\overline{V}$ is known as the \emph{coefficient algebra}.
\end{parrafo}

\begin{parrafo}\label{pi13}  In problems of resolution of singularities, it is natural to consider sequences of transformations of the form
\begin{equation}\label{opqA}
\xymatrix@R=0pc@C=0pc{
V  &  & & & &   V_{1}\ar[lllll]_{\pi_{C_0}}   & & & & & \dots \ar[lllll]_{\pi_{C_1}} &  & & & &   V_{r}\ar[lllll]_{\pi_{C_{r-1}}}
}\end{equation}
with the additional condition that the exceptional locus of the sequence, say $E_r=\{H_1,\dots,H_r\}$, are hypersurfaces having only normal crossings at $V_r$. A \emph{monomial algebra} in $V_r$ is an algebra of the form
$\calo_{V_r}[I(H_1)^{\alpha_1}\dots I(H_r)^{\alpha_r}W^s]$ for some $s,\alpha_i\in\mathbb{Z}_{\geq 0}$.

In the case of characteristic 0, the simplification of $n$-fold points can be achieved in two steps, both of them expressed in terms of algebras, once a hypersurface of maximal contact, say $\overline{V}$, is fixed:

\vspace{0.2cm}

\noindent{\sf{\bfseries (STEP 1)}} in which a sequence of monoidal transformations is defined over the hypersurface of maximal contact, say 
\begin{equation}\label{opq2}
\xymatrix@R=0pc@C=0pc{
\overline{V}  &  & & & &   \overline{V}_{1}\ar[lllll]_{\pi_0}   & & & & & \dots \ar[lllll]_{\pi_1} &  & & & &   \overline{V}_{r}\ar[lllll]_{\pi_{r-1}},
}\end{equation}
so that the coefficient algebra is transformed into a monomial algebra supported on the exceptional locus, say  $\calo_{\overline{V}_r}[I(H_1)^{\alpha_1}\dots I(H_r)^{\alpha_r}W^s]$. This sequence can be defined so as to induce a sequence (\ref{opqA}), and 
in this case, the $n$-fold points of $X_r$ (the strict transform of $X$) are said to be in the {\em monomial case}.

\vspace{0.2cm}

\noindent{\sf{\bfseries (STEP 2)}} in which a simplification of the $n$-fold points of $X_r$ (monomial case) is defined, say
\begin{equation}\label{opqB}
\xymatrix@R=0pc@C=0pc{
V_r  &  & & & &   V_{r+1}\ar[lllll]_{\pi_{r}}   & & & & & \dots \ar[lllll]_{} &  & & & &   V_{N}\ar[lllll]_{\pi_{N-1}}
}\end{equation}
 This step is achieved in an easy combinatorial manner. This procedure of choice of centers is defined only in terms of the exponents $\alpha_i$ of the monomial algebra obtained in Step 1.
 
 \vspace{0.2cm}

All these arguments (always in characteristic 0), rely strongly 
on the Hironaka's inductive functions $\ord^{(d-1)}$ (see (\ref{dford})), defined in terms of the coefficient algebra. In fact, Hironaka's functions allow us to attach to an arbitrary sequence (\ref{opqA}) a monomial algebra $\calo_{V_r}[I(H_1)^{\alpha_1}\dots I(H_r)^{\alpha_r}W^s]$. To be precise, this is done by setting $\frac{\alpha_i}{s}+1=\ord_{i-1}^{(d-1)}(y_{i-1})$ ($i=1,\dots,r$); here  the right hand side is the evaluation of the inductive function at $y_{i-1}$, the generic point of the center $C_{i-1}$.
\end{parrafo}

\begin{parrafo}{\bf Main objetives of this work}\label{pi14}. In this work we consider schemes over perfect fields of positive characteristic. The two main objectives are:
\begin{enumerate}
\item[(1)]  to define an analogue to Hironaka's inductive functions, called here $v-ord_i^{(d-1)}$ (Main Theorem 1 in \ref{result43}), with values in $\mathbb{Q}$. These functions enable us to attach a monomial algebra $\calo_{V_r}[I(H_1)^{h_1}\dots I(H_r)^{h_r}W^s]$ to a sequence of transformations (\ref{opqA}),  setting as before $\frac{h_i}{s}+1=v-ord_{i-1}^{(d-1)}(y_{i-1})$ (see Main Theorem 2 in \ref{MT2}). 

\item[(2)] To characterize, by numerical invariants, a case called here \emph{strong monomial case} (Definition \ref{def_smc}), in which a combinatorial resolution of the monomial algebra defines, as in Step 2, a simplification of $n$-fold points (Theorem \ref{claim:gamma:res}). This property will rely strongly on Main Theorem 2.
\end{enumerate}
\end{parrafo}

\begin{parrafo}{\bf Differences with characteristic zero}. In characteristic zero, Hironaka's inductive function $\ord^{(d-1)}$ is upper semi-continuous. This property follows from a form of coherence, and the proof of this property requires some form of patching of local data, and all together it is quite involved. In positive characteristic the function $v-ord^{(d-1)}$ is not upper semi-continuous and therefore we do not go through this kind of difficulty. So there is no coherence or patching to be proved in the positive characteristic case. Despite this fact, this function is essential in the study of singularities and we show that it leads to (1) and (2) in \ref{pi14}. 

In characteristic zero the value of the function $\ord^{(d-1)}$, at a given point, is computed by fixing a hypersurface of maximal contact.  
As there is no maximal contact in positive characteristic, we replace reduction to hypersurfaces of maximal contact by \emph{transversal projections}: $V^{(d)}\longrightarrow V^{(d-1)}$ defined in \'etale topology (Definition \ref{trpt}). In this setting, algebras over the smooth scheme $V^{(d-1)}$ are defined; they are called \emph{elimination algebra} (\ref{csc}). In characteristic zero elimination algebras parallel the role of the coefficients algebras.

We use here transversal projections and elimination algebras to compute the value of the function $v-ord^{(d-1)}$ at a given point, which is a rational number. To fix ideas let $x$ be an $n$-fold point of $X=V(f)\subset V^{(d)}$. The Weierstrass Preparation Theorem ensures that one can choose a regular system of parameters $\{z,x_1,\dots,x_{r-1}\}$ so that at the completion $\widehat{\calo}_{V^{(d)},x}=k'[[z,x_1,\dots,x_{r-1}]]$ ($r=d$ if $x$ is closed) we can take
\begin{equation}\label{eqintrof}
f(z)=z^n+a_1z^{n-1}+\dots+a_n\ \hbox{ with }\ a_i\in k'[[x_1,\dots,x_{r-1}]].
\end{equation}
A rational number $\geq1$ is defined as
\begin{equation}\label{eqmax}
\max_ z
\Big\{\min_{1\leq i\leq n}\Big\{\frac{\nu_x(a_i)}{i}\Big\}\Big\}\in\mathbb{Q},\ \ (\nu_x(a_i)\hbox{ is the order at }k'[[x_1,\dots,x_{r-1}]]).
\end{equation}
This is the maximal \emph{slope}, for the different choices of $z$, but always fixing the inclusion of rings  $k'[[x_1,\dots,x_{r-1}]]\subset k'[[z,x_1,\dots,x_{r-1}]]=\widehat{\calo}_{V^{(d)},x}$. Fixing an inclusion of rings is formulated here by fixing a morphism of smooth schemes $V^{(d)}\longrightarrow V^{(d-1)}$ (called here \emph{projection}). In order to parallel the presentation in  (\ref{eqintrof}) (Weierstrass Preparation Theorem) we need to consider \'etale topology. Projections for which $X$ can be expressed by an equation as in (\ref{eqintrof}) (where $n$ is the multiplicity of $X$ at the point), will be said to be \emph{transversal} at $x$.

Our setting will be slightly more general. Once a transversal projection $V^{(d)}\overset{\beta}{\longrightarrow} V^{(d-1)}$ is fixed, we will consider an expression
\begin{equation}\label{eqintrof2}
f(z)=z^n+a_1z^{n-1}+\dots+a_n\in\calo_{V^{(d-1)}}[z]
\end{equation}
where $a_i$ are global functions on $V^{(d-1)}$ and where $z$ is a global function on $V^{(d)}$ so that $\{dz\}$ is a basis of $\Omega_\beta^1$, the sheaf of $\beta$-relative differentials. In this cases,  the smooth hypersurface $\{z=0\}$ is a section of $V^{(d)}\overset{\beta}{\longrightarrow}V^{(d-1)}$. We will abuse the notation and say that the function $z$ is a {\em transversal section} of $\beta$. 

We show here that the rational number in (\ref{eqmax}) is independent of the chosen transversal projection, and hence intrinsic of the singularity (Main Theorem 1). This defines a rational invariant attached to singular point $x$, denoted here by $v-ord^{(d-1)}(x)$. 

If we fix two $n$-fold points $x$ and $y$, so that $x\in\overline{y}$, then it will be shown that $v-ord^{(d-1)}(x)\geq v-ord^{(d-1)}(y)$ (despite this property, the function is not upper semi-continuous). This inequality will be used in the proof of the two main objectives (1) and (2) in \ref{pi14}.

This invariant attached to the singularity has been largely studied in positive characteristic for the particular case of equations of the form $ f_{p^{e}}(z)=z^{p^{e}}+a_{p^{e}}\in\calo_{V^{(d-1)}}[z]$ (the purely inseparable case), e.g. \cite {Co} ,\cite{HaWa}, \cite{HaKang}, \cite{Moh96}. This equation involves a particular transversal projection, say $V^{(d)}\overset{\beta}{\longrightarrow}V^{(d-1)}$. Note that pure inseparability fails to hold if the projection is changed (pure inseparability is not a property of the singularities of a hypersurface). Our result shows that the rational number in (\ref{eqmax}), usually called the \emph{slope of the singularity}, is independent of the projection and hence intrinsic of the singularity.

%
%
%
\end{parrafo}

\begin{parrafo}{\large{\bf Organization and further comments}}.

\vspace{0.3cm}

\noindent {\sf Part I: $p$-presentations, adaptations and the tight monomial algebra}.

\vspace{0.2cm}

The objective of this first part is the definition of the inductive function and the study of its main properties mentioned in \ref{pi14}. This leads to the two main Theorems stated in the last Section \ref{secMT},  we suggest a first look at this last section for an overall view of the preliminary results that are needed. 

This first part is developed so as to introduce gradually the inductive function in positive characteristic, and to pave the way to the study of the strong monomial case in Part II. This part has been organized so as to present only those technical aspects which are crucial in the first two parts, whereas other technical arguments are gathered in Part III.

Section \ref{sec111} encompasses several notions used throughout the paper, such as Rees algebras and Rees algebras endowed with a suitable compatibility with differential operators. This will lead us to the definition of simple differential algebras, which will be essential for the definition of our invariants. 

The study of $n$-fold points of the hypersurface $X=V(f)$ is reformulated here in terms of the Rees algebra $\calo_{V^{(d)}}[fW^n]$. This is our first example of {\em simple} algebra. Attached to this Rees algebra is a well-defined \emph{differential algebra}.

Simple algebras which are differential will lead us naturally to the study of monic polynomials (\ref{eqintrof2}), where now $n=p^e$ is a power of the characteristic.

We also discuss here the notion of \emph{elimination algebras}. These are defined in terms of  differential algebras and transversal projections. Elimination algebras will play a central role in the definition of invariants. A first step in this direction will be given by our notion of \emph{$p$-presentation} in Definition \ref{pr_locp}.
 
We shall make use of a fundamental property of stability of transversality with monoidal transformations: To fix ideas set $X=\{f=0\}\subset V^{(d)}$ and a transversal projection $V^{(d)}\overset{\beta}{\longrightarrow}V^{(d-1)}$ as in (\ref{eqintrof2}). Consider now an arbitrary sequence of monoidal transformations
\begin{equation}\label{seqX}
\xymatrix@R=0pc@C=0pc{
X & & & & & X_1 &  & & & &  &  & & & &   X_r\\
V^{(d)}  &  & & & &   V_{1}^{(d)}\ar[lllll]_{\pi_{C_0}}   & & & & & \dots \ar[lllll]_{\pi_{C_1}} &  & & & &   V_{r}^{(d)}\ar[lllll]_{\pi_{C_{r-1}}}  \\
}
\end{equation}
where each  $X_{i+1}$ denotes the strict transform of $X_{i}$ and each $\pi_{C_{i}}$ is a monoidal transformation with center $C_{i-1}$ included in the $n$-fold points of $X_{i}$. The stability property of transversality is that (\ref{seqX}) induces a sequence 
\begin{equation}\label{seq2}
\xymatrix@R=0pc@C=0pc{
V^{(d-1)}  &  & & & &   V_{1}^{(d-1)}\ar[lllll]   & & & & & \dots \ar[lllll] &  & & & &   V_{r}^{(d-1)}\ar[lllll]}
\end{equation} 
together with projections $V_i^{(d)}\overset{\beta_i}{\longrightarrow}V_i^{(d-1)}$ which are transversal to $X_i$ along the $n$-fold points (the $\beta_i$ are defined in an open neighborhood of the $n$-fold points of $X_i$ in $V_i^{(d)}$). 

This will lead us to some form of \emph{transformations} of the monic polynomial in (\ref{eqintrof2}):
\begin{equation}\label{eqintrofi}
f^{(i)}(z_i)=z_i^n+a^{(i)}_1z_i^{n-1}+\dots+a^{(i)}_n\in\calo_{V_i^{(d-1)}}[z_i].
\end{equation}

The polynomials in (\ref{eqintrofi}) are not the strict transform of the first expression in (\ref{eqintrof2}). Changes of the transversal parameter $z_i$ would be required in the definition of each expression.

In Section \ref{sec222} sequences as (\ref{seqX})  are expressed as transformations of Rees algebras. In this context each transversal projection $\beta_i$ will define an elimination algebra on $V_i^{(d-1)}$. In this section, we also discuss a form of compatibility of elimination with monoidal transformations. This, in turn, will  lead to Theorem \ref{thm:BV} in which monomial algebras appear in a natural manner (\cite{BV3}). 

One of the objectives of this first Part is to assign a monomial algebra, say $\calo_{V_r^{(d)}}[I(H_1)^{h_1}\dots I(H_r)^{h_r}W^s]$  (see \ref{pi14} (1)),   to a sequence of transformations of $X$ as (\ref{seqX}). This sequence will be formulated here as a sequence of transformations of Rees algebras. This monomial algebra, assigned to (\ref{seqX}), will relate to the coefficients of $f^{(r)}(z_r)=z_r^n+a^{(r)}_1z_r^{n-1}+\dots+a^{(r)}_n\in\calo_{V_r^{(d-1)}}[z_r]$. In fact, we show that such expression can be chosen so each coefficient $a_i^{(r)}$ is divisible, in some weighted manner, by this monomial algebra (see Definition \ref{defmon}).

A first step in the definition of our inductive function $v-ord^{(d-1)}$ is addressed in Section \ref{sec333}, where a rational number is assigned to a $p$-presentation (slope at a point). A  notion of \emph{well-adapted} $p$-presentation at a point is introduced in Section \ref{sec555}.
It will be ultimately shown, in a further section, that the slope of $p$-presentations which are well-adapted at a point $x$, is $v-ord^{(d-1)}(x)$ (the value of the inductive function at $x$). This highlights the importance of this notion in what follows. 

Both Sections \ref{sec333} and  \ref{sec555} are focused in giving, in an explicit manner, the value of the Main Inductive function at a singular point. 

In Section \ref{sec666}, monoidal transforms of $p$-presentations are defined. This leads to the statement of the  two main results of this first Part: Main Theorems 1 and 2,  stated in Section \ref{secMT}.
Main Theorem 1 (Theorem \ref{result43}) asserts that the previously defined inductive function is independent of the chosen smooth projection $\beta$. Main Theorem 2 characterizes the monomial algebra, called here $\m_rW^s$, defined by the inductive functions. Proofs will be address in  Part \ref{partf}.
%


\vspace{0.25cm}

\noindent{\sf Part II: Strong monomial case}.

In Part I we have defined the inductive functions, $v-ord^{(d-1)}$, and a monomial algebra, say $\m_rW^s$, has been assigned to a sequence of transformations (\ref{seqX}). It can be shown that for any $n$-fold point
$$v-ord^{(d-1)}(x)\geq \ord(\m_rW^s)(x),$$
where the right hand side is a function defined for an arbitrary algebra (see (\ref{dford})). The function $\ord(\m_rW^s)$ is a nicely behaved upper semi-continuous function as opposed to the function in the left hand side.
The previous inequality between the previous functions will lead us to the numerical characterization of the strong monomial case,  expressed by the condition 
 $$v-ord^{(d-1)}(x)=\ord(\m_r W^s)(x),$$
for any $n$-fold point $x$.

It is proved in Theorem \ref{claim:gamma:res} that if such equality holds, then a combinatorial resolution of $\m_rW^s$ can be lifted to a simplification of the $n$-fold points. This settles \ref{pi14} (2).
%
%
%
\end{parrafo}

\begin{parrafo}
 
\vskip 3mm
\noindent{\bf Final comments.} The invariants studied in this paper make use of transversal projections $V^{(d)}\longrightarrow V^{(d-1)}$ and of elimination algebras defined in $V^{(d-1)}$. There are other approaches in the definition of invariants along $n$-fold points of a hypersurface.  The bibliography indicates some, but certainly not all the effort done in this way. An account on the problem, due to Hauser, appears in \cite{HauP}. There is an alternative approach of W\l odarczyk; his presentation in \cite{Wlo2} includes an important study of pathologies in positive characteristic. There are also recent contributions by Kawanoue-Matsuki (\cite{Kaw}, \cite{KM}), Hironaka (\cite{Hironaka06}), Cutkosky (\cite{Cut3}), and a fundamental contribution of Cossart-Jannsen-Saito in \cite{CJS} which proves embedded resolution for $2$-dimensional arithmetical schemes. We have profited from discussions with Encinas, Cossart, Hauser, Kawanoue, Lipman, Matsuki, Piltant  and from ideas of Ana Bravo which will be treated elsewhere.
\end{parrafo}

\begin{part}{Inductive functions and the tight monomial algebra.}

\section{Differential algebras, elimination and local presentations.}\label{sec111}

\begin{parrafo}
The initial motivation is the study of the highest multiplicity locus of an embedded hypersurface $X$. Here we begin in \ref{par1} by showing how to reformulate this study in terms of algebras. This reformulation will enable us to consider algebras with more structure. In fact, algebras with a form of compatibility with differential operators are studied in \ref{pq}, \ref{kr22} and \ref{rp23}, where the notion of \emph{absolute} and \emph{relative differential algebras} are discussed. 

It is in the context of differential algebras in which the fundamental notions of transversal projections and elimination algebras will be introduced (see \ref{nlsd} and \ref{csc}, respectively). 

The main objective of this section is to show that given a differential algebra, together with a transversal projection, the algebra can be entirely reconstructed in terms of two ingredients: 
\begin{enumerate}
\item the elimination algebra, and
\item a monic polynomial.
\end{enumerate}
This is the main result in this section, which is collected in Proposition \ref{pr_local}. This form of presentation of the algebra will be essential throughout this work. In the case of characteristic zero the monic polynomial can be chosen of degree one. In the case of positive characteristic one can choose the monic polynomial so as to have as degree a power of the characteristic. This will lead to the definition of $p$-presentations in Definition \ref{pr_locp}.

The particular feature of positive characteristic is played by the coefficients of this monic polynomial as will be shown in this development. The definition of the main invariant will rely entirely on these two ingredients.

\end{parrafo}

\begin{parrafo}\label{par1}{\bf Rees algebras and the resolution problem}. Here we introduce the notion of Rees algebras which will play a prominent role in our development.
Let $V^{(d)}$ be a smooth scheme over a perfect field $k$ of dimension $d$. The problem of resolution of singularities of a singular scheme embedded in $V^{(d)}$ can be stated in terms of Rees algebras over $V^{(d)}$. These are algebras of the form $\G=\bigoplus_{n\in\nat} I_nW^n$, where $I_0=\calo_{V^{(d)}}$ and each $I_n$ is a coherent sheaf of ideals. 
Here $W$ stands for a dummy variable introduced simply to keep track of the degree. It will be assumed that, locally at any point of $V$, $\G$ is a finitely generated $\calo_{V^{(d)}}$-algebra.

 A non-zero sheaf of ideals
 $J\subset \calo_{V^{(d)}}$  defines an upper-semi-continuous function $\nu(J): V^{(d)}\longrightarrow \mathbb Z,$ where 
 $\nu_x(J)$ denotes the order of the stalk $J_x$ at the local regular ring $(\calo_{V^{(d)},x}, m_x)$.
Recall that the order of $J$ in $\calo_{V^{(d)},x}$ is the highest integer $n$ so that $J_x\subset m_x^n$. The \emph{singular locus} of  $\G$ is the closed set
\begin{equation}\label{dfsing}
\Sing(\G)=\{x\in V^{(d)}\ |\ \nu_x(I_n)\geq n\hbox{ for each }n\in\nat\}.
\end{equation}
In the setting of \ref{pi12} in which $X=V(f)$, we will first attach to $X$ the algebra $\G=\calo_{V^{(d)}}[fW^n]$. The set $\Sing(\G)$ consists of the points of multiplicity $n$ of the hypersurface $X=V(f)$.

A monoidal transformation $V^{(d)}\overset{\pi_C}{\longleftarrow}V_1^{(d)}$ along the closed smooth center $C\subset\Sing(\G)$, 
defines a new Rees algebra, $\G_1=\bigoplus_{n\in\nat}I_n^{(1)}W^n$, called the \emph{transform of $\G$}. The transformation is denoted by
\vspace{-.1cm}
$$
\xymatrix@R=0pc@C=0pc{
\G & & & & & \G_1\\
V^{(d)}  &  & & & &   V_{1}^{(d)}\ar[lllll]_{\pi_C}
}$$

A sequence of transformations will be denoted by:
\vspace{-.1cm}
\begin{equation}\label{seqintro1}
\xymatrix@R=0pc@C=0pc{
\G & & & & & \G_1 &  & & & &  &  & & & &   \G_r\\
V^{(d)}  &  & & & &   V_{1}^{(d)}\ar[lllll]_{\pi_{C_1}}   & & & & & \dots \ar[lllll]_{\pi_{C_2}} &  & & & &   V_{r}^{(d)}\ar[lllll]_{\pi_{C_r}}  \\
}
\end{equation}
and herein we always assume that the exceptional locus of the composite morphism $V^{(d)}  \longleftarrow V^{(d)}_r$ is a union of hypersurfaces with only normal crossings.

The sequence (\ref{seqintro1}) is said to be a \emph{resolution of} $\G$ if $\Sing(\G_r)=\emptyset$. For $\G=\calo_{V^{(d)}}[fW^n]$, a resolution (\ref{seqintro1}) defines a simplification of $n$-fold points as in (\ref{opq})

A Rees algebra $\G$ is said to be \emph{simple at $x\in\Sing(\G)$} if there is an index $n\in\nat$ so that $\nu_x(I_n)=n$.  It is said to be \emph{simple} if this condition holds for any $x\in\Sing(\G)$. Such is the case for $\G=\calo_{V^{(d)}}[f_nW^n]$, when $f_n$ defines a hypersurface, say $X$, of maximum multiplicity $n$.
\end{parrafo}

\begin{parrafo}\label{pq}
Here $\beta:V^{(d)}\longrightarrow V^{(d-1)}$  will denote a smooth morphism of relative dimension one, from smooth schemes of dimensions $d$ and $d-1$, respectively. Throughout this paper these morphisms will be called \emph{projections}. Locally at a point $x\in V^{(d)}$, $V^{(d)}$
is \'etale over  $V^{(d-1)}\times \mathbb{A}^1$ (where $\mathbb{A}^1$ denotes the affine line), and such map is compatible with the projection on $V^{(d-1)}$ (\cite{AlKe}, p. 128). Consequently, the local ring $\calo_{V^{(d)},x}$ is \'etale over a localization of a polynomial ring in one variable, say $\calo_{V^{(d-1)},\beta(x)}[Z]$. After  restriction to a neighborhood of $x$, $Z$ gives rise to a global function at $V^{(d)}$, say $z$. So there is an inclusion $\calo_{V^{(d-1)}} [z]\subset \calo_{V^{(d)}} $, where $z \in \Gamma(\calo_{V^{(d)}}, V^{(d)}) $, and  the closed set $\{z=0\}$ is a section of $\beta:V^{(d)}\longrightarrow V^{(d-1)}$.

Given a ring $S[Z]$, a morphism of $S$-algebras, say $Tay: S[Z]\longrightarrow S[Z, T]$,
is defined by setting $Tay(Z)=Z+T$ (Taylor expansion). Here
$$Tay(f(Z))= f(Z+T)= \sum \Delta^{(r)}(f(Z))T^r,$$
for some operator $ \Delta^{(r)}\!: S[Z] \longrightarrow S[Z]$ defined from this morphism. It is well known that $\{\Delta^{(0)}, \Delta^{(1)}, \dots,\Delta^{(r)}\}$ is a basis of the free module of $S$-differential operators of order $r$. The same applies here for $\calo_{V^{(d-1)}} [z]$ if we assume that $\{dz\}$ is a basis of $\Omega_\beta^1$($=\Omega^1(\calo_{V^{(d)}}\ | \ \calo_{V^{(d-1)}})$). Namely,  $\{\Delta^{(0)}, \Delta^{(1)}, \dots,\Delta^{(r)}\}$ spans the sheaf of differential operators of order $r$ relative to the smooth morphism $\beta:V^{(d)}\longrightarrow V^{(d-1)}$.

Throughout this paper, we will slightly abuse the notation, here $\beta:V^{(d)}\longrightarrow V^{(d-1)}$ is called a {\em local projection} , and the function $z$ is said to be a 
{\em section} of $\beta$.

Let ${\mathcal G}=\bigoplus_{n\geq 0}I_nW^n$ be a Rees algebra on
a $d$-dimensional smooth scheme $V^{(d)}$. 
We always assume that $I_0=\calo_{V^{(d)}} $ and that ${\mathcal G}$ is locally a finite generated $\calo_{V^{(d)}} $-algebra. Namely that
$${\mathcal G}=\calo_{V^{(d)}}[f_{n_1}W^{n_1}, \dots ,f_{n_s}W^{n_s} ](\subset  \calo_{V^{(d)}}[W])$$ locally at any point of $V^{(d)}$.

Given two such algebras $\G_1$ and $\G_2$,  $\G_1\odot \G_2$ will denote the smallest algebra containing $\G_1$ and $\G_2$. In terms of local generators, if $\{f_1W^{n_1},\dots,f_rW^{n_r}\}$ generates $\G_1$ and $\{g_1W^{m_1},\dots,g_sW^{m_s}\}$ generates $\G_2$, then $\G_1\odot \G_2$ is generated by $\{f_1W^{n_1},\dots,f_rW^{n_r},g_1W^{m_1},\dots,g_sW^{m_s}\}$.

A function $\xymatrix@R=0pc @C=0.4pc{
\ord(\G)(-): V^{(d)}\ar[rr] & & \mathbb Q}$ is defined
\begin{equation}\label{dford} 
\ord(\G)(x)= \min_{n\geq0} \Big\{ \frac{\nu_x(I_n)}{n}  \Big\}
\end{equation} where $\nu_x$ denotes the order at the local regular ring 
$\calo_{V^{(d)},x}$. 
It takes only finitely many values. Note that the singular locus is
$\Sing(\G)=\{ x\in V^{(d)}\ |\ \ord(\G)(x)\geq 1  \}.$
\begin{remark}\label{kr22} It is a general fact that objects treated by resolution techniques are gathered in equivalence classes. Such is the case, for instance, with Log-resolutions of ideals on smooth schemes. If two ideals have the same integral closure, they undergo the same Log-resolution; so ideals are considered up to integral closure.
A similar situation applies here, where the objects are algebras: two algebras with the same integral closure will not be distinguishable. For instance, if $\G$ and $\G'$ are two algebras on  $V^{(d)}$ with the same integral closure, then they define the same functions $\ord(\G)=\ord(\G')$ (in particular, $\Sing(\G)=\Sing(\G')$, Proposition 4.4, \cite{VV3}). The reader should keep aware of this fact, as it also affects the notation. The expression $fW^t \in {\mathcal G}=\bigoplus_{n\geq 0}I_nW^n$ 
means that $f^r\in I_{t\cdot r}$ for some positive integer $r$.

A Rees algebra can be defined by fixing an ideal $I$ and a positive integer $s$, say $\calo_V[IW^s](\subset \calo_V[W])$, which  we denote simply as $IW^s$. Moreover, up to integral closure, any Rees algebra is of this kind (Remark 1.3 \cite{EncVil06}). In this case,  $fW^t \in \calo_V[IW^s]$ means that $f^s\in I^{t}$.
\end{remark}
\begin{parrafo}\label{rp23}
An algebra  ${\mathcal G}=\bigoplus_{n\geq 0}I_nW^n$ over  $V^{(d)}$ is said to be a {\em differential algebra} if $D_{r}(I_n)\subset I_{n-r}$ for any $r< n$ and for any differential operator $D_r$  of order $r$, whenever we restrict to an affine open subset of $V^{(d)}$.

$\G$ is said to be an \emph{absolute differential algebra}, if this property holds for all $k$-linear differential operators. When a smooth projection 
 $V^{(d)}\overset{\beta}{\longrightarrow}  V^{(d-1)}$ is fixed and the previous property holds 
for differential operators which are $\calo_{ V^{(d-1)}}$-linear, or say, 
$\beta$-relative operators, then $\G$ is said to be a \emph{$\beta$-relative differential algebra}, or simply \emph{$\beta$-differential}.

If $\G$ is an absolute differential algebra, then it is also a $\beta$-relative differential algebra {\em for any} smooth morphism $V^{(d)}\overset{\beta}{\longrightarrow} V^{(d-1)}$ defined over $k$. The $\beta$-relative structure has an advantage:
The transform of an absolute differential algebra is not absolute differential, but the notion of $\beta$-differential algebra will turn out to be well suited with transformations.

If $\G$ is not a differential algebra, then it has a natural extension to 
a differential algebra (Theorem 3.4, \cite{VV3}). The same holds if $\G$ is not a 
$\beta$-differential algebra. These natural extensions are compatible with integral closure: if $\G_1$ and $\G_2$ have the same integral closure, the same holds for their extensions to differential algebras or to $\beta$-differential algebras (Theorem 6.14, \cite{VV3}).
\end{parrafo}
\begin{remark}
When $\G$ is a $\beta$-differential Rees algebra, then, locally, there is a finite set of elements of $\G$, say $\{f_{n_1}W^{n_1},\dots,f_{n_s}W^{n_s}\}$, so that 
$$\G=\calo_{V^{(d)}}[f_{n_i}W^{n_i},\Delta^{(\alpha_i)}(f_i)W^{n_i-\alpha_i}]_{1\leq\alpha_i\leq n_i-1,\ 1\leq i\leq s},$$
with $\Delta^{(\alpha_i)}$ as in \ref{pq}. Conversely, these local presentations characterize $\beta$-differential algebras (Theorem 2.9, \cite{VV3}).
\end{remark}

\end{parrafo}
%
%

\begin{parrafo}\label{nlsd}{\bf Transversal projections}. 
 The graded algebra of the  maximal ideal $m_x$ of a point $x\in V^{(d)}$,
say $\mbox{Gr}_{x}({\mathcal O}_{V^{(d)},x})$,  is isomorphic to a
polynomial ring. When $x$ is a closed point, it is a polynomial ring in $d$-variables, which is the coordinate ring
associated to the tangent space of $V^{(d)}$ at $x$, namely
$\mbox{Spec}(\mbox{Gr}_{x}({\mathcal O}_{V^{(d)},x}))={\mathbb
T}_{V^{(d)},x}$. 
The {\em initial ideal} or {\em tangent ideal} of
${\mathcal G}$ at $x\in \mbox{Sing }{\mathcal G}$, say ${\mbox{In}}_{x}({\mathcal G})$, is the
ideal of $\mbox{Gr}_{x}({\mathcal O}_{V^{(d)},x})$ generated by the
elements $\mbox{In}_x(I_n)$ for all $n\geq 1$, where $\In_x(I_n)$ is the class of $I_n$ at $m_x^n/m_x^{n+1}$. Observe that
${\mbox{In}}_{x}({\mathcal G})$ is zero unless $\mbox{ord}({\mathcal G})(x)=1$.  The zero set of the tangent ideal $\In_x(\G)$ in $\mbox{Spec
}(\mbox{Gr}_{x}({\mathcal O}_{V^{(d)},x}))$ is the {\em tangent cone} of
${\mathcal G}$ at $x$, denoted by ${\mathcal C}_{{\mathcal G},x}$.

Given a vector space $\mathbb{V}$, a vector $v\in \mathbb{V}$ defines a translation, say $tr_v(w)=w+v$ for $w\in\mathbb{V}$. There is a largest linear subspace, denoted by $\L_{\G,x}$, so that $\C_{\G,x}$ is invariant under translations of $\L_{\G,x}$, that is, $tr_v(\C_f)=\C_f$ for any $v\in\L_f$. This subspace $\L_{\G,x}$ is called the \emph{ linear space of vertices}.
\end{parrafo}

\begin{definition}(Hironaka's $\tau$-invariant). 
$\tau_{{\mathcal G},x}$
will denote the minimum number of variables required to express generators of the tangent ideal $\In_x(\G)$. This algebraic definition can be reformulated geometrically: $\tau_{\G,x}$ is the codimension of the linear subspace $\L_{\G,x}$ in $\mathbb{T}_{V^{(d)},x}$.
\end{definition}

\begin{parrafo}

Fix now a closed point $x\in V^{(d)}$. Let $V^{(d)}\overset{\beta}{\longrightarrow} V^{(d-1)}$ be smooth and set 
$\beta(x)=y\in V^{(d-1)}$. A regular system of parameters $\{ y_1, \dots , y_{s}\}$ in $\calo_{V^{(d-1)},y}$,   extends to $\{ y_1, \dots , y_{s},z\}$, a regular system of parameters in $\calo_{V^{(d)},x}$. Here $x$ is a point of
$\beta^{-1}(y)$, and the tangent space of this subscheme at $x$, say 
${\mathbb
T}_{\beta^{-1}(y),x}$, is identified with the subscheme in ${\mathbb
T}_{V^{(d)},x}$ defined by the linear forms 
$\langle {\mbox{In}}_{x}(y_1), \dots, 
{\mbox{In}}_{x}(y_{s}) \rangle \subset \mbox{Gr}_{x}({\mathcal O}_{V^{(d)},x})$ 
(i.e., a one dimensional subspace in   $\mathbb{T}_{V^{(d)},x}$).

\begin{definition} \label{trpt} 
A local projection $\beta:V^{(d)}\longrightarrow V^{(d-1)}$ is said to be {\em transversal} to 
$\G$ at  $x\in
\mbox{Sing }({\mathcal G})$  if ${\mathcal C}_{{\mathcal G},x}\cap {\mathbb
T}_{\beta^{-1}(y),x}= \mathds{O}$, the origin of ${\mathbb
T}_{V^{(d)},x}$. 
The local projection is said to be 
{\em transversal} to 
$\G$ if it is so at any point of $
\mbox{Sing }{\mathcal G}$. 
Transversality is an open condition so we are led to consider this condition only at closed points (see Remark 8.5 in \cite{BV3}).
\end{definition}

\end{parrafo}

\begin{parrafo}\label{csc}{\bf Elimination algebras}.
Set a local projection $\beta:V^{(d)}\longrightarrow V^{(d-1)}$. Let $x\in \Sing(\G)$ be a closed point in $V^{(d)}$,  so $y=\beta(x)$ is closed in $ V^{(d-1)} $. A regular system of parameters $\{ y_1, \dots , y_{d-1}\}\subset\calo_{V^{(d-1)},y}$ extends to  a regular system of parameters $\{ y_1, \dots , y_{d-1},z\}$ in $\calo_{V^{(d)},x}$. In this case, $z$ defines a section of $\beta:V^{(d)}\longrightarrow V^{(d-1)}$ after suitable restrictions.

Take $\G$ to be a simple algebra, and let 
$\beta:  V^{(d)}\longrightarrow  V^{(d-1)}$ be transversal to $\G$.
 Fix a closed point $x \in \Sing(\G)$. The Weierstrass Preparation Theorem  ensures that, taking restrictions in \'etale topology, $\G$ has the same integral closure as an algebra $\calo_{V^{(d)}}[f_1(z)W^{n_1}, \dots ,f_s(z)W^{n_s} ]$, where each
 \begin{equation}\label{qeq22}
f_i(z)=z^{n_i}+a^{(i)}_{n_i-1}z^{n_i-1}+\cdots +a^{(i)}_0\in \calo_{V^{(d-1)}}[z]
\end{equation}
is a monic polynomial of degree $n_i\in\mathbb{Z}_{\geq0}$ (see 4.7 in \cite{VV4}). 

The following properties are known to hold within this setting:

{\bf P0)} the restriction of $\beta$ to $\Sing(\G)$, say $\beta:\Sing(\G)\longrightarrow \beta(\Sing(\G))$, is a set theoretical bijection and two corresponding points have the same residue fields. Namely, $k(x)\cong k(\beta(x))$ (1.15 and Theorem 4.11  \cite{VV4}, or 7.1 \cite{BV3}).

If $\G$ is a $\beta$-relative differential algebra, then a Rees algebra on $V^{(d-1)}$, say
$\R_{\G,\beta}\subset \calo_{V^{(d-1)}}[W]$, is defined. It is called the {\em  elimination algebra of $\G$}, and has the following properties:

{\bf P1)}  $\beta(\Sing(\G)) \subset \Sing(\R_{\G,\beta})$; moreover if $C$ is a closed and smooth scheme included in $\Sing(\G)$, then 
$\beta(C) (\subset V^{(d-1)})$ is smooth, isomorphic to $C$, and 
$\beta(C) \subset \Sing(\R_{\G,\beta})$  (Theorem 9.1 \cite{BV3}).

{\bf P2)} (Theorem 5.5, \cite{VV4})
Fix two projections: 
$$\xymatrix@R=0pc@C=0pc{
& & \G\\
& & V^{(d)}\ar[llddddd]_{\beta\!\!}\ar[rrddddd]^{\!\!\beta'}\\
\\
\\
\\
\\
 V^{(d-1)} & & & & V'^{(d-1)}\\
\R_{\G,\beta}  & & & & \R_{\G,\beta'}
}$$
where both $\beta$ and $\beta'$ are transversal to $\G$. This defines an algebra $\R_{\G,\beta}$ over $V^{(d-1)}$ and an algebra $\R_{\G,\beta'}$ over $V'^{(d-1)}$. At any point $x\in\Sing(\G)$,
$$\ord(\R_{\G,\beta})(\beta(x))=\ord(\R_{\G,\beta'})(\beta'(x)). $$

{\bf P3)} (1.15 \cite{VV4}) If $\ord(\R_{\G,\beta})(y)>0$ at a point $y\in V^{(d-1)}$, the restriction of (\ref{qeq22}) 
to the fiber over $y$ (to $\beta^{-1}(y)$), say 
$$\overline{f_i}(Z)=Z^{n_i}+\overline{a}^{(i)}_{n_i-1}Z^{p^e-1}+\dots+\overline{a}^{(i)}_{0}\in k(y)[Z];$$ 
 is a power of a purely inseparable polynomial. Namely, 
 $\overline{f_i}(Z)=(Z^{p^{r_i}}+ b_i)^{m_i}$ at $k(y)[Z]$.
 Moreover, there is at most one point $ x\in V^{(d)}$ so that 
 $\beta(x)=y$ and $\ord(\G)(x)>0$.

A particular feature of characteristic zero is that $z$ can be chosen to be of maximal contact. This is not always the case in positive characteristic, and the relative differential structure will partially fill in this gap. The previous formulation, in which the algebra is generated by monic polynomials, holds locally. In this local description,  $z$ is a section of $\beta:V^{(d)}\longrightarrow V^{(d-1)}$ as defined in \ref{pq}.
\end{parrafo}

\begin{proposition}\label{pr_local}{\bf (Local presentation)} Set  $x\in\Sing(\G)$ a closed point and  $V^{(d)}\overset{\beta}{\longrightarrow} V^{(d-1)}$ transversal to $\G$ at $x$. Assume that $\G$ is a $\beta$-relative differential algebra, that there is an element $f_nW^n\in\G$, $f_n$ of order $n$ at  $\calo_{V^{(d)},x}$, and that $f_n=f_n(z)$ is a monic polynomial of degree $n$ in $\calo_{V^{(d-1)},\beta(x)}[z]$, where $z$ is a $\beta$-section and an element at  $\calo_{V^{(d)},x}$.  Then, at a neighborhood of $x$, $\G$ has the same integral closure as
\begin{equation}\label{eqlocalpresen}
\calo_{V^{(d)}}[f_n(z)W^n,\Delta^{(\alpha)}(f_n(z))W^{n-\alpha}]_{1\leq \alpha\leq n-1}\odot\R_{\G,\beta},
\end{equation}
where $\R_{\G,\beta}$ is identified with $\beta^*(\R_{\G,\beta})$, and $\Delta^{(\alpha)}$ are as in {\rm \ref{pq}}.
Moreover, $\R_{\G,\beta}$ is non-zero whenever $\Sing(\G)$ is not  of co-dimension one locally at $x$.
\end{proposition}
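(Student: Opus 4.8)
The plan is to show that the algebra $\calo_{V^{(d)}}[f_n(z)W^n,\Delta^{(\alpha)}(f_n(z))W^{n-\alpha}]_{1\le\alpha\le n-1}\odot\R_{\G,\beta}$, which I will call $\G'$, has the same integral closure as $\G$ near $x$. The inclusion $\G'\subseteq\G$ (up to integral closure) is the easy direction: the element $f_nW^n$ lies in $\G$ by hypothesis, and since $\G$ is assumed $\beta$-differential, $\Delta^{(\alpha)}(f_n)W^{n-\alpha}\in\G$ for every $\alpha$ (here the $\Delta^{(\alpha)}$ are the $\beta$-relative operators of \ref{pq}, which are exactly the operators preserved by a $\beta$-differential algebra). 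Finally $\R_{\G,\beta}$, viewed via $\beta^*$ as an algebra on $V^{(d)}$, is contained in $\G$ up to integral closure by the defining property of the elimination algebra. Hence $\G'\subseteq\G$ and, taking the differential extension if necessary, $\G'$ has integral closure inside that of $\G$. First I would record these containments carefully and note the role of the $\beta$-differential hypothesis.

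For the reverse inclusion I would argue locally on the completion $\widehat{\calo}_{V^{(d)},x}=\widehat{\calo}_{V^{(d-1)},y}[[z]]$, where $y=\beta(x)$ and $z$ is the chosen transversal section. By the Weierstrass-type presentation recalled in \ref{csc}, after an étale restriction $\G$ has the same integral closure as an algebra generated by monic polynomials $f_i(z)W^{n_i}$ in $\calo_{V^{(d-1)}}[z]$, together with the elements it is forced to contain by the $\beta$-differential structure. The heart of the matter is the standard elimination step (Tschirnhausen-type manipulation in the $\beta$-differential world): using $f_n(z)$ of degree exactly $n$ and its relative derivatives $\Delta^{(\alpha)}(f_n)$, one can write each generator $f_i(z)$ of $\G$, modulo the ideal generated in the graded sense by $f_n(z)$ and $\Delta^{(1)}(f_n),\dots,\Delta^{(n-1)}(f_n)$, as a polynomial of degree $<n$ in $z$ whose coefficients belong (up to integral closure and up to the weighting by $W$) to the elimination algebra $\R_{\G,\beta}$. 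This is precisely the content that makes $\R_{\G,\beta}$ the "coefficient data" on $V^{(d-1)}$; I would invoke the construction of $\R_{\G,\beta}$ and its compatibility with the monic presentation (the cited results of \cite{VV4}, \cite{BV3}) rather than rebuild it. Combining, every generator of $\G$ lies in the integral closure of $\G'$, which gives $\G\subseteq\overline{\G'}$ near $x$, and with the first paragraph $\overline{\G}=\overline{\G'}$.

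The main obstacle I anticipate is the elimination step itself: making precise, in the graded/$W$-weighted setting and only up to integral closure, that the "coefficients left after division by $f_n$ and its $\Delta^{(\alpha)}$" are exactly generators of $\R_{\G,\beta}$ and carry the correct weights. One must be careful that the division process is the one compatible with the relative differential structure (so that the remainders' coefficients are genuinely the elimination-algebra generators, not merely some auxiliary ideal), and that passing to the differential extension does not enlarge things beyond $\overline{\G}$ — this is where Theorem 6.14 of \cite{VV3} on compatibility of differential extensions with integral closure is used. The final sentence, that $\R_{\G,\beta}\ne 0$ unless $\Sing(\G)$ is locally a hypersurface, I would handle separately: if $\R_{\G,\beta}=0$ then by the local presentation $\G$ has the same integral closure as $\calo_{V^{(d)}}[f_n(z)W^n,\Delta^{(\alpha)}(f_n(z))W^{n-\alpha}]$, whose singular locus is $\{z=0\}\cap\Sing$ of the eliminated data; by property {\bf P0} and the bijectivity of $\beta$ on $\Sing(\G)$ this forces $\Sing(\G)$ to coincide, near $x$, with the section $\{z=0\}$, i.e.\ to be of codimension one.
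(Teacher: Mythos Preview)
Your approach differs substantially from the paper's, and the central step you flag as ``the main obstacle'' is in fact a genuine gap.

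The paper does not attempt any division or Tschirnhausen argument in the completion. Instead it reduces everything to a \emph{universal} statement about symmetric-group invariants and then specializes. Taking for simplicity two generators $f_n(z)W^n$ and $g_m(z)W^m$ of $\G$, the paper lifts them to the universal monic polynomials $F_n(Z)=\prod_i(Z-Y_i)$ and $G_m(Z)=\prod_j(Z-V_j)$ and lets $\S_n\times\S_m$ act on $S=k[Z-Y_i,\,Z-V_j]$. The invariant ring $S^{\S_n\times\S_m}$ specializes to (the integral closure of) the $\beta$-differential extension of $\G$; the subring $T=k[\Delta^{(\alpha)}(F_n(Z))W^{n-\alpha}]_{0\le\alpha\le n-1}$ specializes to the first factor of your $\G'$; and the invariants of a second subring $S'\subset S$ form the universal elimination algebra, specializing to $\R_{\G,\beta}$. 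The key point is the elementary observation that $S$ is \emph{finite} over both $k[Z-Y_i]$ and $S'$, hence $T\odot R\subset S^{\S_n\times\S_m}$ is a finite extension of graded rings. Since integral extensions survive base change, specialization yields that $\G'\subset\G$ is integral. No division is performed.

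Your plan, by contrast, divides each generator $f_i(z)$ by $f_n$ and its $\Delta^{(\alpha)}$'s and asserts the remaining coefficients land in $\R_{\G,\beta}$ with the correct weights. But this assertion \emph{is} the proposition. The elimination algebra is defined through symmetric functions of the roots (norms, resultants, etc.), not as ``whatever coefficients survive weighted Weierstrass division by $f_n$''; there is no a~priori reason the two agree up to integral closure. When you write that you would ``invoke the construction of $\R_{\G,\beta}$ and its compatibility with the monic presentation'', you are pointing exactly at what must be proved. The paper's universal detour is what supplies that compatibility.

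For the final sentence the paper simply cites Theorem~4.11(i) of \cite{VV4}. Your direct argument is also not correct as stated: from $\R_{\G,\beta}=0$ you conclude $\Sing(\G)$ coincides with the section $\{z=0\}$, but the singular locus of $\calo_{V^{(d)}}[f_nW^n,\Delta^{(\alpha)}(f_n)W^{n-\alpha}]_\alpha$ is the $n$-fold locus of the hypersurface $\{f_n=0\}$, not $\{z=0\}$, and showing that this locus has codimension one near $x$ is again the content of the cited theorem rather than a consequence of {\bf P0}.
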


\begin{proof}
The last assertion follows from Theorem 4.11 i) \cite{VV4}.
Take $f_n(z)W^n\in \{f_{1}W^{n_1},\dots,f_{s}W^{n_s}\}$ as in (\ref{qeq22}). For ease of notation we consider the case $s=2$, i.e., $\G=\calo_{V^{(d)}}[f_n(z)W^n,g_m(z)W^m]$.

We follow here the arguments and notation as in Chapter 1 in 
\cite{VV4}, particularly Prop.1.29.
Rees algebras are endowed with a natural graded structure. Elimination algebras are also Rees algebras. They are defined as a specialization of the so called {\em universal elimination algebras}, which are graded subalgebras in a polynomial ring.

Take variables $Z$, $Y_1,\dots,Y_n$ and $V_1,\dots,V_m$ over a field $k$, and set
$$F_n(Z)=(Z-Y_1)\cdot(Z-Y_2)\dots(Z-Y_n).$$
This is the so called \emph{universal  polynomial of degree $n$}, and  $f_n=f_n(z)$ can be obtain as a specialization of  $F_n(Z)$. Similarly, let
$$G_m(Z)=(Z-V_1)\cdot(Z-V_2)\dots(Z-V_m)$$
be the universal polynomial of degree $m$ which will specialize to $g_m(z)$.

The natural action of the permutation groups $\S_n$ on $k[Y_1,\dots,Y_n]$, and of $\S_m$ on $k[V_1,\dots,V_m]$, induces an action of the product $\S_n\times \S_m$  on $k[Z,Y_1,\dots,Y_n,V_1,\dots,V_m]$ by fixing $Z$. This group also acts on the subring
$$S=k[Z-Y_1,Z-Y_2,\dots,Z-Y_n,Z-V_1,Z-V_2,\dots,Z-V_m].$$
The subring of invariants of $S$, say $S^{\S_n\times\S_m}$, is
$$k[\Delta^{(\alpha)}(F_n(Z)),\Delta^{(\alpha')}(G_m(Z))]_{0\leq \alpha\leq n-1,\ 0\leq \alpha'\leq m-1},$$
where $\Delta^{(\alpha)}(F_n(Z))$ is an homogeneous polynomial of degree $n-\alpha$, obtained as in \ref{pq}. Similarly $\Delta^{(\alpha')}(G_m(Z))$ is homogeneous of degree $m-\alpha'$. 

As this actions are linear, $S^{\S_n\times\S_m}$ inherits the grading of the polynomial ring $k[Z,Y_i,V_j]$. We add a dummy variable $W$ that will simply express the degree of each homogeneous element. Hence, the subring of invariants $S^{\S_n\times\S_m}$ is now
$$k[\Delta^{(\alpha)}(F_n(Z))W^{n-\alpha},\Delta^{(\alpha')}(G_m(Z))W^{m-\alpha'}]_{0\leq \alpha\leq n-1,\ 0\leq \alpha'\leq m-1}.$$

Consider the subring
$$S'=k[(Z-Y_2)-(Z-Y_1),\dots,(Z-Y_n)-(Z-Y_1),(Z-V_1)-(Z-Y_1),\dots,(Z-V_m)-(Z-Y_1)],$$ 
of $S$. Note that $\S_n\times\S_m$ acts on $S'$. The universal elimination algebra is, in this case of $s=2$, defined as the invariant ring $S'\,^{\S_n\times\S_m}$.

The key observation to prove the assertion is that  $S$ is spanned by two subrings: $k[Z-Y_1,\dots,Z-Y_n]$ and $S'$, and $\S_n\times\S_m$ acts on both. 

Recall that the subring of invariants in the first is 
$T=k[F_n(Z))W^n, \Delta^{(\alpha)}(F_n(Z))W^{n-\alpha}]_{1\leq \alpha\leq n-1}$,
and the one of the second is the universal elimination algebra, say $R(\subset S')$.  

Thus both invariant algebras, $T$ and $R$, are included in $S^{\S_n\times\S_m}$. Let $T\odot R$ denote the smallest algebra containing both rings.  We now claim that $T\odot R\subset S^{\S_n\times S_m}$ is a finite extension of graded subalgebras of $S$.
In order to prove this last assertion note that $S$ is a finite extension of both subalgebras. 

The statement follows now from the previous observation. In fact, $\G$ and (\ref{eqlocalpresen}) are obtained by specialization of the previous subrings. This specialization preserves the grading. On the other hand, integral extension of rings are preserved by specialization (change of base rings).
\end{proof}

\begin{remark}
Fix a Rees algebra ${\mathcal G}=\bigoplus_{n\geq 0}I_nW^n$. If the setting of Proposition \ref{pr_local} holds at a closed point $x\in \Sing(\G)$, then it holds globally after taking suitable restrictions of  $V^{(d-1)}$ to a neighborhood of $\beta(x)$, and of $V^{(d)}$ to a neighborhood of $x$. Moreover, $z$ defines a $\beta$-section.

If the characteristic is zero $I_1$ has order one at $\calo_{V^{(d)},x}$, and $z\in I_1$ can be chosen as an element of order one at this local ring. This is not always the case  in positive characteristic. However, as $\G$ is a simple $\beta$-relative differential algebra, one can check that there is a power of the characteristic, say $p^e$,  so that $I_{p^e}$ has order $p^e$ at $\calo_{V^{(d)},x}$. Therefore the integer $n$ in the last Proposition can be chosen as a power of the characteristic. This integer $p^e$ is defined in terms of $\G$ and the closed point $x\in \Sing(\G)$. This leads to: 
\end{remark}
\begin{definition}\label{pr_locp}({\bf $p$-Presentations}).
Fix, after suitable restriction in \'etale topology, a projection $V^{(d)}\overset{\beta}{\longrightarrow} V^{(d-1)}$ transversal to a simple $\beta$-relative differential Rees algebra $\G$. Assume that $\Sing(\G)$ has no components of co-dimension one.

Assume also that:

\begin{enumerate}
\item[i)] There is a $\beta$-section $z$, a global function on $V^{(d)}$, and $\{dz\}$ is a basis of $\Omega_\beta^1$

\item[ii)] There is an element $f_{p^e}(z)W^{p^e}\in \G$, where $f_{p^e}(z)$ is a monic polynomial of order $p^e$, say
$$f_{p^e}(z)=z^{p^{e}}+a_1z^{p^{e}-1}+\dots+a_{p^{e}}\in \calo_{V^{(d-1)}}[z],$$
where each $a_i$ is a global function on $V^{(d-1)}$.
\item[iii)] The conditions in (\ref{eqlocalpresen}) holds for $\G$  and 
\begin{equation} \label{eqdpl} 
\calo_{V^{(d)}}[f_{p^e}(z)W^{p^{e}},\Delta^{(\alpha)}(f_{p^e}(z))W^{p^{e}-\alpha}]_{1\leq \alpha \leq p^{e}-1}\odot\beta^*(\R_{\G,\beta}).
\end{equation}
That is, $\G$ and (\ref{eqdpl}) have the same integral closure.
\end{enumerate}

In this case, we say that $\beta:V^{(d)}\longrightarrow V^{(d-1)}$, the $\beta$-section $z$,  and $f_{p^e}(z)=z^{p^{e}}+a_1^{}z^{p^{e}-1}+\dots+a_{p^{e}}^{}$ define a \emph{$p$-presentation} of $\G$. These data will be denoted by:
\begin{equation}\label{eqld}
p\P(\beta:V^{(d)}\longrightarrow V^{(d-1)},  z,  f_{p^e}(z)=z^{p^{e}}+a_1^{}z^{p^{e}-1}+\dots+a_{p^{e}}^{}),
\end{equation}
or simply $p\P(\beta,  z,  f_{p^e}(z))$. Clearly (\ref{eqdpl}) is expressed only in terms of $\R_{\G,\beta}$ and $p\P(\beta,  z,  f_{p^e}(z))$.

\end{definition}
\vspace{0.15cm}

 \section{{Monomial algebras and the behavior of elimination under monoidal transformations.}}\label{sec222}

\vspace{0.15cm}
 
\begin{parrafo} The definition of elimination algebras makes use of the notion of the relative differential structure. We now discuss some results that grow from a form of  compatibility of the relative differential structure with monoidal transformations. 

Recall that a {\em sequence of transformations} of  $\G$ is a concatenation of transformations 
\begin{equation}\label{unaseq}
\xymatrix@R=0pc@C=0pc{
\G & & & & & \G_1 &  & & & &  &  & & & &   \G_r\\
V^{(d)}  &  & & & &   V^{(d)}_{1}\ar[lllll]_{\pi_0}   & & & & & \dots \ar[lllll]_{\pi_1} &  & & & &   V^{(d)}_{r}\ar[lllll]_{\pi_{r-1}}  
}
\end{equation}
where we always assume that the exceptional locus of  $V^{(d)}\overset{\pi}{\longleftarrow}V_r^{(d)}$ is a union of hypersurfaces with normal crossings. In the first part of this section we study the compatibility of transversality and elimination algebras with monoidal transformations. Sequences as (\ref{unaseq}) will also give rise to the definition of the so called monomial algebras (Definition \ref{def123}), and to a notion of monomial contact introduced in Definition \ref{defmon}. This notion appears in the formulation of Main Theorem 2.
\end{parrafo}
\begin{parrafo}
Transversal projections are defined for simple algebras. When  $\G$ is a simple algebra, we claim that all the $\G_i$ defined in (\ref{unaseq}) are also simple. It suffices to check this property locally. Fix a closed point $x\in C\subset\Sing(\G)$, where $C$ is a smooth center. There is an integer $n$ and an element $f_n\in I_n$ so that $\nu_x(f_n)=n$. Note that $\nu_C(f_n)=n$ and $f_n$ is equimultiple at $C$ locally at $x$, so the strict transform of $f_n$ has multiplicity at most $n$ on points on the exceptional locus, and hence $\G_1$ is simple.

Take $\G$ to be a simple algebra on $V^{(d)}$, together with a transversal projection $\beta:V^{(d)}\longrightarrow V^{(d-1)}$. Assume that $\G$ is a $\beta$-relative differential algebra. A notion of compatibility of this properties with monoidal transformations can be formulated as follows (\cite{BV3}):

After suitable restrictions to an \'etale cover of  $V^{(d)}$, the sequence (\ref{unaseq}) induces a diagram
\begin{equation}\label{cuadrado}
\xymatrix@R=0pc@C=0pc{
\G & & & & & \G_1 &  & & & &  &  & & & &   \G_r\\
V^{(d)}\ar[ddd]^\beta  &  & & & &   V^{(d)}_{1}\ar[lllll]_{\pi_0}\ar[ddd]^{\beta_1}    & & & & & \dots \ar[lllll]_{\pi_1} &  & & & &   V^{(d)}_{r}\ar[lllll]_{\pi_r}\ar[ddd]^{\beta_{r-1}} \\
\\
\\
V^{(d-1)}  &  & & & &   V^{(d-1)}_{1}\ar[lllll]_{\pi'_0}   & & & & & \dots \ar[lllll]_{\pi'_1} &  & & & &   V^{(d-1)}_{r}\ar[lllll]_{\pi'_{r-1}}\\
\R_{\G,\beta} & & & & & (\R_{\G,\beta})_1 &  & & & &  &  & & & &   (\R_{\G,\beta})_r
}\end{equation}
where:
\begin{enumerate}
\item[(i)] Each vertical morphism $\beta_i:V_i^{(d)}\longrightarrow V^{(d-1)}_i$ is transversal to $\G_i$,  and
each $\G_i$ is a $\beta_i$-differential algebra. 
These $\beta_i$ are defined only in an neighborhood of $\Sing(\G_i)$.
\item[(ii)] The lower sequence induces transformations of the elimination algebra $\R_{\G,\beta}$, and furthermore, each $(\R_{\G,\beta})_i$ is the elimination algebra of $\G_i$ relative to $\beta_i:V_i^{(d)}\longrightarrow V_i^{(d-1)}$, that is, $(\R_{\G,\beta})_i=\R_{\G_i,\beta_i}$ (Theorem 9.1 \cite{BV3}).
\end{enumerate}

\begin{definition}\label{def:rtrans}
A smooth morphism $V^{(d)}_r\overset{\beta_r}{\longrightarrow} V^{(d-1)}_r$ is said to be \emph{$r$-tranversal} to $\G_r$ if there is a transversal morphism $V^{(d)}\overset{\beta}{\longrightarrow}V^{(d-1)}$, as in Definition \ref{trpt}, and a simple $\beta$-differential algebra $\G$ over $\calo_{V^{(d)}}$, so that $\G_r$ and $\beta_r$ arise from a diagram as that in (\ref{cuadrado}).
\end{definition}

\begin{remark}
In characteristic zero, given a simple differential algebra $\G$, there are hypersurfaces of maximal contact at $V^{(d)}$. We fix one such hypersurface, and given a sequence of transformations of $\G$ (\ref{unaseq}), we consider the strict transforms of that fixed hypersurface. 
Here hypersurfaces of maximal contact will be replaced by transversal projections. We shall fix a transversal projection at $V^{(d)}$ and for any sequence (\ref{unaseq}) we will make use of the lifting of this fixed projection in (\ref{cuadrado}).

Local $p$-presentations of $\G_r$ will be defined in terms of $\beta_r$, where $\beta_r$  arises from the fixed smooth transversal morphism $\beta$.
In Section \ref{sec666},  a notion of transformation of $p$-presentations will be defined. This together with the theorems on Section \ref{sec555} will show that given a simple algebra $\G$, if $V^{(d)}$ can be covered by $p$-presentations of the form
$p\P(\beta,  z,  f_{p^e}(z)=z^{p^{e}}+a_1^{}z^{p^{e}-1}+\dots+a_{p^{e}}^{}),$
with the same exponent $p^e$, then the same holds for $\G_r$ at $V_r^{(d)}$. Namely,  that there is a covering of $V_r^{(d)}$ by presentations of the form $p\widetilde{\P}(\beta_r,  \widetilde{z},  \widetilde{f}_{p^e}(\widetilde{z})=\widetilde{z}^{p^{e}}+\widetilde{a}_1\widetilde{z}^{p^{e}-1}+\dots+\widetilde{a}_{p^{e}}),$
where $\beta_r$ is $r$-transversal, with the same exponent $p^e$ on any such $p$-presentation.
\end{remark}

\begin{definition}\label{def123} Let $E=\{H_1,\dots, H_r\}$  be a set of smooth hypersurfaces with normal crossings. A \emph{monomial ideal} supported on $E$ is an invertible sheaf of ideals of the form $\m=I(H_1)^{\alpha_1}\cdot I(H_2)^{\alpha_2}\cdots I(H_r)^{\alpha_r}$, for some integers $\alpha_i\geq0$.

 A \emph{monomial algebra} will be a Rees algebra of the form $\calo_{V}[\m W^s]$ for some positive integer $s$. This algebra will be denoted by $\m W^s$. 
\end{definition}

\begin{remark}\label{rmkintM}
\ \ (1) Fix a monomial algebra $\m W^s=\calo_{V}[\m W^s]$. Locally at a point $x\in V$, $\m_x$ is the ideal spanned by a monomial on a regular system of parameters of $\calo_{V^{(d)},x}$. Recall that Rees algebras are to be considered up to integral closures. Given $f_n\in 
\calo_{V^{(d)},x}$, $f_nW^n\in \m W^s$ if and only if $f_n^s$ is divisible by 
$\m_x^n$ at $\calo_{V^{(d)},x}$ for any $x$ at $V^{(d)}$.

\vspace{0.2cm}

\noindent (2) Assume that $\m W^s$ is the Rees algebra generated by the monomial ideal $\m=I(H_1)^{h_1}\dots I(H_r)^{h_r}$ at degree $s$. Rees algebras are considered up to integral closure, so we shall now describe the integral closure of $\m W^s$, say $\widetilde{\m W^s}=\bigoplus J_tW^t$. Given a positive integer $t$, the ideal corresponding to the degree $t$, say $J_t$, is generated by a monomial, say 
\begin{equation}\label{eqintM}
\m^{[t]}=J_t=I(H_1)^{\lfloor\frac{h_1t}{s}\rfloor}\dots I(H_r)^{\lfloor\frac{h_rt}{s}\rfloor}.
\end{equation}
Moreover,
$\ord(\m W^s)(x)\leq \ord(\m^{[t]} W^t)(x),$
and  equality holds if and only if $\m W^s$ and $\m^{[t]} W^t$ have the same integral closure.
\end{remark}

\begin{parrafo}
Let $\pi: V'\longrightarrow V$ be a smooth morphism, then the pull-backs of the hypersurfaces of $E$ have normal crossings at $V'$ and a monomial ideal supported on $E$ has a natural lifting to $V'$.

In our setting, we fix a transversal smooth morphism $\beta: V^{(d)} \longrightarrow V^{(d-1)}$ as in Definition \ref{trpt}, a sequence (\ref{unaseq}) induces a diagram (\ref{cuadrado})
with smooth morphisms $\beta_i$ defined in a neighborhood of $\Sing(\G_i)$. Note that at each such neighborhood, the exceptional hypersurfaces 
in $V^{(d)}_{i}$ are pull-backs of the exceptional hypersurfaces at 
$V^{(d-1)}_{i}$. In particular, a monomial algebra supported on the exceptional locus of the composite map $V^{(d-1)}\longleftarrow V^{(d-1)}_r$, say
\begin{equation}\label{ecdm}
\m_rW^s=I(H_1)^{h_1}\dots I(H_r)^{h_r}W^s,
\end{equation}
can be naturally lifted to a monomial algebra supported on the exceptional locus of $V^{(d)}\longleftarrow V_r^{(d)}$.

\begin{theorem}[Bravo-Villamayor \cite{BV3}]\label{thm:BV}

Let $\G$ be a simple differential algebra and assume that $\Sing(\G)$ has no component of co-dimension one. There is a sequence of  transformations (\ref{unaseq}),
so that for any local transversal projection 
 $\beta:V^{(d)}\longrightarrow V^{(d-1)}$ (defined by restriction to an \'etale coverring of $V^{(d)}$), the induced sequence $(\ref{cuadrado})$ is such  that $(\R_{\G,\beta})_r$ is a monomial algebra supported on the exceptional locus. Furthermore, the monomial algebra $\beta^{*}_r((\R_{\G,\beta})_r)$ is independent of $\beta$.
\end{theorem}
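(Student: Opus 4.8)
The plan is to reduce the statement to the known resolution theory for differential Rees algebras on smooth schemes, applied to the elimination algebra $\R_{\G,\beta}$ on $V^{(d-1)}$, and then transport the resulting ``monomialization'' back up to $V^{(d)}$ via the compatibility diagram (\ref{cuadrado}). First I would fix an arbitrary local transversal projection $\beta:V^{(d)}\longrightarrow V^{(d-1)}$, form the elimination algebra $\R_{\G,\beta}$, and recall from \ref{csc} that $\R_{\G,\beta}$ is a (simple, after saturating by differential operators) Rees algebra on the smooth scheme $V^{(d-1)}$, whose singular locus contains $\beta(\Sing(\G))$ by property {\bf P1}. I would then invoke the constructive resolution of such algebras on smooth schemes over a perfect field (Encinas--Villamayor type resolution, in the algebra-theoretic formulation used throughout the references \cite{VV3}, \cite{VV4}): there is a canonical resolution sequence of $\R_{\G,\beta}$ terminating in a monomial algebra supported on the accumulated exceptional divisors $\{H_1,\dots,H_r\}$ in $V^{(d-1)}_r$. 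The point is that this lower resolution sequence is built purely by the combinatorics of orders of ideals, so it is the ``same'' sequence no matter which transversal $\beta$ one starts with, once one knows $\ord(\R_{\G,\beta})$ is $\beta$-independent.

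Next I would lift the lower sequence to a sequence of transformations of $\G$ on $V^{(d)}$: each center $C'_{i}\subset\Sing((\R_{\G,\beta})_i)$ of the lower resolution lies, by {\bf P1} (its smooth-center version) and {\bf P0}, in bijective smooth correspondence with a center $C_i\subset\Sing(\G_i)$ upstairs, and blowing up $C_i$ induces the blow-up of $C'_i$ downstairs; this is exactly the compatibility recorded in (\ref{cuadrado}) and (ii) there, so that $(\R_{\G,\beta})_i=\R_{\G_i,\beta_i}$ holds at every stage. Running the lifted sequence produces (\ref{unaseq}) with $(\R_{\G,\beta})_r$ equal to the target monomial algebra $\calo_{V^{(d-1)}_r}[I(H_1)^{h_1}\cdots I(H_r)^{h_r}W^s]$ supported on the exceptional locus, which is the first assertion. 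For the final independence statement, I would argue that the sequence of centers of the lower resolution is intrinsic to $\G$: by {\bf P2} the function $\ord(\R_{\G,\beta})\circ\beta$ on $\Sing(\G)$ does not depend on the transversal projection $\beta$, and the constructive resolution of $\R_{\G,\beta}$ reads off its centers from the invariants built out of $\ord$ of $\R_{\G,\beta}$ and of its successive coefficient/elimination data — all of which descend to $\Sing(\G)$ and are thus $\beta$-independent. Hence two transversal projections $\beta,\beta'$ yield the same upstairs sequence (\ref{unaseq}), the same exceptional hypersurfaces $H_1,\dots,H_r$ in $V^{(d)}_r$, and the same exponents $h_i$; therefore $\beta^*_r((\R_{\G,\beta})_r)=\beta'^{*}_r((\R_{\G,\beta'})_r)$ as monomial algebras on $V^{(d)}_r$.

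The main obstacle, and the step that genuinely requires care, is the $\beta$-independence of the whole sequence, not merely of the first invariant $\ord$. The constructive resolution of $\R_{\G,\beta}$ proceeds by an induction on dimension of its own (passing to coefficient algebras / further eliminations in $V^{(d-2)}, V^{(d-3)},\dots$), and one must check that each layer of that induction is compatible with changing $\beta$ upstairs and with the monoidal transformations already performed — in other words, that the entire resolution invariant of $\R_{\G,\beta}$, evaluated along $\Sing(\G)$ via $\beta$, is an invariant of $\G$ alone. This is where one leans on the robustness of {\bf P2} (stability of $\ord$ of elimination algebras under change of transversal projection) together with its compatibility with monoidal transformations from (\ref{cuadrado}); iterating these two facts through the resolution induction is the technical heart of the argument. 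A secondary, more routine, point to verify is that the transform operations commute on the nose, i.e. that resolving $\R_{\G,\beta}$ and then lifting gives the same result as lifting first and resolving the $\beta_i$-elimination algebras stage by stage — this follows from (ii) of (\ref{cuadrado}), namely $(\R_{\G,\beta})_i=\R_{\G_i,\beta_i}$, but it must be stated explicitly to close the loop.
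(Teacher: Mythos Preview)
The paper does not supply its own proof of this statement: Theorem \ref{thm:BV} is quoted as the Main Theorem of \cite{BV3} and used here only as a black box. So there is no in-paper argument to compare against.

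That said, your proposal has a genuine gap. You write that you would ``invoke the constructive resolution of such algebras on smooth schemes over a perfect field (Encinas--Villamayor type resolution)'' applied to $\R_{\G,\beta}$ on $V^{(d-1)}$. But Encinas--Villamayor resolution is a characteristic-zero procedure; over a perfect field of positive characteristic --- the setting of this entire paper --- no such constructive monomialization is available as a prior input. Producing a sequence of transformations that drives an elimination algebra to the monomial case is precisely the content of Theorem \ref{thm:BV}, so invoking it as a known tool is circular.

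One could try to salvage the argument by induction on the ambient dimension $d$, assuming the theorem for Rees algebras on $(d{-}1)$-dimensional smooth schemes and then lifting centers via (\ref{cuadrado}). That is much closer to the actual strategy of \cite{BV3}, but you do not set up such an induction, and even granting the inductive hypothesis substantial work remains: one must build an upper-semicontinuous stratifying function from the tower of iterated elimination algebras, show that its maximum locus is smooth and meets the exceptional divisors with normal crossings, prove $\beta$-independence of the entire tower (not just the first layer, as you rightly flag in your last paragraph), and establish termination. Your sketch correctly isolates the lifting mechanism and the $\beta$-independence concern, but the existence of a terminating monomialization procedure in positive characteristic is assumed rather than argued, and that is the heart of the matter.
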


In what follows, we can take, \'etale locally, a sequence (\ref{cuadrado})
as in the formulation of the Theorem \ref{thm:BV} (Main Theorem in \cite{BV3}). So here, $(\R_{\G,\beta})_r\subset \calo_{V^{(d-1)}}[W]$ is monomial and supported on the exceptional locus, and so is its pull-back to $V_r^{(d)}$. The same holds if we enlarge the sequence of transformations as this condition is stable.

We identify $(\R_{\G,\beta})_r$ with its pull-back, say 
$$(\R_{\G,\beta})_r=I(H_1)^{\alpha_1}\dots I(H_r)^{\alpha_r}W^s=\mathcal{N}W^s.$$
 It will be shown that locally at any closed point of $\Sing(\G_r)$, there is a $\beta_r$-section $z'$, a monic polynomial, say $ f_{p^e}^{(r)}$, so that $\G_r$ has the same integral closure as:
$$\calo_{V_r^{(d)}}[f_{p^e}^{(r)}(z)W^{p^e},\Delta^\alpha(f_{p^e}^{(r)}(z))W^{p^e-\alpha}]_{1\leq \alpha \leq p^e-1}\odot \mathcal{N}_rW^s$$
\end{parrafo}
\begin{parrafo}\label{trdpp}
The outcome of Theorem \ref{thm:BV}, in the case of fields of characteristic zero, is known as the reduction to the monomial case. In that context it is simple to extend (\ref{unaseq}) to a resolution. This is not the case in positive characteristic, however the following definition will lead us to the study of the role of the exceptional divisors.
\end{parrafo}
\begin{definition}\label{defmon}

\begin{enumerate}
\item[1)] We say that a monomial algebra $\m_r W^s$  (\ref{ecdm}) has \emph{monomial contact} with $\G_r$ if locally at any closed point $x\in\Sing(\G_r)$ there is a $\beta_r$-section  $z$ of order one at $\calo_{V_r^{(d)},x}$, so that 
$$\G_r\subset\langle z \rangle W \odot \m_r W^s.$$

\item[2)] A local $p$-presentation  of $\G_r$, say  $p\P(\beta_r, z,  f^{(r)}_{p^e}(z))$ (with$ f_{p^e}^{(r)}=z^{p^{e}}+a_1z^{p^{e}-1}+\dots+a_{p^{e}})$),
 is said to be \emph{compatible with the monomial algebra} $\calo_{V^{(d-1)}}[\m_r W^s]$ locally at $x\in\Sing(\G_r)$
 if the previous condition holds for the $\beta_r$-section $z$. Proposition \ref{pr_locp} ensures that this is equivalent to two conditions:
\begin{enumerate}
\item[i)] $(\R_{\G,\beta})_r\subset \calo_{V_r^{(d-1)}}[\m_r W^s]$,

\item[ii)] $a_i W^i \in \calo_{V_r^{(d-1)}}[\m_r W^s]$, for $1 \leq i \leq p^e$ (Remark \ref{kr22}).
\end{enumerate}
\end{enumerate}

\end{definition}
\end{parrafo}
 
\begin{parrafo}\label{p26}
We will show that given a simple algebra $\G$ and a sequence of transformations as in 
(\ref{unaseq}),  there is a monomial algebra $\m_r W^s$ supported on the exceptional locus which has monomial contact with $\G_r$. That is, locally at any point $x\in\Sing(\G_r)$ there is a $\beta_r$-section $z$ of order one at $x$, so that $\G_r\subset\id{z}W\odot \m_r W^s.$ Main Theorem 2 will show that this monomial algebra will be defined in terms of the sequence (\ref{unaseq}), with independence of the choice of $\beta$ (of (\ref{cuadrado})).
\end{parrafo}

 \section{{Invariants defined in terms of $p$-presentations.}}\label{sec333}

\begin{parrafo}
Fix a transversal smooth projection $\beta:V^{(d)}\longrightarrow V^{(d-1)}$ (Definition \ref{trpt}) and a simple $\beta$-differential algebra $\G$. In Definition \ref{pr_locp} we introduced the notion of $p$-presentation, say $p\P=p\P(\beta,z,f_{p^e}(z))$. The aim of this Section is to define two functions:
\begin{enumerate}
\item a function $Sl(p\P)(-):V^{(d-1)}\longrightarrow \mathbb{Q}$ (Definition \ref{defslp}).
\item a function $\beta-ord^{(d-1)}(\G)(-):V^{(d-1)}\longrightarrow \mathbb{Q}$ (Definition \ref{def:z:adap:x}).
\end{enumerate}

There are many $p$-presentations $p\P$ which make use of the fixed projection $\beta$. Each $p$-presentation will define a function $Sl(p\P)$. The value of the new function $\beta-ord^{(d-1)}(\G)$ at a given point $y\in V^{(d-1)}$ will be given by the biggest value of the form $Sl(p\P)(y)$ among all $p$-presentations making use of $\beta$.

Over fields of characteristic zero,  the function $\beta-ord^{(d-1)}(\G)$ coincides  with the upper-semicontinous function $\ord(\R_{\G,\beta})$ (see \ref{dford}).  The situation in positive characteristic is quite different, for example  $\beta-\ord^{(d-1)}(\G)$ is not upper-semi-continous. Theorem \ref{rmk:x:restr} features a peculiar behavior of the function $Sl(p\P)$, which also leads to a simplification which will be crucial in our further development.

The function in (2) is a first step in the definition of our inductive function $v-ord^{(d-1)}$ in Section \ref{secMT}. 
In this section we simple fix a transversal projection $\beta$ and study different rational numbers, attached to a point,  defined by choosing different transversal sections $z=0$. We focus here, essentially, on how the function in (1) varies for different choices of $z$.
\end{parrafo}

\begin{definition}\label{defslp}
 Fix $\G$, $\beta:V^{(d)}\longrightarrow V^{(d-1)}$,  a  $\beta$-section $z$, and $f_{p^e}(z)$ as in \ref{pr_locp}. Namely,  fix a $p$-presentation $p\P(\beta, z,  f_{p^e}(z))$ with 
$  f_{p^e}(z)=z^{p^{e}}+a_1^{}z^{p^{e}-1}+\dots+a_{p^{e}}^{}$
 as in (\ref{eqld}), so that $\G$ has the same integral closure as
$$\calo_{V^{(d)}}[f_{p^e}(z)W^{p^e},\Delta^{(\alpha)}(f_{p^e}(z))W^{p^e-\alpha}]_{1\leq\alpha\leq p^e-1}\odot\R_{\G,\beta}.$$
Define $Sl(p\P)(-)\!: V^{(d-1)}\longrightarrow\mathbb Q,$
$$Sl(p\P)(y):=\min_{1\leq j\leq p^e}\Big\{\frac{\nu_y(a_j)}{j},\ord(\R_{\G,\beta})(y)\Big\} ,$$
called the \emph{slope of $\G$ relative to $p\P=p\P(\beta, z,  f_{p^e}(z))$ at $y\in V^{(d-1)}$}.
\end{definition}

\begin{remark}\label{rk32}
 The function $\ord(\R_{\G,\beta})(-): V^{(d-1)}\longrightarrow \mathbb{Q}$ takes values with denominators in $\frac{1}{n}\mathbb{Z}$, for some integer $n>0$. Thus the same holds for the slope function: it takes values in $\frac{1}{n(p^e!)}\mathbb{Z}$. Moreover, both functions  take only finitely many values.
\end{remark}

\begin{remark}\label{rkcrit} 
Given a $p$-presentation $p\P=p\P(\beta,z,f_{p^e}(z))$, other $p$-presentations can be defined, for example by  changing the section $z$. Here, given $x_0\in V^{(d)}$  we study conditions on $z$ for which $Sl(p\P)(\beta(x_0))>0$.  Note here that we do not assume that $z$ vanishes at $x_0$.

The element $z$ in the $p$-presentation in Definition \ref{defslp} defines a closed set, say $\overline{V}=\{z=0\}\subset V^{(d)}$, which is a section of $\beta$. In particular, any point $y\in V^{(d-1)}$ can be identified with a point in $\overline{V}$, say $x\in\overline{V}\subset V^{(d)}$, namely $x=\beta^{-1}(y)\cap \{z=0\}$.

The value of the function $Sl(p\P)$ at a point $y\in V^{(d-1)}$ provides information of $\G$, locally at $x$.

In this Remark we show that $Sl(p\P)(y)>0$ if and only if $\ord(\G)(x)>0$.
Here, we discuss some equivalent formulations of this condition. 

Let $k(y)$ be the residue field of the local ring $\calo_{{V^{(d-1)}},y}$ and let $Z$ denote the restriction of $z$ to $\beta^{-1}(y)$, the fiber over
 $y$. Fix, as above, $x=\beta^{-1}(y)\cap\{z=0\}$. So $x$ is the unique point dominating $y$ for which $z$ is a non-invertible element at $\calo_{{V^{(d)}},x}$, and $z$ is of order one at this local ring.
 
 Here, $f_{p^e}(z)$ is a global function of $V^{(d)}$, and the restriction
 $$\overline{\beta}: V(\langle f_{p^e}(z)\rangle) \longrightarrow V^{(d-1)}$$ is finite and flat.
 On the other hand, $V(\langle f_{p^e}(z)\rangle)\cap \beta^{-1}(y)$ can be identified with the subscheme in $\Spec(k(y)[Z])$ defined by the monic polynomial, say 
 $$\overline{f_{p^e}(z)}=Z^{p^e}+\overline{a}_1Z^{p^e-1}+\dots+\overline{a}_{p^e}\in k(y)[Z].$$
 Moreover, $x\in V(\langle f_{p^e}(z)\rangle)$ if and only if $\overline{a}_{p^e}=0$. In fact $Z$ is the class of $z$ on the fiber.
 
The following are equivalent conditions for the section $z$ and the point $y$:
\begin{enumerate}
\item[(a)] $Sl(p\P)(y)> 0$.

\item[(b)]  $\overline{f_{p^e}(z)}=Z^{p^e}$ and 
$\ord(\R_{\G,\beta})(y)>0$.

\item[(c)] $\ord(\R_{\G,\beta})(y)>0$,  the induced finite map, 
$$\overline{\beta}: V(\langle f_{p^e}(z)\rangle) \longrightarrow V^{(d-1)},$$
has a unique point, say $x$, dominating $y$, and $z$ is a non-invertible at $\calo_{V^{(d)},x}$. 
\item[(d)] $\ord(\R_{\G,\beta})(y)>0$, $V(\langle f_{p^e}(z)\rangle)\cap \beta^{-1}(y)$ is a unique point, say $x$, the local rings $\calo_{V^{(d)},x}$ and $\calo_{V^{(d-1)},y}$ have the same residue field, say $k(x)=k(y)$,
 and if $\{y_1, \dots ,y_{s}\}$ is a regular system of parameters in $\calo_{V^{(d-1)},y}$, then  $\{y_1, \dots ,y_{s},z\}$ is a regular system of parameters in $\calo_{V^{(d)},x}$.

\end{enumerate}
\end{remark}

\begin{remark}\label{rmk45}

\ \  1) We shall prove in Proposition \ref{prop36} i) that if $x\in \Sing(\G)$ (i.e., if $\ord(\G)(x)\geq 1$), all conditions in d) will hold at $y=\beta(x)$, whenever  a $\beta$-transversal section $z$ is chosen with 
order one at  $\calo_{V^{(d)},x}$. Moreover, in such case  $Sl(p\P)(y)\geq 1$.

2) To prove that (c) implies (d) note that if (c) holds, then $Z$ divides  $\overline{f_{p^e}(z)}$. As there is only one factor, then
$\overline{f_{p^e}(z)}=Z^{p^e}$, and so the point $x$ must be rational over $k(y)$.
\end{remark}

\begin{theorem}\label{rmk:x:restr}
Fix  $\G$ and $p\P=p\P(\beta, z,  f_{p^e}(z))$ as in {\rm \ref{pr_locp}}.
If $Sl(p\P)(y)=\displaystyle\frac{\nu_y(a_j)}{j}$ for some index $j\in\{1,\dots,p^e-1\}$, then $Sl(p\P)(y)=\ord(\R_{\G,\beta})(y)$. In particular,
$$Sl(p\P)(y)=\min\Big\{\frac{\nu_y(a_{p^e})}{p^e},\ord(\R_{\G,\beta})(y)\Big\}.$$
\end{theorem}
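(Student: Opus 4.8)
The plan is to exploit the differential structure of the algebra $\G$ in the $p$-presentation. The hypothesis says that the minimum in the definition of $Sl(p\P)(y)$ is achieved at some middle coefficient, i.e. $Sl(p\P)(y) = \nu_y(a_j)/j$ with $1\le j\le p^e-1$; we must show this forces $Sl(p\P)(y)$ to equal $\ord(\R_{\G,\beta})(y)$, which by the definition of the slope is always $\ge Sl(p\P)(y)$. So the real content is the reverse inequality $\ord(\R_{\G,\beta})(y) \le \nu_y(a_j)/j$. First I would invoke the key observation, recorded in the local presentation (Proposition \ref{pr_local}, \ref{pr_locp}): $\G$ has the same integral closure as $\calo_{V^{(d)}}[f_{p^e}(z)W^{p^e},\Delta^{(\alpha)}(f_{p^e}(z))W^{p^e-\alpha}]_{1\le\alpha\le p^e-1}\odot\R_{\G,\beta}$, so all the $\Delta^{(\alpha)}$-derivatives of $f_{p^e}(z)$ are in $\G$, and the elimination algebra $\R_{\G,\beta}$ is, up to integral closure, the elimination of this differential algebra. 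The point is that $\R_{\G,\beta}$ contains the elimination of the subalgebra generated by $f_{p^e}(z)$ and its relative derivatives alone.

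The next step is a direct computation with the derivatives of $f_{p^e}(z)=z^{p^e}+a_1z^{p^e-1}+\dots+a_{p^e}$. Since $p^e$ is a power of the characteristic, $\Delta^{(r)}(z^{p^e})=0$ for $1\le r\le p^e-1$; hence $\Delta^{(r)}(f_{p^e}(z))=\sum_{i}\Delta^{(r)}(a_i z^{p^e-i})=\sum_i a_i\binom{p^e-i}{r}z^{p^e-i-r}$, a polynomial in $z$ of degree $\le p^e-1-r$ over $\calo_{V^{(d-1)}}$ whose coefficients are, up to units, among the $a_i$'s. In particular, for a suitable $r$ (namely $r = p^e - 1 - \ell$ for appropriate $\ell$), one of these relative derivatives is, up to a unit, a polynomial whose leading coefficient in $z$ is exactly $a_j$ — more to the point, applying $\Delta^{(p^e-j)}$ one gets $\Delta^{(p^e-j)}(f_{p^e}(z))$ which has a $z$-free term a unit times $a_j$ (the unit being $\binom{p^e-j}{p^e-j}=1$ times possibly other $a_i$-terms of higher $z$-degree). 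The cleanest route is: $a_j W^j\in \G$ up to integral closure, because $a_j$ arises (up to the algebra operations $\odot$ and taking derivatives) from $f_{p^e}(z)$ and its $\Delta^{(\alpha)}$; concretely one extracts $a_j W^j$ by combining $\Delta^{(\alpha)}(f_{p^e}(z))$ for varying $\alpha$ and specializing along $z=0$, which is legitimate inside the integral closure of a differential algebra. Then, because $\R_{\G,\beta}$ is the elimination algebra and the $a_i$ are $\beta$-invariant (they live on $V^{(d-1)}$), the element $a_j W^j$ descends into $\R_{\G,\beta}$; the properties of elimination quoted in \ref{csc} (in particular that the elimination captures the order of all the coefficients when the projection is transversal) give $a_j W^j\in \R_{\G,\beta}$, hence $\ord(\R_{\G,\beta})(y)\le \nu_y(a_j)/j$. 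Combined with the always-valid inequality $Sl(p\P)(y)\le\ord(\R_{\G,\beta})(y)$ and the hypothesis $Sl(p\P)(y)=\nu_y(a_j)/j$, we get equality throughout; the "in particular" clause then follows because the min over all indices $1\le j\le p^e$ now reduces to comparing only $\nu_y(a_{p^e})/p^e$ with $\ord(\R_{\G,\beta})(y)$, every middle term having been shown to be $\ge\ord(\R_{\G,\beta})(y)$.

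The main obstacle I anticipate is making precise the claim that $a_j W^j$ lies in $\R_{\G,\beta}$ (up to integral closure): one must be careful that the operation of "extracting a coefficient $a_i$ by differentiating $f_{p^e}(z)$ and setting $z=0$" genuinely stays inside the differential algebra and then inside the elimination algebra, rather than merely inside the total ring. The honest argument presumably goes through the universal elimination algebra formalism used in the proof of Proposition \ref{pr_local} (following Chapter 1 of \cite{VV4}): one checks that the universal coefficient $\Delta^{(p^e-j)}F_{p^e}(Z)$ evaluated along the translation that kills $Z$ is an element of the universal elimination algebra of the right degree, and then specializes. The binomial coefficients $\binom{p^e-i}{r}$ that appear are potentially zero modulo $p$ for many $(i,r)$, so one must choose the differentiation order $r$ carefully — this is a Lucas-theorem bookkeeping issue, but since $p^e$ is a pure power of $p$ the relevant coefficient $\binom{p^e-j}{p^e-j}=1$ is always nonzero, so the obstruction is only apparent and the argument closes.
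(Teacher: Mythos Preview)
Your overall strategy is the right one: produce an element of the elimination algebra $\R_{\G,\beta}$ whose order at $y$ is exactly $\nu_y(a_j)/j$, and combine this with the trivial inequality $Sl(p\P)(y)\le\ord(\R_{\G,\beta})(y)$. The gap is in the step where you assert that $a_jW^j$ itself lies in $\R_{\G,\beta}$ ``by setting $z=0$''. Restriction to the section $\{z=0\}$ is \emph{not} how the elimination algebra is built: $\R_{\G,\beta}$ is obtained from invariants under the symmetric group (equivalently, from norms in the finite extension $\calo_{V^{(d)}}/\langle f_{p^e}\rangle$ over $\calo_{V^{(d-1)}}$), and there is no reason a bare coefficient $a_j$ should be such an invariant. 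Your own computation shows why: $\Delta^{(p^e-j)}(f_{p^e}(z))=c_1a_1z^{j-1}+\dots+c_{j-1}a_{j-1}z+a_j$ still depends on $z$, so passing to the constant term is not an operation available inside $\G$ or its elimination.

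The paper's argument repairs this as follows. First choose $n$ to be the \emph{smallest} index in $\{1,\dots,p^e-1\}$ realizing the minimum, so that $\nu_y(a_i)/i>\nu_y(a_n)/n$ strictly for $i<n$. Then take the class of $\Delta^{(p^e-n)}(f_{p^e}(z))W^n$ in $\calo_{V^{(d)}}/\langle f_{p^e}\rangle[W]$ and compute its \emph{norm} over $\calo_{V^{(d-1)}}[W]$; this norm $G(a_1,\dots,a_{p^e})W^{np^e}$ lies in $\R_{\G,\beta}$ by the very definition of elimination. A direct weighted-homogeneity check gives $G=a_n^{p^e}+\widetilde G$ with $\widetilde G\in\langle a_1,\dots,a_{n-1}\rangle$, and now the minimality of $n$ forces every monomial of $\widetilde G$ to have order strictly larger than $\nu_y(a_n^{p^e})$. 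Hence $\nu_y(G)=p^e\nu_y(a_n)$ and $\ord(\R_{\G,\beta})(y)\le\nu_y(G)/(np^e)=\nu_y(a_n)/n$. The two ingredients you are missing are therefore (i) the norm construction replacing ``set $z=0$'', and (ii) the minimality of the index, which is exactly what controls the extra terms the norm introduces.
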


\begin{proof}
Let $n\in\{1,\dots,p^e-1\}$ be the smallest index for which $Sl(p\P)(y)=\displaystyle\frac{\nu_y(a_n)}{n}$. That is,
\begin{equation}\label{eq:ineq:ord}
\frac{\nu_y(a_n)}{n}<\frac{\nu_y(a_i)}{i}\ \hbox{ for }\ i\leq n-1\ \ \ \hbox{ and }\ \ \ \frac{\nu_y(a_n)}{n}\leq \frac{\nu_y(a_\ell)}{\ell}\ \hbox{ for }\ \ell\geq n+1.
\end{equation}
Recall the definition of the $\beta$-differential operators $\Delta^{(r)}$ in $\ref{pq}$. As $\G$ is assumed to be a $\beta$-differential algebra, then
$\Delta^{(p^e-n)}(f_{p^e}(z))W^{n}\in \G$.

Note that 
$$\Delta^{(p^e-n)}(f_{p^e}(z))W^{n}=(c_1a_1z^{n-1}+\dots+c_{n-1}a_{n-1}z+a_{n})W^{n}\in\G$$ 
for some elements $c_i\in k$ for $i=1,\dots,n-1$.

Let $\overline{\Delta^{p^e-n}(f_{p^e}(z))}W^n$ denote the  class of $\Delta^{(p^e-n)}(f_{p^e}(z))W^{n}$ in $\calo_{V^{(d)}}/\langle f_{p^e}(z) \rangle[W]$. The scheme $\calo_{V^{(d)}}/\langle f_{p^e}(z) \rangle[W]$ is a finite and free extension of $\calo_{V^{(d-1)}}[W]$.
The norm of the element
$$\overline{\Delta^{p^e-n}(f_{p^e}(z))}W^n=(c_1a_1\overline{z}^{n-1}+\dots+c_{n-1}a_n\overline{z}+a_{n})W^{n}$$ 
 over $\calo_{V^{(d-1)}}[W]$ is an element of the elimination algebra of $f_{p^e}(z)$, and hence of
  $\R_{\G,\beta}$ (see \cite{VV4}). Denote this element by $G(a_1,\dots,a_{p^e})W^{t}\in\R_{\G,\beta}$. In addition, in this case $t=np^e$, and $G(V_1,\dots,V_{p^e})\in k[V_1, \dots ,V_{p^e}]$ is a weighted homogeneous of degree $t=p^e$ provided each $V_i$ is given weight $i$.

Note that,
\begin{enumerate}
\item $G(a_1,\dots,a_{p^e})=a_n^{p^e}+\widetilde{G}(a_1,\dots,a_{p^e})$.
\item $\widetilde{G}(a_1,\dots,a_{p^e})\in\langle a_1,\dots,a_{n-1}\rangle$.
\end{enumerate}
To check the last assertion set formally $a_1=0,\dots,a_{n-1}=0$, in which case  $\overline{\Delta^{p^e-n}(f_{p^e}(z))}W^n=a_nW^n$, which has norm $a_n^{p^e}W^{n{p^e}}$. 

 Here $\widetilde{G}$ is a weighted homogeneous polynomial of degree $np^e$, and each monomial in $\widetilde{G}$ is of the form $a_1^{\alpha_1}\dots a_{p^e}^{\alpha_{p^e}}$ with $\sum_{j=1}^{p^e}j\alpha_j=np^e$, and $\alpha_j\not=0$ for some $j<n$ (as $\widetilde{G}(a_1,\dots,a_{p^e})\in\langle a_1,\dots,a_{n-1}\rangle$).

We claim that $\nu_y(a_1^{\alpha_1}\dots a_{p^e}^{\alpha_{p^e}})>\nu_y(a_n^{p^e})=p^e\nu_y(a_n)$ for any monomial in $\widetilde{G}$. In fact:
$$\nu_y(a_1^{\alpha_1}\dots a_{p^e}^{\alpha_{p^e}})=\sum_{j=1}^{p^e}\alpha_j\nu_y(a_j)>\sum_{j=1}^{p^e}\alpha_j j\frac{\nu_y(a_n)}{n}=np^e\frac{\nu_y(a_n)}{n}=\nu_y({a_n^{p^e}}),$$
where the inequality follows from the hypotheses in (\ref{eq:ineq:ord}). In particular, $\nu_y(\widetilde{G})>\nu_y(a_n^{p^e})$.

This proves that the order of $GW^{np^e} (\in\R_{\G,\beta})$ is $\frac{\nu_{y}(G)}{np^e}=\frac{\nu_y(a_n^{p^e})}{np^e}=\frac{\nu_y(a_n)}{n}.$
Hence $\ord(\R_{\G,\beta})(y)\leq\frac{\nu_y(G)}{np^e}=\frac{\nu_y(a_n)}{n}=Sl(p\P)(y)$. Finally, this inequality together with $Sl(p\P)(y)\leq\ord(\R_{\G,\beta})(y)$ implies that $Sl(p\P)(y)=\ord(\R_{\G,\beta})(y)$.
\end{proof}

\begin{remark}\label{rmk:Slxy}
Let $p\P$ be a $p$-presentation defined in a neighborhood of a closed point $x\in V^{(d-1)}$ and assume $x\in\overline{y}$ for some $y\in V^{(d-1)}$. In this case,
$$Sl(p\P)(y)\leq Sl(p\P)(x).$$
Recall that
$Sl(p\P)(y)=\big\{\frac{\nu_y(a_{p^e})}{p^e},\ord(\R_{\G,\beta})(y)\big\}.$
Since $p\P$ is defined in a neighborhood of $x$, then $\nu_y(a_{p^e})\leq\nu_x(a_{p^e})$. The upper-semicontinuity of $\ord(\R_{\G,\beta})$ implies that $\ord(\R_{\G,\beta})(y)\leq \ord(\R_{\G,\beta})(x)$. Thus $Sl(p\P)(y)\leq Sl(p\P)(x)$.
\end{remark}

\begin{proposition}\label{prop36}
Fix  $\G$ and $\beta: V^{(d)}\longrightarrow V^{(d-1)}$ together with a $p$-presentation $p\P=p\P(\beta, z,  f_{p^e}(z))$ as in {\rm \ref{pr_locp}}.

\begin{enumerate}
\item[i)] Suppose that $Sl(p\P)(y)>0$ at $y\in V^{(d-1)}$ and let  $x$ be the unique point in $V(\langle f_{p^e}\rangle)$ mapping to $y$ (see Remark {\rm\ref{rkcrit}}). Then,
 $$x\in \Sing(\G)\ \hbox{ if and only if }\ Sl(p\P)(y)\geq 1.$$

\item[ii)] If $\beta^{-1}(y) \cap \Sing(\G) \neq \emptyset $, then  $\beta^{-1}(y) \cap \Sing(\G) $ is a unique point, say $q$, and:

\begin{enumerate}
\item[iia)] If $Sl(p\P)(y)>0$, then $q$ is the unique point in $V(\langle f_{p^e}\rangle)$ that maps to $y$.

\item[iib)] If $Sl(p\P)(y)=0$, then the class of $a_{p^e}$ is a $p^e$-th power in 
$k(y)$, say $\overline{a}_{p^e}=\alpha^{p^e}$, and the class of $a_{i}$ is zero for $i=1, \dots , p^e-1$. Namely,
$$\overline{f_{p^e}(z)}=Z^{p^e}+\alpha^{p^e}\in k(y)[Z].$$

\end{enumerate}
\end{enumerate}
\end{proposition}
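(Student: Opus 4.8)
The plan is to establish each assertion using the structure of $p$-presentations together with the equivalences collected in Remark \ref{rkcrit} and the properties P0)--P3) of elimination algebras. For part i), recall from Remark \ref{rkcrit} that $Sl(p\P)(y)>0$ is equivalent to condition (d), so the unique point $x$ lying over $y$ in $V(\langle f_{p^e}\rangle)$ is rational over $k(y)$ and $\{y_1,\dots,y_s,z\}$ is a regular system of parameters at $\calo_{V^{(d)},x}$. By the local presentation \ref{pr_local}, $\G$ has the same integral closure as $\calo_{V^{(d)}}[f_{p^e}(z)W^{p^e},\Delta^{(\alpha)}(f_{p^e}(z))W^{p^e-\alpha}]_{1\leq\alpha\leq p^e-1}\odot\beta^*(\R_{\G,\beta})$, hence by Remark \ref{kr22} the function $\ord(\G)$ is computed from this presentation. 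The point $x$ lies in $\Sing(\G)$ iff $\ord(\G)(x)\geq 1$, and since $z$ has order one at $\calo_{V^{(d)},x}$, I would expand $f_{p^e}(z)=z^{p^e}+a_1z^{p^e-1}+\dots+a_{p^e}$ and each $\Delta^{(\alpha)}(f_{p^e}(z))$ in this regular system of parameters: the order of $f_{p^e}(z)$ at $x$ is $\min\{p^e,\ \nu_x(a_j)+(p^e-j)\ \text{over } j\}$, and the differential-operator generators together with $\beta^*(\R_{\G,\beta})$ encode exactly the quantities $\nu_y(a_j)/j$ and $\ord(\R_{\G,\beta})(y)$. A direct check — essentially the same weighted-order bookkeeping used in the proof of Theorem \ref{rmk:x:restr} — shows $\ord(\G)(x)\geq 1$ iff $\min_j\{\nu_y(a_j)/j,\ \ord(\R_{\G,\beta})(y)\} = Sl(p\P)(y)\geq 1$. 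This gives the equivalence.

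For part ii), the uniqueness of $q$ in $\beta^{-1}(y)\cap\Sing(\G)$ is exactly P3) (the last sentence: there is at most one $x\in V^{(d)}$ with $\beta(x)=y$ and $\ord(\G)(x)>0$), granted that $\ord(\R_{\G,\beta})(y)>0$; one first notes that if $\beta^{-1}(y)\cap\Sing(\G)\neq\emptyset$ then by P1) $y=\beta(q)\in\Sing(\R_{\G,\beta})$, so $\ord(\R_{\G,\beta})(y)\geq 1>0$, and P3) applies. For iia), assume $Sl(p\P)(y)>0$. Then by Remark \ref{rkcrit} there is a unique point $x$ over $y$ in $V(\langle f_{p^e}\rangle)$; by part i) combined with the hypothesis $\beta^{-1}(y)\cap\Sing(\G)\neq\emptyset$ one has $Sl(p\P)(y)\geq 1$ and $x\in\Sing(\G)$, so $x=q$ by the uniqueness just established. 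Conversely $q\in\Sing(\G)$ forces $q\in V(\langle f_{p^e}\rangle)$ since $f_{p^e}(z)W^{p^e}\in\G$ and $\ord(\G)(q)\geq1$ implies $\nu_q(f_{p^e}(z))\geq p^e\geq 1$.

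For iib), suppose $Sl(p\P)(y)=0$ but $\beta^{-1}(y)\cap\Sing(\G)=\{q\}\neq\emptyset$; as above $\ord(\R_{\G,\beta})(y)\geq 1>0$, so by P3) the restriction $\overline{f_{p^e}(z)}=Z^{p^e}+\overline{a}_1Z^{p^e-1}+\dots+\overline{a}_{p^e}\in k(y)[Z]$ is a power of a purely inseparable polynomial, i.e.\ $\overline{f_{p^e}(z)}=(Z^{p^r}+b)^m$ with $p^rm=p^e$. I would then argue that $q$, being the unique point of $\Sing(\G)$ over $y$ and hence the unique root-locus point of $\overline{f_{p^e}(z)}$ carrying the singularity, forces $m=1$, $r=e$: if $m>1$ the polynomial $(Z^{p^r}+b)^m$ in one variable over $k(y)$ still has a single geometric root but the multiplicity structure combined with $\ord(\G)(q)\ge1$ and the differential-operator generators $\Delta^{(\alpha)}$ would lower the order below $p^e$ along the fiber in a way incompatible with $q$ being the only singular point; more directly, since $\ord(\G)(q)\ge 1$ one needs $\nu_q(f_{p^e}(z))\ge p^e$, and reducing modulo $m_y$ this says $\overline{f_{p^e}(z)}$ must vanish to order $p^e$ at the image of $q$ in $\Spec k(y)[Z]$, forcing $\overline{f_{p^e}(z)}=(Z-\gamma)^{p^e}$ for $\gamma\in k(y)$; comparing with $(Z^{p^r}+b)^m$ gives $\overline{f_{p^e}(z)}=Z^{p^e}+\alpha^{p^e}$ after the translation $Z\mapsto Z-\gamma$ is absorbed (equivalently $\alpha=-\gamma$, $\overline{a}_{p^e}=\gamma^{p^e}=\alpha^{p^e}$ and $\overline{a}_i=0$ for $1\le i\le p^e-1$ by the multinomial expansion in characteristic $p$). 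The main obstacle I anticipate is precisely this last step in iib): carefully ruling out the case $m>1$ and showing the purely inseparable reduced polynomial from P3) must in fact be a pure $p^e$-th power $Z^{p^e}+\alpha^{p^e}$ rather than a proper power of a lower-degree inseparable polynomial. This requires combining the fiber-wise vanishing order imposed by $q\in\Sing(\G)$, the differential structure of $\G$ (the generators $\Delta^{(\alpha)}(f_{p^e}(z))W^{p^e-\alpha}$ constrain how the $\overline{a}_i$ can behave), and the uniqueness in P3); the rest of the argument is a matter of organizing the equivalences already available in Remarks \ref{rkcrit} and \ref{rmk45} and in P1)--P3).
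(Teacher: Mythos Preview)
Your proposal is correct and tracks the paper's proof closely for i), ii) (uniqueness), and iia). The only substantive divergence is in iib), and there your ``more direct'' argument already finishes the job---the obstacle you flag is illusory.

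Concretely, for iib) the paper does not go through the purely-inseparable form from P3) at all. It argues as follows: since $q\in\Sing(\G)$, P1) gives $y\in\Sing(\R_{\G,\beta})$, so $\ord(\R_{\G,\beta})(y)\geq 1$; then Theorem~\ref{rmk:x:restr} is applied in contrapositive form---because $Sl(p\P)(y)=0<\ord(\R_{\G,\beta})(y)$, the minimum defining $Sl(p\P)(y)$ cannot be attained at any index $j<p^e$, which forces $\nu_y(a_j)>0$, i.e.\ $\overline{a}_j=0$ for $j=1,\dots,p^e-1$. Finally, the paper invokes Zariski's result on the $p^e$-fold locus (P0)) to get $k(q)=k(y)$, so $\overline{f_{p^e}(z)}=Z^{p^e}+\overline{a}_{p^e}$ has a $k(y)$-rational root and hence equals $(Z+\alpha)^{p^e}$.

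Your route---restrict the inequality $\nu_q(f_{p^e}(z))\geq p^e$ to the fiber and conclude $\overline{f_{p^e}(z)}=(Z-\gamma)^{p^e}$---is equally valid and in fact self-contained: if the prime in $k(y)[Z]$ corresponding to $q$ were generated by an irreducible $g$ of degree $d>1$, then $g^{p^e}\mid\overline{f_{p^e}(z)}$ would contradict $\deg\overline{f_{p^e}(z)}=p^e$, so $d=1$ and $\gamma\in k(y)$ automatically. This simultaneously gives $\overline{a}_i=0$ for $i<p^e$ and $\overline{a}_{p^e}=\gamma^{p^e}$, so you never need to separately rule out $m>1$ in the decomposition $(Z^{p^r}+b)^m$. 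The paper's approach buys a cleaner invocation of Theorem~\ref{rmk:x:restr}; yours buys independence from P3) and from the equimultiplicity statement, at the cost of a short degree argument.
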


\begin{proof}\ \ 
i) Fix a regular system of parameters $\{y_1, \dots , y_s\}$ in $\calo_{V^{(d-1)},y}$. In this case, $\{y_1, \dots , y_s,z\}$ is a regular system of parameters in $\calo_{V^{(d)},x}$, so 
$f_{p^e}(z)=z^{p^{e}}+a_1^{}z^{p^{e}-1}+\dots+a_{p^{e}}^{}\in m^{p^e}_{x}$ if and only if $a_i \in m_{y}^{i}$. The equivalence now follows straightforward.

\vspace{0.15cm}

ii) Note that $\Sing(\G)\subset V(\langle f_{p^e}\rangle)$. Moreover,  
$\Sing(\G)\subset \mathcal{F}_{p^e}$, the closed set of points  of multiplicity $p^e$ of the hypersurface $V(\langle f_{p^e}\rangle)$. A theorem of Zariski states that $\beta$ induces a set theoretical bijection: $\beta: \mathcal{F}_{p^e} \longrightarrow \beta(\mathcal{F}_{p^e})$, and matching points have the same residue field. This proves property P0) in \ref{csc} (see \cite{BV3}, 8.4). In particular $\beta^{-1}(y) \cap \Sing(\G) $ is a unique point.

iia) As $q \in  V(\langle f_{p^e}\rangle)$, the assertion follows from the equivalence of a) and c) in \ref{rkcrit}.

\vspace{0.15cm}

iib) In this case, $y=\beta(q)\in \Sing(\R_{\G,\beta})$ (see P3) in \ref{csc}), so $\ord(\R_{\G,\beta})(y)\geq 1 $. On the other hand,  as $q \in \mathcal{F}_{p^e} $, then $k(q)=k(y)$. This together with 
Theorem \ref{rmk:x:restr} imply that 
$$\overline{f_{p^e}(z)}=Z^{p^e}+\overline{a}_{p^e}\in k(y)[Z],$$
in \ref{rkcrit}, and that this purely inseparable polynomial is a $p^e$-th power of a monic polynomial of degree 1, say
$Z^{p^e}+\overline{a}_{p^e}=(Z+\alpha)^{p^e}$ in $ k(y)[Z].$
\end{proof}

\begin{corollary}\label{corol:2pres:geq1}
  Fix two $p$-presentations for $\G$ on 
$V^{(d)}$. Say, $p\P$, defined in terms of
$\beta:V^{(d)}\longrightarrow V^{(d-1)}$, a $\beta$-section $z$ and a monic polynomial $f_{p^e}(z)$; and another $p$-presentation $p\P'$ defined by $\beta':V^{(d)}\longrightarrow V'^{(d-1)}$, a $\beta'$-section $z'$, and a  polynomial $f'_{p^e}(z')$.

Fix points $y\in V^{(d-1)}$, $y'\in V'^{(d-1)}$, and assume that:

\begin{enumerate}
\item[1)] $Sl(p\P)(y)>0$ and $Sl(p\P')(y')>0$.

\item[2)] There is a point $q\in V^{(d)}$ which is the unique point mapping to both (see \ref{rkcrit}). Namely, $\beta(q)=y$ and  $\beta'(q)=y'$.

\end{enumerate}

Then,
$Sl(p\P)(y)\geq 1$ if and only if $Sl(p\P')(y)\geq 1.$
In fact, this condition holds when both $y$ and $y'$ are image of a point $q\in \Sing(\G)$.
\end{corollary}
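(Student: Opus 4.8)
The plan is to deduce Corollary~\ref{corol:2pres:geq1} directly from Proposition~\ref{prop36}~i) applied to each of the two $p$-presentations. By hypothesis (1) we have $Sl(p\P)(y)>0$ and $Sl(p\P')(y')>0$, so by Remark~\ref{rkcrit} the point $y$ (resp.\ $y'$) has a unique preimage in $V(\langle f_{p^e}\rangle)$ (resp.\ in $V(\langle f'_{p^e}\rangle)$), and hypothesis (2) tells us this preimage is the same point $q\in V^{(d)}$ in both cases. First I would invoke Proposition~\ref{prop36}~i) for the presentation $p\P$: since $Sl(p\P)(y)>0$ and $q$ is the unique point of $V(\langle f_{p^e}\rangle)$ over $y$, we get the equivalence
$$q\in\Sing(\G)\iff Sl(p\P)(y)\geq 1.$$
Then I would apply the same Proposition to $p\P'$ at $y'$, obtaining
$$q\in\Sing(\G)\iff Sl(p\P')(y')\geq 1.$$
Chaining these two equivalences through the common middle term $q\in\Sing(\G)$ yields $Sl(p\P)(y)\geq 1\iff Sl(p\P')(y')\geq 1$, which is the assertion (I note the statement writes $Sl(p\P')(y)$ but clearly means $Sl(p\P')(y')$, since $y'$ is the point of $V'^{(d-1)}$).

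For the final sentence, suppose both $y$ and $y'$ are the image of a point $q\in\Sing(\G)$, i.e.\ $\beta(q)=y$, $\beta'(q)=y'$ and $\ord(\G)(q)\geq 1$. By Remark~\ref{rmk45}~1) (to be proved as Proposition~\ref{prop36}~i), or rather the discussion around it), choosing the $\beta$-transversal section $z$ of order one at $\calo_{V^{(d)},q}$ guarantees that all the conditions in \ref{rkcrit}(d) hold at $y=\beta(q)$ and that $Sl(p\P)(y)\geq 1>0$; symmetrically $Sl(p\P')(y')\geq 1>0$. Thus hypotheses (1) and (2) of the Corollary are automatically satisfied in this situation, and moreover $q\in\Sing(\G)$, so the equivalence forces both slopes to be $\geq 1$; this is consistent and shows the hypotheses are non-vacuous.

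The only mild subtlety — and the step I would treat most carefully — is the verification of hypothesis (2) in the general statement: one must be sure that the \emph{same} geometric point $q$ of $V^{(d)}$ is playing the role of the distinguished preimage for both projections. This is not automatic from the slope positivity alone; it is part of the hypothesis, but one should check that it is compatible with Proposition~\ref{prop36}~ii), which asserts that $\beta^{-1}(y)\cap\Sing(\G)$ (resp.\ $\beta'^{-1}(y')\cap\Sing(\G)$) is a single point whenever it is non-empty. In the situation of the last sentence, where $q\in\Sing(\G)$ is given, Proposition~\ref{prop36}~iia) identifies that single point with the unique point of $V(\langle f_{p^e}\rangle)$ over $y$, so everything matches and no ambiguity remains. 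Apart from this bookkeeping, the proof is a two-line application of Proposition~\ref{prop36}~i) twice.
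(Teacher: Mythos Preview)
Your proposal is correct and follows exactly the intended approach: the paper states this result as a corollary immediately after Proposition~\ref{prop36} without supplying a separate proof, precisely because it follows by applying part~i) of that proposition once to each $p$-presentation and chaining the two equivalences through the common condition $q\in\Sing(\G)$. Your handling of the final sentence and the bookkeeping around hypothesis~(2) is also accurate.
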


\begin{parrafo} \label{rdld}
In what follows we fix the simple algebra $\G$ on a smooth scheme $V^{(d)}$, together with a transversal morphism $\beta:V^{(d)}\longrightarrow V^{(d-1)}$, and define different $p$-presentations of the form $p\P(\beta,  z,  f_{p^e}(z))$,
(for different choices of sections $z$).

Let us denote by $\mathcal{F}(\G, \beta)$ the set of all such $p$-presentations. Namely,
$$\mathcal{F}(\G, \beta)=\big\{ p\P(\beta,  z,  f_{p^e}(z))\ \hbox{for which (\ref{eqdpl}) holds}\big\}$$

There is a natural notion of restriction on local presentations. Let 
${\U}^{(d-1)}$ be an open subset in $V^{(d-1)}$, and set ${\U}^{(d)}$ as the inverse image of ${\U}^{(d-1)}$. There is a natural restriction of $\G$ , say $\G|_{\U}$, of $\beta$, say ${\beta}|_\U:{\U}^{(d)}\longrightarrow {\U}^{(d-1)}$,
and of the $p$-presentation $p\P$, so that (\ref{eqdpl}) holds at the restriction. For each open ${\U}^{(d-1)}\subset {V}^{(d-1)}$, we take all $p$-presentations $\mathcal{F}(\G|_\U, \beta|_\U)$. 

Finally, fix a point $y\in {V}^{(d-1)}$, and set
$$\mathcal{F}(\G, \beta, y)=\bigcup \mathcal{F}(\G|_\U, \beta|_\U),$$ 
where the union is over all restrictions ${\U}^{(d-1)}\subset {V}^{(d-1)}$ containing $y$.

\end{parrafo}

\begin{definition}\label{def:z:adap:x} 
Fix $\beta:V^{(d)}\longrightarrow V^{(d-1)}$ and $\G$ as in \ref{pq}.
Define the \emph{$\beta$-order at } $y\in V^{(d-1)}$ as
\begin{equation}\label{eq:def:v_ord} 
\beta-ord^{(d-1)}(\G)(y)=\!\!\!\!\!\!\max_{p\P\in \mathcal{F}(\G, \beta, y) }\!\!\!\!\big\{ Sl(p\P)(y) \}.
\end{equation}

\end{definition}

\section{Well-adapted $p$-presentations.}\label{sec555}

\begin{parrafo}\label{rpcasos}Assume that $\beta-ord^{(d-1)}(\G)(y)>0$, and let $p\P$ be a $p$-presentation involving $\beta$. Here we sketch a criteria which will allow us to decide when, for a given point $y\in V^{(d-1)}$, a $p$-presentation $p\P$ is such that $\beta-ord^{(d-1)}(\G)(y)=Sl(p\P)(y)$.  So well-adapted $p$-presentations at a singular point  will ultimately be giving us the value of the inductive function at such point (see Corollary \ref{def_vord}).

The starting point of this discussion grows from the observation that when $Sl(p\P)(y)>0$, the following cases can occur:
\begin{enumerate}
\item[A)] $Sl(p\P)(y)=\ord(\R_{\G,\beta})(y)$

\item[B)] $Sl(p\P)(y)=\frac{\nu_y(a_{p^e})}{p^e}<\ord(\R_{\G,\beta})(y)$ (see Theorem \ref{rmk:x:restr}), and

\begin{enumerate}
\item[B1)]  $ \frac{\nu_y(a_{p^e})}{p^e} \notin \mathbb Z_{>0}$.

\item[B2)]  $ \frac{\nu_y(a_{p^e})}{p^e} \in \mathbb Z_{>0}$ and $\In_y(a_{p^e})$ is not a $p^e$-th power at $\Gr_y(\calo_{V^{(d-1)},y})$.

\item[B3)]  $ \frac{\nu_y(a_{p^e})}{p^e} \in \mathbb Z_{>0}$ and $\In_y(a_{p^e})$ is  a $p^e$-th power at $\Gr_y(\calo_{V^{(d-1)},y})$.
\end{enumerate}

\end{enumerate}
We shall prove that a new $p$-presentation $p\P'$ can be defined with the condition $Sl(p\P')(y)>Sl(p\P)(y)$, only in case B3). This leads to the \emph{cleaning process} developed in Proposition \ref{prpri}. 

This cleaning process relies on suitable changes of the transversal section $z$. The finiteness of this process will be address in Remark \ref{rk211}. In Proposition \ref{adap:mon}  we show that these changes of $z$, in this cleaning process, can be done so as to be compatible with the notion of monomial contact; a property that will be used in the proof of Main Theorem 2.

Proposition \ref{simult:adap} will be useful in the study of $p$-presentations and its compatibility with monomial transformations.
\end{parrafo}

\begin{parrafo}\label{cpll}

Let $p\P=p\P(\beta,z,f_{p^e}(z))$  be a $p$-presentation and fix $y\in V^{(d-1)}$. Suppose $Sl(p\P)(y)>0$.
We study changes of the $p$-presentation $p\P$ obtained by changing the $\beta$-section $z$  by another of the form $ uz+\alpha$. Here $u$ and $\alpha$ are in $\calo_{V^{(d-1)},y}$ and  $u$ is a unit, so the change is a composition of $z_1=u z$ and $z_2=z+\alpha$. The function $u$ is a unit (invertible) at any point in an open neighborhood of $y$, say $ {\U}^{(d-1)}$. This is to be interpreted as a new $p$-presentation, defined  at the restriction of both $\G$ and $ V^{(d)}$ over $ \U^{(d)}=\beta^{-1}(\U^{(d-1)})$ as in \ref{rdld}.

For a change of the form $z_1=u z$,  set  $p\P_1$ with
$$f'_{p^e}(z_1)=u^{p^e}f_{p^e}(z)=z_1^{p^e}+ua_1z_1^{p^e-1}+\dots+u^{p^e}a_{p^e}\in  \calo_{V^{(d-1)},y}[z_1] . $$

Clearly, $Sl(p\P)(y)=Sl(p\P_1)(y)$ and also Cases A), B1), B2), and B3) in \ref{rpcasos} are preserved. Henceforth we study only changes of the form $z^{\prime}=z+\alpha$.

 At $\calo_{V^{(d-1)},y}[z] =\calo_{V^{(d-1)},y}[z^{\prime}]$,
\begin{equation}\label{1ecfin}
f_{p^e}(z)=f^{\prime}_{p^e}(z^{\prime})={z^{\prime}}^{p^e}+a^{\prime}_1{z^{\prime}}^{p^e-1}+\dots+a^{\prime}_{p^e}\in  \calo_{V^{(d-1)},y}[z^{\prime}] , \mbox{ and }
\end{equation} 
\begin{equation}\label{ecfin}
a^{\prime}_{p^e}= \alpha^{p^e}+a_1\alpha^{p^e-1}+\dots+a_{p^e}.\
\end{equation}
Define, as before, a new presentation, say $p\P'$, with these data at a suitable restriction to a neighborhood of $y$.
\end{parrafo}

\begin{proposition} \label{prpri} {\bf (Cleaning process)}. Fix the setting and notation as above, where the function $\beta-ord^{(d-1)}(\G)(y) >0$ and where $p\P$ is such that $Sl(p\P)(y)>0$.

Assume that $Sl(p\P)(y)=\displaystyle \frac{\nu_y(a_{p^e})}{p^e}< \ord(\R_{\G,\beta})(y)$. There will be a change of the form $z'= z+ \alpha$, defining a new presentation $p\P'$ as in \emph{\ref{cpll}}, so that 
$$Sl(p\P)(y)<Sl(p\P')(y)\ \hbox{ if and only if case \emph{B3)} holds in {\rm\ref{rpcasos}} for }p\P.$$
\end{proposition}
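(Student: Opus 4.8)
The plan is to analyze the change of section $z' = z + \alpha$ through its effect on the constant term $a'_{p^e}$ given by the formula (\ref{ecfin}), and to show that the slope can only be strictly raised in case B3), where there is a genuine "$p^e$-th power" obstruction to remove. First I would reduce to understanding $\nu_y(a'_{p^e})$, since by Theorem \ref{rmk:x:restr} the slope of $p\P'$ is $\min\{\nu_y(a'_{p^e})/p^e,\ \ord(\R_{\G,\beta})(y)\}$, and the hypothesis is that the old slope $s := \nu_y(a_{p^e})/p^e$ is strictly less than $\ord(\R_{\G,\beta})(y)$. Thus $Sl(p\P')(y) > Sl(p\P)(y)$ holds if and only if one can choose $\alpha \in \calo_{V^{(d-1)},y}$ so that $\nu_y(a'_{p^e}) > \nu_y(a_{p^e})$ (and $\alpha$ can then be shrunk, if necessary, so the min is still governed by the elimination algebra, since the gap to $\ord(\R_{\G,\beta})(y)$ is strict). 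The question therefore becomes: when can $\alpha^{p^e} + a_1\alpha^{p^e-1} + \dots + a_{p^e}$ be made to have order strictly bigger than $\nu_y(a_{p^e})$?

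Next I would look at the leading behavior. For this it is natural to assume $\alpha \ne 0$ with $\nu_y(\alpha) = \delta \geq 0$; then $\nu_y(\alpha^{p^e}) = p^e\delta$, while $\nu_y(a_j \alpha^{p^e-j}) \geq \nu_y(a_j) + (p^e - j)\delta$. Using that $Sl(p\P)(y) = s$ is the minimum over $j$ of $\nu_y(a_j)/j$ — so $\nu_y(a_j) \geq js$ for all $j$, with equality forced at $j = p^e$ — one checks that for the cross terms $a_j\alpha^{p^e-j}$ with $1 \leq j \leq p^e - 1$ one gets order $\geq js + (p^e-j)\delta$. A short case analysis on $\delta$ versus $s$ shows: if $\delta \ne s$ then the three groups of terms ($\alpha^{p^e}$, the cross terms, and $a_{p^e}$) have pairwise distinct orders or the minimum is still $\leq p^e s$, so no cancellation past order $p^e s$ occurs; hence one needs exactly $\delta = s$, which forces $s \in \Z_{>0}$. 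Thus cases B1) and B2) already impose that $s = \nu_y(a_{p^e})/p^e$ be an integer, so B1) is excluded outright, and in B2)/B3) we must have $\delta = s$ and compare the \emph{initial forms} at $\Gr_y(\calo_{V^{(d-1)},y})$.

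The heart of the argument is then a computation in the graded ring $\Gr_y := \Gr_y(\calo_{V^{(d-1)},y})$, which is a polynomial ring over $k(y)$. Writing $\In_y$ for initial forms in degree $p^e s$, the part of $a'_{p^e}$ of order $\leq p^e s$ is, by the order estimates, controlled by $\In_y(\alpha)^{p^e} + \In_y(a_{p^e})$ — here one uses that in characteristic $p$ the Frobenius is additive, $\In_y(\alpha^{p^e}) = \In_y(\alpha)^{p^e}$, and that the cross terms $a_j\alpha^{p^e-j}$ contribute strictly higher order except possibly when their orders coincide with $p^e s$; a careful bookkeeping (using the \emph{strict} inequality $\nu_y(a_j)/j < s$ is impossible, so $\nu_y(a_j) \geq js$, and equality at intermediate $j$ would by Theorem \ref{rmk:x:restr} force the old slope to equal $\ord(\R_{\G,\beta})(y)$, contradicting the hypothesis) shows the intermediate cross terms are in fact of order strictly $> p^e s$. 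Therefore $\nu_y(a'_{p^e}) > p^e s$ can be achieved if and only if there exists $g \in \Gr_y$ homogeneous of degree $s$ with $g^{p^e} = -\In_y(a_{p^e})$ — i.e. exactly when $\In_y(a_{p^e})$ is a $p^e$-th power in $\Gr_y$, which is case B3); and in that case picking any $\alpha$ with $\In_y(\alpha) = g$ (then shrinking $\alpha$ to keep the min on the elimination-algebra side, using the strict gap) produces the desired $p\P'$. Conversely in B2) no such $g$ exists so the order of $a'_{p^e}$ cannot exceed $p^e s$ for any $\alpha$, and combined with the $\delta \ne s$ analysis, $Sl(p\P')(y) \leq Sl(p\P)(y)$ always; in case A) the slope already equals $\ord(\R_{\G,\beta})(y)$ and no change of $z$ can exceed the upper-semicontinuous bound $\ord(\R_{\G,\beta})(y)$, which is unaffected by changing the section.

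\textbf{Main obstacle.} The delicate point is the bookkeeping showing the intermediate cross terms $a_j \alpha^{p^e - j}$ ($1 \le j \le p^e-1$) never interfere at degree $p^e s$: this is where the hypothesis $Sl(p\P)(y) = \nu_y(a_{p^e})/p^e < \ord(\R_{\G,\beta})(y)$ (not merely $=\min\{\cdots\}$) is essential, since it is precisely what rules out $\nu_y(a_j)/j = s$ for $j < p^e$ via Theorem \ref{rmk:x:restr}, forcing those terms into strictly higher order. Getting that exclusion exactly right, together with the characteristic-$p$ fact that taking $p^e$-th powers is additive on initial forms, is the crux; the rest is the routine min/order algebra already sketched.
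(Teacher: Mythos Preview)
Your proposal is correct and follows essentially the same approach as the paper: both reduce, via Theorem \ref{rmk:x:restr}, to analyzing $\nu_y(a'_{p^e})$ under the change $z' = z + \alpha$, use the strict inequalities $\nu_y(a_j)/j > \nu_y(a_{p^e})/p^e$ for $j < p^e$ (forced by the hypothesis $Sl(p\P)(y) < \ord(\R_{\G,\beta})(y)$ together with Theorem \ref{rmk:x:restr}) to show the cross terms $a_j\alpha^{p^e-j}$ have order strictly exceeding $\nu_y(a_{p^e})$, and then conclude by comparing the initial forms $\In_y(\alpha)^{p^e}$ and $\In_y(a_{p^e})$ in $\Gr_y$.

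Two minor remarks: your ``shrinking $\alpha$'' comment is unnecessary and slightly confusing --- once $\nu_y(a'_{p^e}) > \nu_y(a_{p^e})$, the new slope automatically exceeds the old one since both entries of the min do, so no adjustment is needed; and your final sentence about case A) is extraneous, as the hypothesis of the proposition already places us strictly in case B). The paper's proof additionally verifies directly that $\nu_y(a'_n)/n > \nu_y(a_{p^e})/p^e$ for $n < p^e$, whereas you (legitimately) shortcut this by reapplying Theorem \ref{rmk:x:restr} to $p\P'$.
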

\begin{proof}Theorem \ref{rmk:x:restr} ensures that if  $Sl(p\P)(y)=\displaystyle\frac{\nu_y(a_{p^e})}{p^e}< \ord(\R_{\G,\beta})(y)$, then
\begin{equation}
\frac{\nu_y(a_{p^e})}{p^e}< \frac{\nu_y(a_{i})}{i} \mbox{ for } i=1, \dots , p^e-1.
\end{equation}
Set $z'=z+\alpha$ as above. If $\nu_y(\alpha)< \frac{\nu_y(a_{p^e})}{p^e}$ then the previous inequalities applied to (\ref{ecfin}) 
show that $\nu_y(a^{\prime}_{p^e})=\nu_y(\alpha^{p^e}),$
so $Sl(p\P')(y)<Sl(p\P)(y)$. 

Assume that $\nu_y(\alpha)\geq  \frac{\nu_y(a_{p^e})}{p^e}.$
For each summand in (\ref{ecfin}) of the form $a_i\alpha^{p^e-i}$, $i=1, \dots , p^e-1$,
\begin{align}
\begin{aligned}
\nu_y(a_i\alpha^{p^e-i})&=(p^e-i)\nu_y(\alpha)+\nu_y(a_i)>\\
&> (p^e-i)\nu_y(\alpha)+ i  \frac{\nu_y(a_{p^e})}{p^e}
 \geq (p^e-i) \frac{\nu_y(a_{p^e})}{p^e} + i \frac{\nu_y(a_{p^e})}{p^e} =\nu_y(a_{p^e}).
\end{aligned}
\end{align}
Therefore (\ref{ecfin}) can be expressed as
\begin{equation}\label{nls}
 a^{\prime}_{p^e}= \alpha^{p^e}+A+a_{p^e},
 \end{equation}
where $\nu_y(\alpha^{p^e})\geq \nu_y(a_{p^e})$ and $\nu_y(A)>\nu_y(a_{p^e})$.

On the other hand, 
$$a'_n=\Delta^{(p^e-n)}(f_{p^e})(\alpha)=c_1\alpha^{n-1}a_1+\dots+c_{n-1}\alpha a_{n-1}+a_n,$$
where $c_j\in k$ for $j=1,\dots,n-1$. 

For each summand of the form $a_j\alpha^{n-j}$, $j=1, \dots , n$,
\begin{align}
\begin{aligned}
\nu_y(a_j\alpha^{n-j})&=(n-j)\nu_y(\alpha)+\nu_y(a_j)>\\
&>(n-j)\nu_y(\alpha)+ j  \frac{\nu_y(a_{p^e})}{p^e}
 \geq (n-j) \frac{\nu_y(a_{p^e})}{p^e} + j \frac{\nu_y(a_{p^e})}{p^e} =\frac{n \nu_y(a_{p^e})}{p^e}.
\end{aligned}
\end{align}
In particular, $\frac{\nu_y(a'_n)}{n}>\frac{\nu_y(a_{p^e})}{p^e}.$

One can easily check now that if B1) holds, then $f^{\prime}_{p^e}(z')={z^{\prime}}^{p^e}+a^{\prime}_1{z^{\prime}}^{p^e-1}+\dots+a^{\prime}_{p^e}$ in 
(\ref{1ecfin}) is also in case B1), and $Sl(p\P)(y)=Sl(p\P')(y)$. 

The same arguments apply if B2) holds, namely $f^{\prime}_{p^e}(z')$ is also in case B2), and $Sl(p\P)(y)=Sl(p\P')(y)$.

On the contrary, in case B3) it suffices to choose $\alpha$ so that 
$\nu_y(\alpha^{p^e}+a_{p^e})> \nu_y(a_{p^e})$ to get $Sl(p\P)(y)<Sl(p\P')(y).$
\end{proof}

\begin{remark}\label{rmk44pe} In Proposition \ref{prpri} we assumed that $\beta-ord^{(d-1)}(\G)(y)>0$, and $p\P$ was such that $Sl(p\P)(y)>0$. 
Suppose that $\beta-ord(\G)(y)>0$ and $Sl(p\P)(y)=0$. 
In this case, $\nu_y( a_{p^e})=0$ and
$$\overline{f_{p^e}(z)}=Z^{p^e}+\overline{a}_{p^e} \in k(y)[Z].$$
Namely, $\overline{a}_i=0$, $i=1, \dots , p^e-1$ (see Theorem \ref{rmk:x:restr}).

If $Z^{p^e}+\overline{a}_{p^e}= (Z+ \delta)^{p^e} \in k(y)[Z]$
for some $\delta \in k(y)$, then a change of the form $z^{\prime}=z+\alpha$, where $\alpha \in \calo_{V^{(d-1)}, y}$ maps to $\delta$ in $k(y)$, will define a new presentation, say $p\P'$, and $Sl(p\P')(y)>0$. This is always the case when $y$ is the image of a point $x\in \Sing(\G)$ (see Proposition \ref{prop36} iib)).
\end{remark}

\begin{definition}\label{defweladap}
Let $p\P$ be a $p$-presentation. We say that $p\P$ is \emph{well-adapted to $\G$ at $y\in V^{(d-1)}$} whenever one of the two cases holds:
\begin{enumerate}
\item[(1)] $Sl(p\P)(y)>0$ and either case A), case B1) or case B2) in \ref{rpcasos} holds,

\item[(2)]  $Sl(p\P)(y)=0$ and 

\begin{enumerate}
\item[A)] $\ord(\R_{\G,\beta})(y)=0$, or
\item[B2)] $\nu_y(a_{p^e})=0<\ord(\R_{\G,\beta})(y)$ and $\overline{a}_{p^e}\in k(y)$ is not a $p^e$-th power.
\end{enumerate}
\end{enumerate}
\end{definition}

\begin{remark}\label{rmk56}
Fix $q\in\Sing(\G)$ and a $p$-presentation $p\P=p\P(\beta,z,f_{p^e}(z))$ which is well-adapted to $\beta(q)$, then
 $Sl(p\P)(\beta(q))\geq 1$ (Proposition \ref{prop36}), and 
 $z$ is an element of order one in $\calo_{V^{(d)},q}$ (Remark \ref{rmk45}).
\end{remark}

\begin{remark}\label{rk211} {\bf Finiteness of the cleaning process}.

When Case B3) occurs,  $\nu_y(a_{p^e})=\ell p^e$ for some integer $\ell\geq 1$, and $$\In_y(a_{p^e})= F^{p^e}$$ for some homogenous polynomial $F$ of degree $\ell$ at $\Gr_y(\calo_{V^{(d-1)}})$.
In this case, we define $z^{\prime}=z+\alpha$ for some
$\alpha \in \calo_{V^{(d-1)},y}$ such that $\In_y(\alpha)=F$. Thus 
$Sl(p\P)(y)<Sl(p\P')(y)$.

If this new presentation $p\P'=p\P'(\beta,z',f'_{p^e}(z'))$ is within Case A), B1) or B2) then stop. If, on the contrary, $f^{\prime}_{p^e}(z')$ is in case B3), then
\begin{enumerate}
\item[i)] $\nu_y(a^{\prime}_{p^e})=\ell^{\prime} p^e$ (with $\ell^{\prime}>\ell)$, and 

\item[ii)] $\In_y(a^{\prime}_{p^e})= (F^{\prime})^{p^e}$ for some homogeneous element $F^{\prime}$ of degree $\ell^{\prime}$ at $\Gr_y(\calo_{V^{(d-1)},y})$.
\end{enumerate}

So again we can set $z^{\prime \prime}=z^{\prime }+\alpha^{\prime}$ for some
$\alpha^{\prime } \in \calo_{V^{(d-1)},y}$ with $\In_y(\alpha^{\prime })=F^{\prime }$; and
$Sl(p\P^{\prime })(y)<Sl(p\P^{\prime \prime })(y)$. This shows that with this procedure of modification of the transversal section, locally over $y$, the slope will increase every time we come to Case B3). Finally, 
Remark \ref{rk32} guarantees that Case B3) can arise only finitely many times throughout this procedure. So ultimately the procedure leads to a well-adapted $p$-presentation.
\end{remark}

\begin{proposition}\label{adap:mon}
Assume that $p\P$ is compatible with a monomial algebras $\calo_{V^{(d)}}[\m W^s]$ as in \ref{defmon}. Then the cleaning process to obtain a well-adapted $p$-presentation at a point  preserves the compatibility with $\calo_{V^{(d)}}[\m W^s]$.
\end{proposition}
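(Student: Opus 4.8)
The plan is to track exactly which quantities enter the definition of compatibility (Definition~\ref{defmon}) and verify that each step of the cleaning process in Proposition~\ref{prpri} respects them. Recall that a $p$-presentation $p\P(\beta,z,f_{p^e}(z))$ with $f_{p^e}(z)=z^{p^e}+a_1z^{p^e-1}+\dots+a_{p^e}$ is compatible with $\calo_{V^{(d)}}[\m W^s]$ at $y$ precisely when (i) $(\R_{\G,\beta})_r\subset\calo_{V^{(d-1)}}[\m W^s]$ and (ii) $a_iW^i\in\calo_{V^{(d-1)}}[\m W^s]$ for $1\le i\le p^e$. Condition (i) involves only $\beta$ and the elimination algebra, neither of which is touched by the cleaning process (we only change the section $z$, not the projection), so (i) is automatically preserved. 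Hence the whole content of the proposition reduces to checking that condition (ii) survives each elementary move $z\rightsquigarrow uz$ and $z\rightsquigarrow z+\alpha$.

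The first reduction is to note, as in \ref{cpll}, that the change $z_1=uz$ with $u$ a unit replaces $a_i$ by $u^ia_i$; since $u$ is invertible on a neighborhood of $y$, $(u^ia_i)W^i$ and $a_iW^i$ generate the same ideal locally, so compatibility (ii) is trivially preserved. It remains to handle a translation $z'=z+\alpha$. Here the new coefficients are $a'_n=\Delta^{(p^e-n)}(f_{p^e})(\alpha)=c_1\alpha^{n-1}a_1+\dots+c_{n-1}\alpha a_{n-1}+a_n$ (with $c_j\in k$), so $a'_n$ is a $k$-linear combination of terms $\alpha^{n-j}a_j$ with $1\le j\le n$. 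To conclude $a'_nW^n\in\calo_{V^{(d-1)}}[\m W^s]$ from the hypothesis $a_jW^j\in\calo_{V^{(d-1)}}[\m W^s]$, it suffices that $\alpha^{n-j}a_jW^n=(\alpha W)^{n-j}\cdot(a_jW^j)$ lies in the monomial algebra, which will follow once we know $\alpha W\in\calo_{V^{(d-1)}}[\m W^s]$, i.e.\ $\alpha$ is divisible (in the weighted sense of Remark~\ref{rmkintM}) by $\m$ at degree $s$.

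So the crux — and I expect this to be the main obstacle — is to verify that in each instance where the cleaning process of Proposition~\ref{prpri} (equivalently Remark~\ref{rk211}) actually performs a translation $z'=z+\alpha$, the element $\alpha$ can be chosen inside $\calo_{V^{(d-1)}}[\m W^s]$ at the appropriate degree. In Case B3) one has $\nu_y(a_{p^e})=\ell p^e$ and $\In_y(a_{p^e})=F^{p^e}$ for a homogeneous $F$ of degree $\ell$, and one sets $\In_y(\alpha)=F$. Since $a_{p^e}W^{p^e}\in\calo_{V^{(d-1)}}[\m W^s]$ by hypothesis, writing $\m=I(H_1)^{h_1}\cdots I(H_r)^{h_r}$ one gets that $\m^{p^e}$ divides $a_{p^e}^s$, hence (comparing exponents of each $I(H_j)$, which are principal along $H_j$, and using that $F^{p^e}$ is the initial form) $\m$ divides $\alpha^s$, i.e.\ $\alpha W\in\calo_{V^{(d-1)}}[\m W^s]$; here one uses that the $H_j$ have normal crossings so the divisibility can be checked exponent-by-exponent, and that a $p^e$-th power is divisible by $\m^{p^e}$ iff its $p^e$-th root is divisible by $\m$. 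One must also cover the preliminary translation of Remark~\ref{rmk44pe} used when $Sl(p\P)(y)=0$ to make the slope positive: there $\In_y(\alpha)$ realizes $\delta\in k(y)$ with $Z^{p^e}+\overline a_{p^e}=(Z+\delta)^{p^e}$, and since $\nu_y(a_{p^e})=0$ that translation is by a unit times something already controlled, or one simply absorbs it as the starting point of the process; in any case the same divisibility argument applies. Finally, since by Remark~\ref{rk211} only finitely many translations occur and each is by an $\alpha$ with $\alpha W\in\calo_{V^{(d-1)}}[\m W^s]$, the composite change of section preserves both (i) and (ii), so the resulting well-adapted $p$-presentation is still compatible with $\calo_{V^{(d)}}[\m W^s]$.
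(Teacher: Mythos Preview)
Your proof is correct and follows the same approach as the paper's: the key point in both is that the translation element $\alpha$ in the cleaning step can be \emph{chosen} so that $\alpha W\in\calo_{V^{(d-1)}}[\m W^s]$ (since $a_{p^e}W^{p^e}\in\m W^s$ forces the initial form $F$ to carry the requisite divisibility by the normal-crossings monomial, and one then lifts $F$ compatibly), after which $a'_nW^n=(\alpha W)^{n-j}(a_jW^j)$-combinations remain in $\m W^s$. The paper's own argument is essentially a one-sentence assertion of this fact, so your expansion fills in the details faithfully; just be careful to phrase the crux as ``$\alpha$ can be chosen with $\alpha W\in\m W^s$'' rather than ``$\m$ divides $\alpha^s$'' as if automatic, since an arbitrary lift of $F$ need not inherit the divisibility along each $H_j$.
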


\begin{proposition}{\bf Simultaneous adaptation}\label{simult:adap}. 

Let $p\P=p\P(\beta,  z,  f_{p^e}(z))$
be a $p$-presentation compatible with a monomial algebra $\m W^s$. Let $y$ and $x$ be points in $V^{(d-1)}$, so that $x\in C= \overline{y}$, and assume that $\calo_{C,x}$ is regular. 
Then,
\begin{enumerate}
\item[A)] It can be assumed that $p\P$ is well-adapted to $\G$ at $y$, defined in a neighborhood of $x$, and compatible with $\m W^s$.

\item[B)] There is a $p$-presentation which is well-adapted to $\G$ both at $y$ and $x$, and also compatible with $\m W^s$. 
\end{enumerate}
\end{proposition}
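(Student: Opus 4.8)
The plan is to obtain both statements from the cleaning process of Proposition~\ref{prpri} (which terminates by Remark~\ref{rk211} and Remark~\ref{rmk44pe}) together with Proposition~\ref{adap:mon}; the only new ingredient is a careful choice of the sections used in the cleaning, so that the resulting functions stay defined near $x$ and the orders at $y$ remain controlled. The key tool throughout is the regularity of $\calo_{C,x}$: since $C=\overline{y}$ is smooth at $x$ inside the smooth scheme $V^{(d-1)}$, we may fix a regular system of parameters $\{y_1,\dots,y_{d-1}\}$ of $\calo_{V^{(d-1)},x}$ with $\p_y=(y_1,\dots,y_c)$, where $\p_y$ is the prime of $y$. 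Then $\calo_{V^{(d-1)},y}=(\calo_{V^{(d-1)},x})_{\p_y}$, $\Gr_y(\calo_{V^{(d-1)},y})=k(y)[\bar Y_1,\dots,\bar Y_c]$ with $\bar Y_i=\In_y(y_i)$, and $\m_y^\ell/\m_y^{\ell+1}\cong(\p_y^\ell/\p_y^{\ell+1})\otimes_{\calo_{C,x}}k(y)$ with $\p_y^\ell/\p_y^{\ell+1}\cong\calo_{C,x}[\bar Y]_\ell$. In particular an element of $k(y)[\bar Y]_\ell$ equals $\In_y(\alpha)$ for some $\alpha\in\p_y^\ell\subset\calo_{V^{(d-1)},x}$ — hence a function defined near $x$ — precisely when its coefficients lie in $\calo_{C,x}$.

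For part A) we run the cleaning of Proposition~\ref{prpri} at $y$: by Remarks~\ref{rk211} and~\ref{rmk44pe} it terminates in a $p$-presentation well-adapted to $\G$ at $y$, and by Proposition~\ref{adap:mon} it remains compatible with $\m W^s$. Each step is a change $z'=z+\alpha$ whose defining datum is a $p^e$-th root: either $\In_y(\alpha)=F$ with $F^{p^e}=\In_y(a_{p^e})$ (Case B3) of~\ref{rpcasos}), or $\bar\alpha=\delta$ with $\delta^{p^e}=\overline{a_{p^e}}$ in $k(y)$ (Remark~\ref{rmk44pe}); the unit changes $z\mapsto uz$ of~\ref{cpll} alter neither the slope nor the case and may be discarded. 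Since $a_{p^e}$ is a function near $x$ we have $a_{p^e}\in\p_y^{\nu_y(a_{p^e})}$, so $\In_y(a_{p^e})$ has coefficients in $\calo_{C,x}$ and $\overline{a_{p^e}}\in\calo_{C,x}$; as $\calo_{C,x}[\bar Y]$ and $\calo_{C,x}$ are normal, the $p^e$-th root $F$ (resp.\ $\delta$) is integral over, hence lies in, that ring, and therefore lifts to some $\alpha\in\p_y^{\deg F}\subset\calo_{V^{(d-1)},x}$ (resp.\ some $\tilde\delta\in\calo_{V^{(d-1)},x}$) defined near $x$ and realizing the required datum. Carrying out the cleaning with such sections yields a $p$-presentation well-adapted at $y$, compatible with $\m W^s$ and defined in a neighbourhood of $x$.

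For part B) we may thus start from $p\P$ well-adapted at $y$, compatible with $\m W^s$ and defined near $x$, and now clean at $x$. If $p\P$ is already well-adapted at $x$ we are done. Otherwise, by Proposition~\ref{prpri} and Remarks~\ref{rk211},~\ref{rmk44pe}, the only moves are again changes $z'=z+\alpha$ with $\In_x(\alpha)=F_x$, $F_x^{p^e}=\In_x(a_{p^e})$ (or, in the $Sl(p\P)(x)=0$ subcase, $\bar\alpha=\delta_x$ with $\delta_x^{p^e}=\overline{a_{p^e}}$ in $k(x)$), each of which strictly raises $Sl(p\P)(x)$ or removes the bad $Sl=0$ subcase; the process terminates by Remark~\ref{rk32} and preserves compatibility with $\m W^s$ by Proposition~\ref{adap:mon}. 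The delicate point is to choose $\alpha$ without spoiling well-adaptedness at $y$. Since $a_{p^e}\in\p_y^{\nu_y(a_{p^e})}\subset\calo_{V^{(d-1)},x}$, writing $a_{p^e}$ as a combination of degree-$\nu_y(a_{p^e})$ monomials in $y_1,\dots,y_c$ shows that $\In_x(a_{p^e})\in\q^{\nu_y(a_{p^e})}$ for $\q=(\bar Y_1,\dots,\bar Y_c)\subset\Gr_x(\calo_{V^{(d-1)},x})$; because $\Gr_\q$ of that polynomial ring is again a domain, the $\q$-order is additive, so $v_\q(F_x)=v_\q(\In_x(a_{p^e}))/p^e\geq\lceil\nu_y(a_{p^e})/p^e\rceil\geq Sl(p\P)(y)$, with strict inequality whenever $\nu_y(a_{p^e})/p^e\notin\Z$ (in particular in Case B1)). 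Hence we may take $\alpha\in\p_y^{v_\q(F_x)}$ with $\In_x(\alpha)=F_x$, so that $\nu_y(\alpha)=v_\q(F_x)\geq Sl(p\P)(y)$.

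With this choice, the order estimates in the proof of Proposition~\ref{prpri} give $\nu_y(a'_{p^e})=\nu_y(a_{p^e})$ and $\In_y(a'_{p^e})=\In_y(\alpha)^{p^e}+\In_y(a_{p^e})$; as the Frobenius identity forces the sum of a $p^e$-th power and a non-$p^e$-th power to remain a non-$p^e$-th power, each of Cases A), B1), B2) of~\ref{rpcasos} at $y$, as well as the two well-adapted $Sl(p\P)(y)=0$ subcases, is preserved (when $\nu_y(\alpha)>Sl(p\P)(y)$ one even gets $\In_y(a'_{p^e})=\In_y(a_{p^e})$). In the remaining situation $Sl(p\P)(x)=0$ one has $Sl(p\P)(y)=0$ by Remark~\ref{rmk:Slxy} and $\nu_y(\alpha)=\nu_x(\alpha)=0$, and the same argument applied to $\overline{a'_{p^e}}=\bar\alpha^{p^e}+\overline{a_{p^e}}$ in $k(y)$ again preserves well-adaptedness. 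Iterating the cleaning at $x$ therefore yields a $p$-presentation well-adapted at both $y$ and $x$ and compatible with $\m W^s$, proving B). The main obstacle is precisely this last step — ensuring that improving the presentation at $x$ does not ruin it at $y$ — which rests on the inequality $v_\q(F_x)\geq Sl(p\P)(y)$ (a consequence of $a_{p^e}\in\p_y^{\nu_y(a_{p^e})}$ and of $\Gr_\q$ being a domain) together with the stability of ``not a $p^e$-th power'' under adding $p^e$-th powers; over a field of characteristic zero the monic polynomial has degree one and this difficulty disappears.
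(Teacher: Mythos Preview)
Your proof is correct and follows the same strategy as the paper: for A) you descend the $p^e$-th root needed in each cleaning step from $k(y)[\bar Y]$ to $\calo_{C,x}[\bar Y]$ using normality of the latter (the paper phrases this as ``$\In_\p(a_{p^e})$ is a $p^e$-th power iff $\In_y(a_{p^e})$ is''), and for B) you bound $\nu_y(\alpha)$ from below by $\lceil\nu_y(a_{p^e})/p^e\rceil$ via the $\q$-order, exactly as the paper does when it writes $\alpha\in\langle y_1,\dots,y_\ell\rangle^{\lceil n/p^e\rceil}$. One small imprecision: the equality $\nu_y(a'_{p^e})=\nu_y(a_{p^e})$ you assert need not hold in Case~A) at $y$ (there $\In_y(a_{p^e})$ may well be a $p^e$-th power and cancel against $\In_y(\alpha)^{p^e}$), but this is harmless since in Case~A) one only needs $\nu_y(a'_{p^e})/p^e\ge\ord(\R_{\G,\beta})(y)$, which your estimate already gives; in Cases~B1) and B2) your Frobenius argument does force equality, and here you are actually more explicit than the paper, which simply asserts ``the new presentation is still well-adapted at $y$'' after the order bound.
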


\begin{proof2}
Once we fix a $p$-presentation, say $p\P=p\P(\beta,z,f_{p^e})$ and a point $y\in V^{(d-1)}$, cleaning applies either in the case of Remark \ref{rmk44pe} or of Remark \ref{rk211}. In both cases, the condition is given by the fact that $In_y(a_{p^e})$ is a $p^e$-th power, cleaning consists in finding $\alpha\in\calo_{V^{(d)},y}$ so that $\big(In_y(\alpha)\big)^{p^e}=In_y(a_{p^e})$.

We face now the proof of Proposition \ref{simult:adap}. Fix a $p$-presentation $p\P$ locally defined at $\calo_{V^{(d-1)},x}$. Set $\p\subset\calo_{V^{(d-1)},x}$ the regular prime ideal corresponding to $y$ (so that localization at $\p$ is $\calo_{V^{(d-1)},y}$). Cleaning is necessary at $\calo_{V^{(d-1)},y}$ if and only if $In_y(a_{p^e})$ is a $p^e$-th power in $gr_y(\calo_{V^{(d-1)},y})$. Since $x$ is a smooth point at $\overline{y}$, then $gr_\p(\calo_{V^{(d-1)}})$ is a regular ring and $In_\p(a_{p^e})\in gr_\p(\calo_{V^{(d-1)},x})$ and by localization by passing from $[gr_\p(\calo_{V^{(d-1)}})]_0$ to the total quotient field, we get $In_y(a_{p^e})\in gr_y(\calo_{V^{(d-1)}})$. Hence $In_\p(a_{p^e})$ is a $p^e$-th power if and only if $In_y(a_{p^e})$ is a $p^e$-th power. This ensures that the element $\alpha$, used in the cleaning process at $y$, can be chosen to be an element in $\calo_{V^{(d-1)},x}$, and hence the cleaning process at $y$ can be done so as to obtain a new $p$-presentation with coefficients in $\calo_{V^{(d-1)},x}$.

Once we assume that $p\P$ is well-adapted at $y$, we want to study the ``adaptability'' of $p\P$ at $x$. Note here that the only case to be considered occurs when $Sl(p\P)(x)=\frac{\nu_x(a_{p^e})}{p^e}<\ord(\R_{\G,\beta})(x)$ and $In_x(a_{p^e})$ is a $p^e$-th power. Here, we prove, under this last assumption, that the cleaning process at $x$ can be done without affecting the fact that the presentation is already well-adapted at $y$. 

Consider a regular system of parameters $\{y_1,\dots,y_\ell,y_{\ell+1},\dots,y_{d-1}\}$ at $\calo_{V^{(d-1)},x}$ so that $\p=\id{y_1,\dots,y_\ell}$. There are two cases to consider, case $Sl(p\P)(y)=\ord(\R_{\G,\beta})(y)$ and case $Sl(p\P)(y)=\frac{\nu_y(a_{p^e})}{p^e}<\ord(\R_{\G,\beta})(y)$. Assume that the latter case holds and set $\frac{\nu_y(a_{p^e})}{p^e}=\frac{n}{p^e}$. Theorem \ref{rmk:x:restr} says that $\frac{\nu_y(a_{p^e})}{p^e}<\frac{\nu_y(a_{j})}{j}$ for $j=1,\dots,p^e-1$.

At the completion, $a_{p^e}$ is a sum of monomials of the form $y_1^{\alpha_1}\dots y_\ell^{\alpha_\ell}y_{\ell+1}^{\alpha_{\ell+1}}\dots y_{d-1}^{\alpha_{d-1}}$ with $\alpha_1+\dots+\alpha_\ell\geq n$. We can identify $In_x(a_{p^e})$ with a sum of some of these terms. If there is an element $\alpha\in\calo_{V^{(d-1)},x}$ so that $\big(In_x(\alpha)\big)^{p^e}=-In_x(a_{p^e})$, then $\alpha\in\id{y_1,\dots,y_\ell}^{\lceil \frac{n}{p^e}\rceil}$ and, in particular, $\nu_y(\alpha)\geq \frac{n}{p^e}$

A change of variables of the form $z\mapsto z+\alpha$ produces a new independent coefficient of the form:
$$a'_{p^e}=\alpha^{p^e}+a_1\alpha^{p^e-1}+\dots+ a_{p^e},$$
where $\nu_y(a_j\alpha^{p^e-j})>j\cdot \frac{n}{p^e}+(p^e-j)\frac{n}{p^e}=n$, for $j=1,\dots,p^e-1$; so $\nu_y(a'_{p^e})\geq \min\{\nu_y(\alpha^{p^e}),\nu_y(a_{p^e}))\}\geq n$ and the new presentation is still well-adapted at $y$.

In case $Sl(p\P)(y)=\ord(\R_{\G,\beta})(y)=\frac{n}{s}$, the same arguments leads to the existence of $\alpha\in\calo_{V^{(d-1)}}$ so that $\nu_y(\alpha)=\frac{n}{s}$ which again ensures that the change of variables $z\mapsto z+\alpha$ (needed for the cleaning process at $x$) does not affect the slope at $y$.

To prove Proposition \ref{adap:mon} just notice that such change can be achieved with $\alpha W\in\calo_{V^{(d-1)}}[\m W^s]$. Similar arguments as before applies here to show that the new coefficients $a'_nW^n\in\calo_{V^{(d-1)}}[\m W^s]$ ($n=1,\dots,p^e$).
\end{proof2}

\section{{Transformations of $p$-presentations.}}\label{sec666} 

\begin{parrafo} In the previous sections some invariants were defined in terms of $p$-presentations. In this section we discuss a form of compatibility of these invariants when applying a  monoidal transformation along a smooth center $C$. 

The starting point will be a notion of transformation of $p$-presentations in \ref{tdpp}. A monoidal transformation defined by blowing-up a smooth center $C$ introduces an exceptional hypersurface, say $H$. The aim of the section is to relate the value of  the slope $Sl$ at the generic point of $H$  with the value of $Sl$ at the generic point of $C$ (see Proposition \ref{ppreOK}). This result will  be an essential ingredient for the proofs of Main Theorems in this work.
\end{parrafo}

\begin{parrafo}\label{tdpp}

Take a $p$-presentation $p\P=p\P(\beta,z,f_{p^e})$ of a simple $\beta$-differential algebra $\G$ on $V^{(d)}$. Namely, a smooth morphism $V^{(d)}\overset{\beta}{\longrightarrow}V^{(d-1)}$, a $\beta$-section $z$ and a monic polynomial $f_{p^e}(z)=z^{p^e}+a_1z^{p^e-1}+\dots+a_{p^e}$. Assume that $C\subset\Sing(\G)$ is a closed and smooth center, and that $z\in I(C)$. 
Locally at a closed point $x\in C$,  there is a regular system of parameters $\{z,x_1,\dots,x_{d-1}\}$ and, after restriction to a suitable neighborhood of $x$, $I(C)=\langle z,x_1,\dots,x_{\ell}\rangle$. 
Consider the commutative diagram 
\begin{equation}\label{eqcdfff}
\xymatrix@R=0pc@C=0pc{
\G   & &  &  &  &  \G_1\\
V^{(d)}\ar[ddd] ^\beta  & &  &  &  &  \quad V^{(d)}_{1}\ \ar[lllll]_{\pi_{C}}\ar[ddd]^{\beta_{1}}\\
\\
\\
V^{(d-1)} & &  &  & &  V^{(d-1)}_{1}\ar[lllll]_{\pi_{\beta(C)}}\\
\R_{\G,\beta}   & &  &  &  &  (\R_{\G,\beta})_1=\R_{\G_1,\beta_1}
}\end{equation}
and recall that $\Sing(\G_1) \subset V_1^{(d)}$ can be covered by affine charts $U_{x_i}$,
$$U_{x_i}=\Spec\Big(\calo_{V^{(d)}}\big[\frac{z}{x_i},\frac{x_1}{x_i},\dots,\frac{x_{i-1}}{x_i},x_i,\frac{x_{i+1}}{x_i},\dots,\frac{x_\ell}{x_i},x_{\ell+1}\dots,x_{d-1}\big]\Big),$$
for $i=1,\dots,\ell$; and also $V_1^{(d-1)}$  is covered by charts $U_{x_i}'$
$$U'_{x_i}=\Spec\Big(\calo_{V^{(d-1)}}\big[\frac{x_1}{x_i},\dots,\frac{x_{i-1}}{x_i},x_i,\frac{x_{i+1}}{x_i},\dots,\frac{x_\ell}{x_i},x_{\ell+1}\dots,x_{d-1}\big]\Big).$$
Note that the strict transform of $z$, say $z_1=\frac{z}{x_i}$, is a transversal parameter for $U_{x_i}\longrightarrow U'_{x_i}$. 

The hypersurface defined by $f_{p^e}$ at $V^{(d)}$ has multiplicity $p^e$ along points of $C$. Let
$$f_{p^e}^{(1)}(z_1)=z_1^{p^e}+a_1^{(1)}z^{p^e-1}+\dots+a_{p^e}^{(1)}$$
denote the strict transform of $f_{p^e}(z)$.
These data define, locally,  a $p$-presentation of $\G_1$, say $p\P_1=p\P_1(\beta_1,z_1,f_{p^e}^{(1)})$, which we call the \emph{transform} of $p\P=p\P(\beta,z,f_{p^e})$. 
\end{parrafo}

\begin{remark}\label{rmkzin}
\begin{enumerate}
\item In the previous discussion we have assumed that $z\in I(C)$. If $C$ is irreducible, this condition will hold for any $p$-presentation $p\P(\beta,z,f_{p^e})$ well-adapted at $\xi_{\beta(C)}$, (the generic point of 
$\beta(C)$ in $V^{(d-1)}$). In fact, after a suitable restriction to a neighborhood of the closed point $x\in C$, the simultaneous cleaning procedure  at $\beta(x)$ and $\xi_{\beta(C)}$, and the fact that $C\subset\Sing(\G)$ will allow us to modify $z$ so that $z\in I(C)$ (see Proposition \ref{prpri}). 
\item Note that the exponent $p^e$ (the degree of the monic polynomial), is also preserved by transformations of $p$-presentations.
\end{enumerate}
\end{remark}

\begin{remark}\label{rmk62same}
A point $y\in V_1^{(d-1)}$ has an image in $V^{(d-1)}$, say $\pi_{\beta(C)}(y)$. If $y$ is not in the exceptional locus of $\pi_{\beta(C)}$, there is an open neighborhood, say $U$, of $\pi_{\beta(C)}(y)$ over which both $\pi_C$ and $\pi_{\beta(C)}$ are the identity map. Thus the restriction of both $p$-presentations $p\P$  and $p\P_1$ to $U$ coincide. 

In particular, 
$$Sl(p\P_1)(y)=Sl(p\P)(\pi_{\beta(C)}(y))$$
whenever $y\in V_1^{(d-1)}$ is not on the exceptional locus. Moreover, if $p\P$ is well-adapted to $\G$ at $\pi_{\beta(C)}(y)$, then the same holds for $p\P_1$ at $y$. 
\end{remark}

\begin{remark}\label{lem:z1:sing}
Fix $x\in\Sing(\G)$ a closed point so that $\tau_{\G,x}= 1$ and assume that $C$ is a permissible center containing $x$. Let $p\P(\beta,z,f_{p^e})$ be a $p$-presentation. Denote by $y$ the generic point of $\beta(C)$. Assume that $p\P$ is well-adapted simultaneously at $\beta(x)$ and $y$ and, in particular, Remark \ref{rmkzin} (1) says that $z\in I(C)$.

The intersection of the strict transform of $f_{p^e}$ with the exceptional locus $\pi^{-1}_C(C)$ is defined by $In_C(f_{p^e})\subset gr_{I(C)}(\calo_{V^{(d)}} )$
As $C$ is an equimultiple center for $f_{p^e}$, the intersection of the strict transform with points of $\pi_C^{-1}(x)$ is determined by $In_x(f_{p^e})$.

Finally as $\tau_{\G,x}= 1$ and $p\P$ is well-adapted at $x$, then $In_x(f_{p^e})=Z^{p^e}$, and hence
  $x'\in \{z_1=0\}$ for any  $x'\in\Sing(\G_1)$ mapping to $x$, where   $z_1$ denotes  the strict transform of $z$.
\end{remark}

\begin{proposition}\label{ppreOK}
Let $C$ be a permissible center passing through a closed point $x\in\Sing(\G)$ and assume that $\tau_{\G,x}=1$. Fix a $p$-presentation $p\P(\beta,z,f_{p^e})$. Let $y$ denote the generic point of $\beta(C)$ and assume that $p\P$ is well-adapted to $\G$ both at $\beta(x)$ and at $y$. Define a monomial transformation with center $C$. Then:
\begin{enumerate}
\item The transform $p\P_1$ is well-adapted to $\G_1$ at $\xi_{H}$, (the generic point of the exceptional hypersurface $H\subset V^{(d-1)}$). Moreover,
$$Sl(p\P_1)(\xi_{H})=Sl(p\P)(y)-1.$$
\item If, in addition, $p\P$ is compatible with a monomial algebra, say $\calo_{V^{(d-1)}}[I(H_1)^{h_1}\dots I(H_r)^{h_r}W^s]$, then $p\P_1$ is compatible with the monomial algebra
$$\calo_{V^{(d-1)}}[I(H_1)^{h_1}\dots I(H_r)^{h_r}I(H)^{\gamma}W^s],$$
where $\frac{\gamma}{s}=Sl(p\P_1)(\xi_{H})=Sl(p\P)(y)-1$.
\end{enumerate}
\end{proposition}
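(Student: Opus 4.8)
The plan is to reduce both assertions to explicit order computations in the affine charts of the monoidal transformation of \ref{tdpp}, using two standard facts: (a) the order of a pulled-back ideal at the generic point $\xi_H$ of the exceptional divisor of $\pi_{\beta(C)}$ equals its multiplicity along $\beta(C)$, and (b) $(\R_{\G,\beta})_1=\R_{\G_1,\beta_1}$ (diagram \ref{cuadrado}(ii)) together with $\ord(\R_{\G_1,\beta_1})(\xi_H)=\ord(\R_{\G,\beta})(y)-1$, the behaviour of the $\ord$-function of a Rees algebra under blow-up of a permissible centre. First I would note that, $p\P$ being well-adapted at $y$, Remark \ref{rmkzin}(1) gives $z\in I(C)$, so the transform $p\P_1=p\P_1(\beta_1,z_1,f_{p^e}^{(1)})$ is defined with the same exponent $p^e$. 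Writing $I(C)=\langle z,x_1,\dots,x_\ell\rangle$ locally and using that $\Sing(\G)\subset\mathcal{F}_{p^e}$ forces $f_{p^e}$ to be equimultiple of multiplicity $p^e$ along $C$, hence $f_{p^e}\in I(C)^{p^e}$ at the generic point of $C$, one gets $a_j\in I(\beta(C))^j$, i.e.\ $\nu_y(a_j)\ge j$; together with $\beta(C)\subset\Sing(\R_{\G,\beta})$ (property P1) this yields $Sl(p\P)(y)\ge 1$, so the exponent $\gamma$ of part (2) is a non-negative integer (after replacing $s$ by a multiple, if needed).

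In the chart $U'_{x_{i_0}}$ through $\xi_H$ the strict-transform coefficients are $a_j^{(1)}=\pi^*(a_j)/x_{i_0}^{\,j}$, regular because $\nu_y(a_j)\ge j$, and by (a) one has $\nu_{\xi_H}(a_j^{(1)})=\nu_{\xi_H}(\pi^*a_j)-j=\nu_y(a_j)-j$ for every $j$. Taking the minimum over $j$ and combining with (b) and the definition of $Sl$ then gives $Sl(p\P_1)(\xi_H)=Sl(p\P)(y)-1$, which is the numerical assertion in (1) and pins down $\gamma/s$ in (2).

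For well-adaptedness at $\xi_H$: since $Sl(p\P)(y)\ge 1>0$ and $p\P$ is well-adapted at $y$, exactly one of the cases A), B1), B2) of \ref{rpcasos} holds there (case B3) being excluded), and I would verify the corresponding case for $p\P_1$ at $\xi_H$. The equality $Sl=\ord(\R)$ of A), the strict inequality $\nu_\bullet(a_{p^e})/p^e<\ord(\R)$ of B1)/B2) (Theorem \ref{rmk:x:restr}), and the integrality (or not) of $\nu_\bullet(a_{p^e})/p^e$ are all preserved under the uniform shift by $-1$; when the shift produces slope $0$ one lands harmlessly in case (2) of Definition \ref{defweladap} (case (2)A) if one was in A), case (2)B2) if in B2)). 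The one delicate point — which I expect to be the main obstacle — is that ``$\In_y(a_{p^e})$ is not a $p^e$-th power'' is preserved, i.e.\ the lemma: assuming $p^e\mid\nu_y(a_{p^e})$, $\In_y(a_{p^e})$ is a $p^e$-th power in $\Gr_y(\calo_{V^{(d-1)},y})$ if and only if $\In_{\xi_H}(a_{p^e}^{(1)})$ is a $p^e$-th power in $\Gr_{\xi_H}(\calo_{V_1^{(d-1)},\xi_H})$. Here $\Gr_{\xi_H}$ is a polynomial ring in one variable over the residue field $\kappa(\xi_H)$, which is purely transcendental over $k(y)$, and the leading coefficient of $\In_{\xi_H}(a_{p^e}^{(1)})$ is the dehomogenisation at $T_{i_0}=1$ of $\In_y(a_{p^e})\in k(y)[T_1,\dots,T_\ell]$; the equivalence then follows from the factoriality of $k(y)[T_j:j\ne i_0]$ (a $p^e$-th power in its fraction field already lies in the polynomial ring) together with the available divisibility $p^e\mid\nu_y(a_{p^e})$.

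For part (2), recall (Remark \ref{kr22}) that $p\P$ compatible with $\calo_{V^{(d-1)}}[\mathcal{N}W^s]$, $\mathcal{N}=\prod_j I(H_j)^{h_j}$, means $g^s\in\mathcal{N}^n$ for every $gW^n\in\R_{\G,\beta}$ and $a_i^s\in\mathcal{N}^i$ for $1\le i\le p^e$. Writing $\pi^*I(H_j)=I(H'_j)I(H)^{e_j}$ with $e_j\in\{0,1\}$ (the strict transforms $H'_j$ meeting $H$, and one another, in normal crossings) and $\mathcal{N}_0'=\prod_j I(H'_j)^{h_j}$, pulling back gives $\pi^*(a_i^s)\in(\mathcal{N}_0')^i$; since moreover $\nu_{\xi_H}(\pi^*(a_i^s))=s\,\nu_y(a_i)\ge i\,s\,Sl(p\P)(y)=i(\gamma+s)$ (the slope inequality) and $(\mathcal{N}_0')^i$ is coprime to $I(H)$, a routine regular-ring argument upgrades this to $\pi^*(a_i^s)\in(\mathcal{N}_0')^iI(H)^{i(\gamma+s)}$, whence after dividing by $I(H)^{is}$ one gets $a_i^{(1)}W^i\in\calo_{V_1^{(d-1)}}[\mathcal{N}'W^s]$ with $\mathcal{N}'=\prod_j I(H'_j)^{h_j}I(H)^\gamma$. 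The identical computation with $a_i^s$ replaced by $g^s$, using $\nu_y(g)/n\ge\ord(\R_{\G,\beta})(y)\ge Sl(p\P)(y)$, gives $\R_{\G_1,\beta_1}\subset\calo_{V_1^{(d-1)}}[\mathcal{N}'W^s]$. By Remark \ref{kr22} these are exactly the two conditions of Definition \ref{defmon} for $p\P_1$, and $\gamma/s=Sl(p\P_1)(\xi_H)=Sl(p\P)(y)-1$ by part (1).
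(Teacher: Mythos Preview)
Your argument is correct and follows essentially the same strategy as the paper: set up the transform $p\P_1$, observe that both $\nu_\bullet(a_{p^e})/p^e$ and $\ord(\R_{\G,\beta})$ shift uniformly by $-1$ along $\xi_H$, and then verify case by case that the well-adaptedness conditions A), B1), B2) of \ref{rpcasos} are preserved (or degenerate to the slope-zero cases of Definition~\ref{defweladap}). The one point where you differ is the handling of case B2): the paper splits into two explicit subcases according to whether $\In_{\beta(C)}(a_{p^e})$ lies in $\calo_{\beta(C)}[X_1^{p^e},\dots,X_\ell^{p^e}]$ or not, and tracks individual monomials and coefficients through the chart; you instead identify $\In_{\xi_H}(a_{p^e}^{(1)})$ (up to the power of the uniformizer) with the dehomogenisation of $\In_y(a_{p^e})$ at $T_{i_0}=1$ and invoke factoriality of $k(y)[T_j:j\neq i_0]$ to transfer the ``not a $p^e$-th power'' condition. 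Your route is a bit slicker and avoids the case split, but the underlying content is the same. Your explicit verification of part~(2) is also fine; the paper simply records that it follows from~(1).
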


\begin{proof} Note that (2) follows from (1).
Set
\begin{equation}
\xymatrix@R=0pc@C=0pc{
p\P   & &  &  &  &  p\P_1\\
V^{(d)}\ar[dddd] ^\beta  & &  &  &  &  \quad V^{(d)}_{1}\ \ar[lllll]_{\pi_{C}}\ar[dddd]^{\beta_{1}}\\
\\
 \\
\\
V^{(d-1)} & &  &  & &  V^{(d-1)}_{1}\ar[lllll]_{\pi_{\beta(C)}}\\
y=\xi_{\beta(C)}   & &  &  &  &  \xi_H
}\end{equation}
where $H$ is the exceptional hypersurface, and
$$f_{p^e}^{(1)}(z_1)=z_1^{p^e}+a_1^{(1)}z^{p^e-1}+\dots+a_{p^e}^{(1)}$$ 
is the strict transform of $f_{p^e}(z)$. At points of $ U_{x_i}$,  $z_1=\frac{z}{x_i}$ and the coefficients $a_n^{(1)}$ factor as 
\begin{equation}\label{eq:fact:an}
a_{n}^{(1)}=x_i^{\nu_y(a_n)-n}a'_{n}=x_i^{r_n}a_n',
\end{equation}
where $a'_{n}$ denotes the strict transform of $a_{n}$ and $r_n=\nu_y(a_n)-n$, for $n=1,\dots,p^e$.

Different cases can arise under these assumptions, we classify them as in \ref{rpcasos}: 
\begin{enumerate}
\item[(A)]
Suppose that  $Sl(p\P)(y)=\ord(\R_{\G,\beta})(y)$ and, in particular, that $\frac{\nu_y(a_j)}{j}\geq \ord(\R_{\G,\beta})(y)$. At the points of $\Sing(\G_1)\cap U_{x_i}$,
$$\frac{\nu_{\xi_{H}}(a_{p^e}^{(1)})}{p^e}=\frac{\nu_y(a_{p^e})}{p^e}-1\geq \ord(\R_{\G,\beta})(y)-1=\ord((\R_{\G,\beta})_1)(\xi_{H}).$$
Thus $p\P_1$ is well-adapted to $\G_1$ at $\xi_H$ (case A) in \ref{rpcasos} and Definition \ref{defweladap}).

\item[(B)] Suppose that
$$Sl(p\P)(y)=\frac{\nu_{y}(a_{p^e})}{p^e}<\ord(\R_{\G,\beta})(y).$$ 

\begin{enumerate}
\item[(B.1)] Assume now that  $\frac{\nu_{y}(a_{p^e})}{p^e}\not\in\mathbb{Z}_{>0}$.  In this case, $ \nu_{y}(a_{p^e})>p^e$ and, in addition, $\nu_{y}(a_j)>j$ for $j=1\dots,p^e-1$. In particular,
$\nu_{\beta(x)}(a_j)>j$ for $j=1,\dots,p^e$
 and hence $\In_{x}(f_{p^e})=Z^{p^e}$. 
 
Lemma \ref{lem:z1:sing} (2) applies so $\Sing(\G_1)\cap H_{i+1}\subset\{z_1=0\}$. Under these assumptions,
$$\frac{r_{p^e}}{p^e}=\frac{\nu_{y}(a_{p^e})}{p^e}-1<\frac{\nu_y(a_j)}{j}-1=\frac{r_j}{j}$$
for $j=1,\dots,p^e-1$, and $\frac{r_{p^e}}{p^e}\not\in\mathbb{Z}_{\geq0}.$
 So, locally at any closed point $x'\in\Sing(\G_1)$ mapping to $x$, $p\P_1=p\P_1(\beta_1,z_1,f_{p^e}^{(1)})$ is of the form B1) in \ref{rpcasos} and therefore well-adapted to $\G_1$ at $\xi_H$. 
 
\item[(B.2)]  Suppose that $\frac{\nu_{y}(a_{p^e})}{p^e}=r\in\mathbb{Z}_{>0}$ and $\tau_{\G,x}=1$. The singular locus at any exceptional point mapping to $x$ is contained in the strict transform of $z$ as indicated in Remark \ref{lem:z1:sing}. 

Consider $\In_{\beta(C)}(a_{p^e})\in \Gr_{I(\beta(C))}(\calo_{V^{(d-1)}})=\calo_{\beta(C)}[X_1,\dots,X_\ell]$, and set
$$\In_{\beta(C)}(a_{p^e})=\sum_{|\alpha|=r p^e}b_\alpha M^{\alpha},$$
which, by assumption, is not a $p^e$-th power.

\item[(B.2.a)]First assume that $\In_{\beta(C)}(a_{p^e})\not\in \calo_{\beta(C)}[X_1^{p^e},\dots,X_\ell^{p^e}]$.
In this case, as the degree of $\In_{\beta(C)}(a_{p^e})$ is a multiple of $p^e$, there is a multi-index $\alpha=(\alpha_1,\dots,\alpha_\ell)$ with at least two integers which are not multiple of $p^e$, and $b_\alpha\not=0$.

We claim now that $a'_{p^e}$ restricted to the exceptional hypersurface $x_i=0$, say $\overline{a_{p^e}'}$,  is not a $p^e$-th power. This can be checked using the existence of the previous
multi-index $\alpha=(\alpha_1,\dots,\alpha_\ell)$.

This ensures that $p\P_1$ is in the case B2) in \ref{rpcasos}, and hence that $p\P_1$ is well-adapted to $\G_1$ at $\xi_{H}$.

\item[(B.2.b)] Suppose now that any $M^\alpha$ is a $p^e$-th power whenever $|\alpha|=rp^e$. Recall that $In_{\beta(C)}(a_{p^e})$ is not a $p^e$-th power, so some $b_\alpha$ is not a $p^e$-th power. Setting as before $\overline{a_{p^e}'}$ as the restriction of $a_{p^e}'$ to the exceptional hypersurface, then one checks that $\overline{a_{p^e}'}$ is not a $p^e$-th power as $b_\alpha$ is not a $p^e$-th power.
So again $p\P_1$ is well-adapted at $\xi_{H}$.
\end{enumerate}
\end{enumerate}
\end{proof}


\section{On the two Main Theorems}\label{secMT}
\begin{parrafo} Fix a smooth scheme  $V^{(d)}$ and a simple algebra $\G$ which we assume to be an absolute differential algebra. This  ensures that $\G$ is a $\beta$-differential algebra for any smooth transversal morphism $\beta:V^{(d)}\longrightarrow V^{(d-1)}$. 
It is under  this last condition  that a function $\beta-ord^{(d-1)}(\G): V^{(d-1)}\longrightarrow \mathbb Q$ was defined in \ref{def:z:adap:x}.
The same holds for any other  $\beta^{\prime}:V^{(d)}\longrightarrow V'^{(d-1)}$ transversal to $\G$.

A sequence (\ref{unaseq}) of permissible transformations of $\G$
induces two diagrams
\begin{equation}\label{seq7102}
\xymatrix@R=0pc@C=-0.05pc{
\G & & & & & \G_1 &  & & & &  &  & & & &   \G_r &  & & &  & & & & \G & & & & & \G_1 &  & & & &  &  & & & &   \G_r\\
V^{(d)}\ar[ddd]^\beta  &  & & & &   V^{(d)}_{1}\ar[lllll]_{\pi_{C_1}}\ar[ddd]^{\beta_1}    & & & & & \dots \ar[lllll] &  & & & &   V^{(d)}_{r}\ar[lllll]_{\pi_{C_r}}\ar[ddd]^{\beta_r} &  & & & &  & & & V^{(d)}\ar[ddd]^{\beta^{\prime}}  &  & & & &   V^{(d)}_{1}\ar[lllll]_{\pi_{C_1}}\ar[ddd]^{\beta^{\prime}_1}    & & & & & \dots \ar[lllll] &  & & & &   V^{(d)}_{r}\ar[lllll]_{\pi_{C_r}}\ar[ddd]^{\beta^{\prime}_r}  \\
\\
\\
V^{(d-1)}  &  & & & &   V^{(d-1)}_{1}\ar[lllll]_{\pi_{\beta(C_1)}}   & & & & & \dots \ar[lllll] &  & & & &   V^{(d-1)}_{r}\ar[lllll]_{\pi_{\beta(C_r)}} &  & & &   & & & & {V^{\prime}}^{(d-1)}  &  & & & &   {V^{\prime}}^{(d-1)}_{1}\ar[lllll]_{\pi_{\beta'(C_1)}}   & & & & & \dots \ar[lllll] &  & & & &   {V^{\prime}}^{(d-1)}_{r}\ar[lllll]_{\pi_{\beta'(C_r)}}\\
\R_{\G,\beta} & & & & & (\R_{\G,\beta})_1 &  & & & &  &  & & & &   (\R_{\G,\beta})_r &  & & &  & & & & \R_{\G,\beta'} & & & & & (\R_{\G,\beta'})_1 &  & & & &  &  & & & &   (\R_{\G,\beta'})_r
}\end{equation}
%

%

\begin{theorem}\label{result43}{\rm (}{\bf  Main Theorem 1}{\rm )}.
Assume that the previous setting holds. Then, for any point $q\in \Sing(\G_r)$,
 $$\beta_r-ord(\G_r)(\beta(q))=\beta_r^{\prime}-ord(\G_r)(\beta'(q)).$$
 Moreover, if $p\P=p\P(\beta_r,z,f_{p^e})$ is well-adapted to $\G_r$ at $q$, then
 $$\beta_r-ord(\G_r)(\beta_r(q))=Sl(p\P)(\beta_r(q)).$$
\end{theorem}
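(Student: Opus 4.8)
The plan is to establish the two assertions together. First I would record the key simplification: by Theorem~\ref{rmk:x:restr}, for every $p$-presentation $p\P'=p\P'(\beta_r,z',f'_{p^e})$ of $\G_r$ involving $\beta_r$ and every $y\in V_r^{(d-1)}$,
\[
Sl(p\P')(y)=\min\Big\{\tfrac{\nu_y(a'_{p^e})}{p^e},\ \ord(\R_{\G_r,\beta_r})(y)\Big\},
\]
so that only the purely inseparable term $\nu_y(a'_{p^e})/p^e$ depends on the choice of transversal section, while $\ord(\R_{\G_r,\beta_r})(y)$ is attached to the pair $(\G_r,\beta_r)$. The argument then has two parts: (i) a well-adapted $p$-presentation realises the maximum defining $\beta_r\text{-}\ord(\G_r)(y)$; (ii) this value is unchanged when $\beta_r$ is replaced by $\beta_r'$.

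For (i) I would fix $y=\beta_r(q)$ and a $p$-presentation $p\P=p\P(\beta_r,z,f_{p^e})$ well-adapted to $\G_r$ at $y$. On one side, the cleaning process of Proposition~\ref{prpri}, together with its finiteness (Remark~\ref{rk211}), shows that any $p\P'$ involving $\beta_r$ near $y$ can be modified, by a finite chain of section changes $z'\mapsto z'+\alpha$, into a well-adapted presentation whose slope at $y$ is $\ge Sl(p\P')(y)$; hence the maximum in Definition~\ref{def:z:adap:x} is attained on well-adapted presentations. On the other side I would check that a well-adapted presentation admits no further improvement: a change by a unit is harmless (see \ref{cpll}); if $p\P$ is in case~A of \ref{rpcasos} then $Sl(p\P)(y)=\ord(\R_{\G_r,\beta_r})(y)$, an absolute upper bound for $Sl$; and if $p\P$ is in case~B1 or~B2, the valuation estimates in the proof of Proposition~\ref{prpri}, combined with the integrality of $\nu_y(\alpha)$ (which forbids the cancellation needed to raise the slope in case~B1, the slope not being an integer) and with $\In_y(a_{p^e})$ not being a $p^e$-th power (which forbids it in case~B2), give $\nu_y(a'_{p^e})\le\nu_y(a_{p^e})$ for every $z\mapsto uz+\alpha$. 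Since any two $p$-presentations of $\G_r$ involving $\beta_r$ near $y$ are connected by such changes of transversal section (together with a choice of monic generator $f_{p^e}W^{p^e}\in\G_r$, which likewise does not raise the slope of a well-adapted presentation), it follows that $Sl(p\P)(y)=\beta_r\text{-}\ord(\G_r)(y)$; this proves the final assertion of the theorem.

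For (ii) I would take $q\in\Sing(\G_r)$, put $y=\beta_r(q)$, $y'=\beta_r'(q)$, and use Proposition~\ref{simult:adap} to choose $p\P=p\P(\beta_r,z,f_{p^e})$ well-adapted at $y$ and $p\P'=p\P(\beta_r',z',f'_{p^e})$ well-adapted at $y'$, with $z,z'$ of order one at $q$ (Remark~\ref{rmk56}). By part (i) it is enough to prove $Sl(p\P)(y)=Sl(p\P')(y')$, and by P2) of \ref{csc} one has $\ord(\R_{\G_r,\beta_r})(y)=\ord(\R_{\G_r,\beta_r'})(y')=:\rho$; so by the displayed formula everything comes down to comparing the two inseparable contributions. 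Here I would argue by descending induction on the number of monoidal transformations: in the essential case $\tau_{\G_r,q}=1$, a monoidal transformation along a permissible center through $q$ carries $p\P,p\P'$ to transforms $p\P_1,p\P_1'$ that, by Proposition~\ref{ppreOK}(1) and Remark~\ref{lem:z1:sing}, are again well-adapted at the generic points of the new exceptional divisors — which are both images of a single point over $q$ — and both slopes drop by exactly $1$. Iterating until the slopes lie in $(0,1]$, and invoking at each step that, by Proposition~\ref{prop36}(i), this common point lies in the singular locus of the transformed algebra if and only if the slope there is $\ge 1$ — a dichotomy that must agree on the two sides — forces the transformed slopes, hence (unwinding) $Sl(p\P)(y)$ and $Sl(p\P')(y')$, to coincide; Corollary~\ref{corol:2pres:geq1} furnishes the base step of this comparison.

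The hard part is exactly this final comparison of the purely inseparable contributions $\nu_y(a_{p^e})/p^e$ across two transversal projections: in contrast with $\ord(\R_{\G,\beta})$, this number carries no a priori intrinsic meaning, and controlling it — in particular its fractional part — requires weaving together the cleaning process, the behaviour of slopes under monoidal transformations (Proposition~\ref{ppreOK}), and the threshold statement of Corollary~\ref{corol:2pres:geq1}. This is the reason the complete proof is carried out in Part~III.
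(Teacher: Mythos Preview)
Your treatment of the second assertion (part (i)) is essentially right: the cleaning process of Proposition~\ref{prpri} and Remark~\ref{rk211} does show that the supremum in Definition~\ref{def:z:adap:x} is attained on well-adapted presentations, and the case analysis of \ref{rpcasos} shows that no change $z\mapsto uz+\alpha$ can push the slope of a well-adapted presentation higher. (The claim that a change of the monic generator $f_{p^e}$ cannot raise the slope of a well-adapted presentation deserves a line of justification, but it follows from Theorem~\ref{rmk:x:restr} together with the fact that $\ord(\R_{\G_r,\beta_r})$ is independent of this choice.)

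The genuine gap is in part (ii). Your iteration via Proposition~\ref{ppreOK} drops each slope by exactly~$1$ per blow-up, and the stopping criterion (whether the image point is still in $\Sing$) is the threshold $Sl\ge 1$. Consequently the argument, as written, can detect at most $\lceil Sl(p\P)(y)\rceil$; it cannot separate two well-adapted slopes with the same integer part but different fractional parts. You flag ``in particular its fractional part'' as the hard point, but the ingredients you list---cleaning, Proposition~\ref{ppreOK}, Corollary~\ref{corol:2pres:geq1}---are all integer-step tools and do not, by themselves, recover the fraction. There is also a subtlety in the iteration itself: after one blow-up the slope is computed at the generic point $\xi_H$ of the exceptional hypersurface in $V^{(d-1)}_{r+1}$; to iterate you would need a permissible center in $\Sing(\G_{r+1})$ whose $\beta_{r+1}$-image is dense in $H$, i.e.\ a codimension-two center, and nothing guarantees its permissibility at the next step.

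The paper's proof in Part~\ref{partf} takes a different route that resolves both issues at once. It shows that $q=Sl(p\P)(\beta(x))$ is determined by the \emph{weak equivalence class} of $\G_r$ (hence is independent of $\beta_r$) via the following construction: pull $\G_r$ back to $V_r^{(d)}\times\mathbb{A}^1$, blow up $N$ times along the successive points $q_i$ on the strict transform of $\{x\}\times\mathbb{A}^1$, and then perform as many codimension-two blow-ups along $\langle z_{N+i},t\rangle$ as remain permissible. The maximal number of the latter is $\ell_N=\lceil N(q-1)-1\rceil$; this count is visibly an invariant of the weak equivalence class (it depends only on $\Sing(\G_j)$ along the sequence), and
\[
\lim_{N\to\infty}\frac{\ell_N}{N}=q-1
\]
recovers the full rational number $q$, fractional part included. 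The extra $\mathbb{A}^1$-direction is precisely what supplies the ``room'' your iteration lacks, and letting $N\to\infty$ is what turns the integer count $\ell_N$ into a rational invariant. This is the missing idea.
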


\begin{corollary}\label{def_vord}
The previous result enables us to define a function along $\Sing(\G_r)$:
$$v-ord^{(d-1)}(\G_r)(-):\Sing(\G_r)\longrightarrow \mathbb{Q}.$$
Moreover, if $p\P=p\P(\beta_r,z,f_{p^e}=z^{p^e}+a_1z^{p^e-1}+\dots+a_{p^e})$ is well-adapted to $\G_r$ at $\beta_r(x)$ ($x\in\Sing(\G_r)$), then
$$v-ord^{(d-1)}(\G_r)(x)=\min\Big\{\frac{\nu_{\beta_r(x)}(a_{p^e})}{p^e},\ord(\R_{\G,\beta})(\beta_r(x))\Big\},$$

\end{corollary}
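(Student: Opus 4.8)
The two assertions are proved in Part \ref{partf}; here is the plan. The second assertion involves a single projection $\beta_r$ and is comparatively soft, while the first compares two projections and contains the real difficulty, so I would prove the second first and reduce the first to it.

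\emph{Step 1: well-adapted presentations are extremal (second assertion).} Fix $\beta_r$ and put $y=\beta_r(q)$; since $q\in\Sing(\G_r)$ one has $\beta_r-ord(\G_r)(y)\ge 1$ and $\mathcal F(\G_r,\beta_r,y)\neq\emptyset$ by Proposition \ref{pr_local} and the remark preceding Definition \ref{pr_locp}. Given $p\Q\in\mathcal F(\G_r,\beta_r,y)$, the cleaning process (Proposition \ref{prpri}, Remark \ref{rk211}, and Remark \ref{rmk44pe} when $Sl(p\Q)(y)=0$) modifies the transversal section by $z\mapsto uz+\alpha$ and yields a well-adapted $p$-presentation $p\Q'$ with $Sl(p\Q')(y)\ge Sl(p\Q)(y)$; it terminates because $\ord(\R_{\G_r,\beta_r})$, hence $Sl$, takes only finitely many values (Remark \ref{rk32}). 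So the supremum defining $\beta_r-ord(\G_r)(y)$ is attained on well-adapted presentations, and it remains to show that all $p$-presentations using $\beta_r$ and well-adapted at $y$ have the same slope at $y$. By Theorem \ref{rmk:x:restr} such a slope is either $\ord(\R_{\G_r,\beta_r})(y)$ (case A) or $\nu_y(a_{p^e})/p^e<\ord(\R_{\G_r,\beta_r})(y)$ (cases B1, B2); since $Sl(p\Q)(y)\le\ord(\R_{\G_r,\beta_r})(y)$ for every $p\Q$ (Definition \ref{defslp}), a well-adapted presentation in case A already realises the supremum, which is then $\ord(\R_{\G_r,\beta_r})(y)$, and one is left with the situation in which all well-adapted presentations lie in cases B1, B2. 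For two such with the same monic polynomial, equality of $\nu_y(a_{p^e})/p^e$ is Proposition \ref{prpri} applied to the change of section linking them (which neither raises nor, by symmetry, lowers the slope); for two different monic polynomials of the common exponent $p^e$, Proposition \ref{pr_local} shows that a fixed section $z$ pins down $a_{p^e}$ up to summands lying in $\R_{\G_r,\beta_r}$ and in $\langle a_1,\dots,a_{p^e-1}\rangle$, and since $\nu_y(a_{p^e})/p^e$ is strictly below $\ord(\R_{\G_r,\beta_r})(y)$ and below every $\nu_y(a_i)/i$ with $i<p^e$, none of those summands alters $\nu_y(a_{p^e})$. This yields the second assertion, and Corollary \ref{def_vord} follows by rewriting the minimum in Definition \ref{defslp} with Theorem \ref{rmk:x:restr}.

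\emph{Step 2: reducing the first assertion to one invariance.} Choose $p$-presentations $p\P$ of $\G_r$ using $\beta_r$, well-adapted at $\beta(q)$, and $p\P'$ using $\beta_r'$, well-adapted at $\beta'(q)$ (they exist by cleaning). By Step 1,
\[
\beta_r-ord(\G_r)(\beta(q))=\min\Big\{\tfrac{\nu_{\beta(q)}(a_{p^e})}{p^e},\ \ord(\R_{\G_r,\beta_r})(\beta(q))\Big\}
\]
and symmetrically for $\beta_r'$. The two $\ord$-terms coincide by property \textbf{P2} of \ref{csc}. So it suffices to see that the coefficient slopes $\nu(a_{p^e})/p^e$ agree whenever one of them is the governing minimum; in that regime $\ord(\R_{\G_r,\beta_r})(\beta(q))>0$, so by \textbf{P3} of \ref{csc} the polynomial $\overline{f_{p^e}}$ is a $p^e$-th power of a linear polynomial on the fibre over $\beta(q)$, and likewise over $\beta'(q)$. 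The question then becomes one about the hypersurface $V(\langle f_{p^e}\rangle)$ alone: the maximal coefficient slope $\max_z\{\min_{1\le i\le p^e}\{\nu(a_i)/i\}\}$ over transversal sections of a transversal projection — the rational number in (\ref{eqmax}) — is independent of the chosen projection. Granting this, the first assertion follows.

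\emph{Step 3: the main obstacle.} The hard point is exactly this last invariance. In contrast with $\ord(\R_{\G_r,\beta_r})$, which enters only as the black box \textbf{P2}, the coefficient $a_{p^e}$ deforms in a complicated way when the transversal projection is tilted. My plan is to reduce, via \textbf{P3} of \ref{csc}, to $\G_r=\calo_{V_r^{(d)}}[f_{p^e}W^{p^e}]$ with $\overline{f_{p^e}}$ purely inseparable on the relevant fibre, and then to track, through the universal polynomial and its universal elimination algebra — the machinery already used in the proof of Proposition \ref{pr_local} and in \cite{VV4} — how the norms of the $\Delta^{(\alpha)}(f_{p^e})$, and hence the coefficient slope, transform under a simultaneous change of section and of projection; one expects the extra slack introduced when passing from a section to a full transversal projection to be absorbed by the elimination algebra, leaving the maximum unchanged. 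The compatibility with the sequence (\ref{unaseq}) is then handled by Proposition \ref{ppreOK} — which shows that the value of $Sl$ at the generic point of an exceptional hypersurface, and the property of being well-adapted, are governed by the data before the last blow-up, the slope dropping by exactly $1$ — together with Remark \ref{rmk62same} for points off the exceptional locus, so that one may, if desired, induct on the length $r$ and reduce to the absolute-differential case $r=0$. The bookkeeping making all of this precise is carried out in Part \ref{partf}.
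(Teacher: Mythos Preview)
The corollary is an immediate consequence of Main Theorem 1 together with Theorem \ref{rmk:x:restr}; what you are sketching is really a proof strategy for Main Theorem 1 itself. Your decomposition is reasonable, but the genuine gap is Step 3, which you correctly flag as the hard point. Your proposed plan there --- tracking, via the universal elimination algebra of Proposition \ref{pr_local} and \cite{VV4}, how the coefficient slope transforms under a simultaneous change of section and of projection --- is not actually carried out, and it is \emph{not} the route taken in Part \ref{partf}. Your Step 1 also has a soft spot: the claim that a fixed section $z$ pins down $a_{p^e}$ modulo summands in $\R_{\G_r,\beta_r}$ and $\langle a_1,\dots,a_{p^e-1}\rangle$ does not follow from Proposition \ref{pr_local} as stated (that proposition asserts equality of integral closures, not a normal form for the independent coefficient), and the paper never needs such a statement.

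The paper's argument in Section \ref{sec777} is structurally different and sidesteps any direct comparison of two projections. It shows instead that for $p\P$ well-adapted at $\beta(x)$ the value $Sl(p\P)(\beta(x))$ is determined by the \emph{weak equivalence class} of $\G$, which is manifestly intrinsic. The mechanism is geometric: pull $\G$ back to $V^{(d)}\times\mathbb{A}^1$, blow up $N$ times at the origins $q_0,q_1,\dots$ of the successive $U_t$-charts, and then blow up as many times as possible along the codimension-$2$ centers $\langle z_{N+i},t\rangle$. The maximal number $\ell_N$ of these codimension-$2$ blow-ups depends only on the singular loci of the intermediate transforms --- hence only on the weak equivalence class --- and one computes
\[
\lim_{N\to\infty}\frac{\ell_N}{N}=Sl(p\P)(\beta(x))-1.
\]
This single argument proves both assertions of Main Theorem 1 at once (so in particular subsumes your Step 1), without ever comparing $a_{p^e}$ across two projections or invoking the universal-polynomial bookkeeping you propose.
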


\end{parrafo}

\begin{parrafo}\label{asert} 
Recall that the exceptional locus of the composite map $V^{(d)}\overset{\pi}{\longleftarrow}V_r^{(d)}$ in (\ref{unaseq}), say $\{H_1,\dots,H_r\}$, is a set of hypersurfaces at $V_r^{(d)}$ and it is assumed that the union has only normal crossings.

We now attach to the sequence (\ref{unaseq}) a monomial algebra supported on the exceptional locus: 
\begin{equation}\label{eqmonomio}
\m_r W^s= \calo_{V_r^{(d)}}[I(H_1)^{h_1}\dots I(H_r)^{h_r}W^s],
\end{equation}
with exponents $h_i\in\mathbb{Z}_{\geq0}$ defined so that:
$$q_{H_i}:=\frac{h_i}{s}=v-ord^{(d-1)}(\G_{i-1})(\xi_{C_i})-1$$
where $\xi_{C_{i}}$ denotes the generic point of each center $C_i$ ($i=1,\dots,r$).
 
Here $s$ is a positive integer so that $\{q_{H_1},\dots,q_{H_r}\}\subset \frac{1}{s}\Z$. As Rees algebras are considered up to integral closure, $\m_r W^s$ is independent of the choice of $s$; and will be called the \emph{tight monomial algebra} of $\G_r$ or the \emph{tight monomial algebra} defined by (\ref{unaseq}). 
\end{parrafo}

\begin{theorem}{\rm (}{\bf Main Theorem 2}{\rm )}\label{MT2}.
Fix a sequence of permissible transformations as (\ref{unaseq}). Let $\m_r W^s$ denote the tight monomial algebra defined in \ref{asert}.
Then, at any closed point $x\in\Sing(\G_r)$,
$\m_r W^s$ has monomial contact with $\G_r$, i.e., there is a  $\beta_r$-transversal section $z$ of order one at  $\calo_{V^{(d)}_r,x}$ for which
$$\G_r\subset \id{z}W\odot \m_r W^s.$$
\end{theorem}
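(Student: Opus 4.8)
The plan is to prove Main Theorem 2 by induction on the length $r$ of the sequence (\ref{unaseq}), using the transformation behavior of $p$-presentations established in Section \ref{sec666} together with the numerical characterization of the tight monomial algebra in \ref{asert}. For $r=0$ there is nothing to prove beyond the existence of a well-adapted $p$-presentation at the closed point $x$ with $z$ of order one, which is guaranteed by Remark \ref{rmk56} (and Proposition \ref{prop36}) since $\m_0W^s$ is trivial. So suppose the statement holds for sequences of length $r-1$: the tight monomial algebra $\m_{r-1}W^s$ has monomial contact with $\G_{r-1}$, and moreover — this is the invariant we really need to carry along the induction — that for every closed point of $\Sing(\G_{r-1})$ there is a $p$-presentation which is simultaneously well-adapted \emph{and} compatible with $\m_{r-1}W^s$ in the sense of Definition \ref{defmon}. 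This stronger inductive hypothesis is legitimate because Propositions \ref{adap:mon} and \ref{simult:adap} say precisely that the cleaning process can be run while preserving compatibility with a fixed monomial algebra, and it reduces Theorem \ref{MT2} to tracking compatibility through one monoidal transformation.

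Next I would fix the closed point $x\in\Sing(\G_r)$, let $x'=\pi_{C_{r-1}}(x)\in\Sing(\G_{r-1})$ be its image under the last blow-up, and apply the inductive hypothesis at $x'$. Let $y$ be the generic point of $\beta_{r-1}(C_{r-1})$. By Proposition \ref{simult:adap}(B) I may choose, after restricting to a neighborhood of $x'$, a $p$-presentation $p\P=p\P(\beta_{r-1},z,f_{p^e})$ of $\G_{r-1}$ that is well-adapted both at $\beta_{r-1}(x')$ and at $y$ and compatible with $\m_{r-1}W^s$; Remark \ref{rmkzin}(1) then forces $z\in I(C_{r-1})$. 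Now there are two cases according to whether $x$ lies on the exceptional divisor $H=H_r$ of $\pi_{C_{r-1}}$ or not. If $x$ is not on the exceptional locus, Remark \ref{rmk62same} says the transformed $p$-presentation $p\P_1$ agrees with $p\P$ locally near $x$, so $p\P_1$ is well-adapted at $\beta_r(x)$ and compatible with $\m_{r-1}W^s$, and since the exponent $h_r$ of $I(H_r)$ in $\m_rW^s$ contributes nothing away from $H_r$, monomial contact at $x$ is immediate. If $x\in H_r$, then — using that $\tau_{\G_{r-1},x'}=1$, which one must argue holds at any point of a permissible center for a simple algebra admitting such a $p$-presentation, so that Lemma \ref{lem:z1:sing} and Proposition \ref{ppreOK} apply — Proposition \ref{ppreOK}(1) shows $p\P_1$ is well-adapted to $\G_r$ at $\xi_{H_r}$ with $Sl(p\P_1)(\xi_{H_r})=Sl(p\P)(y)-1$, and Proposition \ref{ppreOK}(2) shows $p\P_1$ is compatible with $\m_{r-1}W^s\odot I(H_r)^{\gamma}W^s$ where $\gamma/s=Sl(p\P)(y)-1$. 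By Main Theorem 1 (Corollary \ref{def_vord}), $Sl(p\P)(y)=v\text{-}ord^{(d-1)}(\G_{r-1})(\xi_{C_{r-1}})$, so $\gamma/s = q_{H_r}=h_r/s$, i.e. $I(H_r)^{\gamma}W^s$ is exactly the new factor in the tight monomial algebra $\m_rW^s$; hence $p\P_1$ is compatible with $\m_rW^s$ at $\xi_{H_r}$.

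It then remains to descend from the generic point $\xi_{H_r}$ of the exceptional divisor to the closed point $x$. I would invoke Proposition \ref{simult:adap}(B) once more, now applied at the pair $(\xi_{H_r},x)$ on $V_r^{(d-1)}$ with $x\in\overline{\xi_{H_r}}$ and $\calo_{H_r,x}$ regular, to produce a $p$-presentation well-adapted to $\G_r$ simultaneously at $\xi_{H_r}$ and at $\beta_r(x)$, still compatible with $\m_rW^s$ (Proposition \ref{adap:mon} guarantees the cleaning at $x$ does not destroy compatibility). Finally, compatibility with $\m_rW^s$ at $\beta_r(x)$ means exactly $(\R_{\G,\beta})_r\subset\calo[\m_rW^s]$ and $a_i^{(r)}W^i\in\calo[\m_rW^s]$ for all $i$, and by Proposition \ref{pr_locp} (via the local presentation (\ref{eqdpl})) this is equivalent to $\G_r\subset\langle z\rangle W\odot\m_rW^s$ with $z$ a $\beta_r$-section of order one at $\calo_{V_r^{(d)},x}$ (order one by Remark \ref{rmk56}). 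This is the assertion of Theorem \ref{MT2}.

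The main obstacle I anticipate is the bookkeeping at the exceptional divisor: one must be sure that the $p$-presentation chosen at $x'$ can be cleaned to be well-adapted at $y=\xi_{\beta_{r-1}(C_{r-1})}$ \emph{and} at $\beta_{r-1}(x')$ \emph{and} compatible with $\m_{r-1}W^s$ all at once, then transformed, then re-cleaned at $\xi_{H_r}$ and at $\beta_r(x)$ — and that at each of these cleaning steps the monomial-compatibility is genuinely preserved and the hypothesis $\tau=1$ (needed for Proposition \ref{ppreOK}) is available. The hypothesis $\tau_{\G,\cdot}=1$ deserves care: either it is part of the standing assumptions propagated from $\G=\calo_{V^{(d)}}[fW^n]$ together with the reduction to the power-of-$p$ degree $p^e$ in the $p$-presentation (where the tangent ideal of the monic polynomial is $Z^{p^e}$, giving $\tau=1$ after adding $\R_{\G,\beta}$ only if the elimination algebra contributes no further tangent directions), or, if $\tau>1$ can occur, one would need the analogue of Proposition \ref{ppreOK} in higher $\tau$, in which case the argument instead projects further down. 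I would handle this by restricting attention, as the paper implicitly does through the standing hypotheses on $\G_r$, to the case where the relevant $p$-presentations have $\tau=1$ along the centers, and flag the general case as reducing to it by the stability results of Section \ref{sec222}.
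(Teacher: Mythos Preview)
Your inductive skeleton matches the paper's: induction on $r$, simultaneous cleaning via Proposition \ref{simult:adap}(B) to get a $p$-presentation well-adapted at both $\beta_{r-1}(x')$ and the generic point $y$ of $\beta_{r-1}(C_{r-1})$ while preserving compatibility with $\m_{r-1}W^s$ (Proposition \ref{adap:mon}), the off-exceptional case via Remark \ref{rmk62same}, and the identification of the new exponent $h_r$ with $s\big(Sl(p\P)(y)-1\big)$ via Main Theorem 1. (A minor point: the paper carries only compatibility, not well-adaptedness, as the inductive hypothesis, producing well-adaptedness fresh at each step; your stronger bookkeeping is harmless but unnecessary, and the extra descent to the closed point via a second application of Proposition \ref{simult:adap}(B) is not needed for monomial contact.)

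The genuine gap is precisely the one you flag, but your proposed remedies do not work. The condition $\tau_{\G_{r-1},x'}=1$ is \emph{not} automatic, Theorem \ref{MT2} is stated and used without any $\tau$-restriction, and nothing in Section \ref{sec222} performs a reduction to $\tau=1$. The paper's fix is different and is the missing idea: the case split is not on $\tau$ but on whether $\In_{x'}(f_{p^e})=Z^{p^e}$ for the well-adapted presentation. When this holds, the conclusions of Remark \ref{lem:z1:sing} and Proposition \ref{ppreOK} go through verbatim (inspect their proofs: the hypothesis $\tau=1$ is used only to force $\In_{x'}(f_{p^e})=Z^{p^e}$), and your argument applies. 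When $\In_{x'}(f_{p^e})\neq Z^{p^e}$, one shows directly that the new exponent $h_r$ is \emph{zero}, so the factor $I(H_r)^{h_r}$ is trivial and compatibility with $\m_rW^s$ reduces to compatibility with the strict transform of $\m_{r-1}W^s$, for which no control at $\xi_{H_r}$ is required. Concretely: if $\In_{x'}(f_{p^e})=Z^{p^e}+A_{p^e}$ with $A_{p^e}\neq 0$ not a $p^e$-th power, then $\nu_{\beta_{r-1}(x')}(a_{p^e})=p^e$, so well-adaptedness gives $Sl(p\P)(\beta_{r-1}(x'))=1$, hence $Sl(p\P)(y)=1$ (at most $1$ by Remark \ref{rmk:Slxy}, at least $1$ since $C_{r-1}\subset\Sing(\G_{r-1})$); if instead some middle initial term $A_jZ^{p^e-j}$ with $1\leq j\leq p^e-1$ is nonzero, then $\nu_{\beta_{r-1}(x')}(a_j)=j$ and Theorem \ref{rmk:x:restr} forces $\ord(\R_{\G,\beta})(\beta_{r-1}(x'))=1$, whence again $Sl(p\P)(y)=1$. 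In either case $h_r=s\big(Sl(p\P)(y)-1\big)=0$. One then finishes by checking that at any closed $x\in\Sing(\G_r)\cap H_r$ over $x'$, either $\beta_r(x)$ lies in the support of the strict transform of $\m_{r-1}$ --- in which case all $a_i^{(1)}$ vanish at $\beta_r(x)$ and the strict transform $z_1$ has order one there --- or it does not, in which case $\m_rW^s$ is locally trivial and there is nothing to prove.
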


\begin{parrafo}The two previous Main Theorems will lead us to the notion of strong monomial case, to be discussed now in Part II. The main result concerning the strong monomial case will be given by
Theorem \ref{claim:gamma:res} which ensures resolution of singularities in positive characteristic if one could achieve some numerical conditions.
Condition which are achievable for two dimensional schemes.

\end{parrafo}

\end{part}


\begin{part}{Strong monomial case.}\label{sec999}
\setcounter{theorem}{0}
\setcounter{section}{8}

\vspace{0.15cm}

\begin{parrafo}
In this second part we address the proof of \ref{pi14}, (2).
Given a simple differential algebra $\G$ in $V^{(d)}$ and a sequence of transformations, say
\begin{equation}\label{seqsmc}
\xymatrix@R=0pc@C=0pc{
\G & & & & & \G_1 &  & & & &  &  & & & &   \G_r\\
V^{(d)}  &  & & & &   V^{(d)}_{1}\ar[lllll]_{\pi_1}   & & & & & \dots \ar[lllll] &  & & & &   V^{(d)}_{r},\ar[lllll]_{\pi_r}  
}\end{equation}
we have defined:
\begin{enumerate}
\item[$\bullet$] a function $v-ord^{(d-1)}(\G_r)(-):\Sing(\G_r)\longrightarrow \mathbb{Q}$ (see \ref{def_vord}).
\item[$\bullet$] a monomial algebra $\m_r W^s$ in $V^{(d)}_r$, called tight monomial algebra, supported on the exceptional locus of the sequence (see \ref{asert}).
\end{enumerate}

We begin this part by showing that the inequality
\begin{equation}\label{eqdesg}
v-ord^{(d-1)}(\G_r)(x)\geq \ord(\m_r W^s)(x)
\end{equation}
holds at any closed point $x\in\Sing(\G_r)$. The main objective is to study the case in which equality is achieved at any closed point of $\Sing(\G_r)$. This will be called \emph{strong monomial case} in Definition \ref{def_smc}, and we prove that:
\begin{enumerate}
\item The strong monomial case is stable under transformations (Proposition \ref{prop:stab:smc}).
\item It parallels the so called monomial case in characteristic zero. Namely that if $\G_r$ is in the strong monomial case, then a combinatorial resolution leads to a resolution of $\G$ (Theorem \ref{claim:gamma:res}).
\end{enumerate}

\begin{remark}\label{rmktom}
Let $p\P(\beta_r,z,f_{p^e})$ be a $p$-presentation compatible with $\m_r W^s$ (Definition \ref{defmon}) and well-adapted to $\G_r$ at $\x=\beta_r(x)$ for $x\in\Sing(\G_r)$ (Definition \ref{defweladap}). We denote $\x=\beta_r(x)$ along this section. In this case, $z$ must be an element of order one at the local ring $\calo_{V^{(d)},x}$ (see Remark \ref{rmk56}), and $f_{p^e}(z)=z^{p^e}+a_1z^{p^e-1}+\dots+a_{p^e}$, where $a_jW^j\in\m_r W^s$ for $j=1,\dots,p^e$. In addition, $(\R_{\G,\beta})_r\subset \m_r W^s$ (Definition \ref{defmon} (2)).

It follows from the previous discussion that $\ord((\R_{\G,\beta})_r)(\x)\geq\ord(\m_rW^s)(\x)$ and $\frac{\nu_{\x}(a_j)}{j}\geq \ord(\m_r W^s)(\x)$ for $j=1,\dots, p^e$. In particular,
$$v-ord^{(d-1)}(\G_r)(x)=\min\Big\{\frac{\nu_{\x}(a_{p^e})}{p^e},\ord((\R_{\G,\beta})_r)(\x)\Big\}\geq \ord(\m_r W^s)(\x).$$
This proves (\ref{eqdesg}) for any $p\P(\beta_r,z,f_{p^e})$ as above. 
\end{remark}

\end{parrafo}

\begin{parrafo}\label{par:moncas}
We shall say that a Rees algebra is within the \emph{monomial case} when its elimination algebra is monomial, as stated in Theorem \ref{thm:BV}. 
We shall assume here that $\G_r$ is in the monomial case. Namely, that $(\R_{\G,\beta})_r=\mathcal{N}_rW^s$ is a monomial algebra. 
Without lost of generality fix $s\in\mathbb{Z}$ as in (\ref{asert}) so
\begin{equation}\label{2monos}
\mathcal{N}_rW^s=I(H_1)^{\alpha_1}\dots I(H_r)^{\alpha_r}W^s\ \hbox{ and }\ 
\m_r W^s=I(H_1)^{h_1}\dots I(H_r)^{h_r} W^s \ (\hbox{see }\ref{asert}),
\end{equation}
and note that the monomial $\m_r$ divides $\mathcal{N}_r$ (i.e., $\alpha_i\geq h_i$ for any $i=1\dots, r$).
\end{parrafo}

\begin{definition}\label{def_smc} $\G_r$ is said to be within the \emph{strong monomial case at a closed point $x\in \Sing(\G_r)$} if
$$v-ord^{(d-1)}(\G_r)(x)= \ord(\m_r W^s)(x).$$
We say that $\G_r$ is within the \emph{strong monomial case} if this condition holds at any \underline{{\sl closed}} point $x\in \Sing(\G_r)$. 
\end{definition}

The following provides a characterization of  this case.

\begin{theorem}\label{thm:char:stron}{\rm({\bf Characterization of the strong monomial case)}}. 
Fix a closed point $x\in\Sing(\G_r)$. Let  $p\P(\beta_r,z,f_{p^e})$ be well-adapted to $\G_r$ at $\x=\beta_r(x)$ and compatible with the tight monomial algebra $\m_r W^s$. The algebra $\G_r$ is in the strong monomial case at $x$ if and only if one of the following conditions holds in an open neighborhood, either
\begin{itemize}
\item[(i)]   $(\R_{\G,\beta})_r=\m_r W^s$ , or
\item[(ii)]  The $\calo_{V^{(d-1)}}$-algebra spanned by $a_{p^e}W^{p^e}$, namely $\calo_{V^{(d-1)}}[a_{p^e}W^{p^e}]$, has the same integral closure as $\calo_{V^{(d-1)}}[\m_r W^s]$.
\end{itemize}
The first condition holds if and only if $v-ord^{(d-1)}(\G_r)(x)=\ord((\R_{\G,\beta})_r)(\x)$.
\end{theorem}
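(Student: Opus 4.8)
The plan is to reduce everything to the explicit formula for $\vord$ from Corollary~\ref{def_vord} and the two inequalities recorded in Remark~\ref{rmktom}, which already contain all the geometric content. Write the monic polynomial of the given ($\m_rW^s$-compatible, well-adapted) $p$-presentation as $f_{p^e}=z^{p^e}+a_1z^{p^e-1}+\dots+a_{p^e}$, put $\x=\beta_r(x)$, and set
$$A=\frac{\nu_{\x}(a_{p^e})}{p^e},\qquad N=\ord\big((\R_{\G,\beta})_r\big)(\x),\qquad m=\ord(\m_r W^s)(\x).$$
Since $p\P$ is well-adapted at $\x$, Corollary~\ref{def_vord} gives $\vord^{(d-1)}(\G_r)(x)=\min\{A,N\}$; since $p\P$ is compatible with $\m_rW^s$, Remark~\ref{rmktom} gives $m\le A$ and $m\le N$, hence $m\le\vord^{(d-1)}(\G_r)(x)$. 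Therefore the strong monomial case at $x$ (equality here) holds if and only if $\min\{A,N\}\le m$, i.e. if and only if $A=m$ or $N=m$. The rest of the proof consists in identifying ``$N=m$'' with condition (i) and ``$A=m$'' with condition (ii).

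For the first identification, recall from \ref{par:moncas} that $(\R_{\G,\beta})_r=\mathcal{N}_rW^s$ is monomial, supported on the same normal-crossings divisor $\{H_1,\dots,H_r\}$ as $\m_rW^s$, with the same $s$, and with $\m_r$ dividing $\mathcal{N}_r$. Because of the normal crossings, $\ord$ at $\x$ of either algebra is $s^{-1}$ times the sum of the exponents of the components through $\x$; so $N=m$ together with $\alpha_i\ge h_i$ forces $\alpha_i=h_i$ for every $H_i$ through $\x$, whence $(\R_{\G,\beta})_r$ and $\m_rW^s$ agree on a neighborhood of $\x$, which is (i); the converse is immediate. Moreover, if $N=m$ then $\vord^{(d-1)}(\G_r)(x)=\min\{A,m\}=m=N$, so inside the strong monomial case (i) holds exactly when $\vord^{(d-1)}(\G_r)(x)=\ord((\R_{\G,\beta})_r)(\x)$, which is the final assertion of the theorem.

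For the second identification, work locally at $\x$, where the monomial ideal $\m_r$ is principal, say $\m_r=\id{M}$ with $M$ a monomial in a regular system of parameters, so $\m_rW^s=\calo_{V_r^{(d-1)}}[\id{M}W^s]$. Compatibility of $p\P$ with $\m_rW^s$ (Definition~\ref{defmon} together with Remark~\ref{kr22}) says $a_{p^e}^{\,s}\in\m_r^{p^e}=\id{M^{p^e}}$, so $a_{p^e}^{\,s}=M^{p^e}b$ for some $b\in\calo_{V_r^{(d-1)},\x}$ with $\nu_{\x}(b)=s\,\nu_{\x}(a_{p^e})-p^e\,\nu_{\x}(M)$; this exponent vanishes exactly when $A=m$. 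If $A=m$, then $b$ is a unit near $\x$ and $\id{a_{p^e}^{\,s}}=\id{M^{p^e}}$ there; since principal ideals in a regular local ring are integrally closed, the characterization of integral closures in Remark~\ref{kr22} then gives that $\calo_{V_r^{(d-1)}}[a_{p^e}W^{p^e}]$ and $\calo_{V_r^{(d-1)}}[\m_rW^s]$ have the same integral closure, i.e. (ii); conversely (ii) forces $\nu_{\x}(a_{p^e})/p^e=\ord(\m_rW^s)(\x)$, i.e. $A=m$. Combining: strong monomial case at $x$ $\iff$ $A=m$ or $N=m$ $\iff$ (ii) or (i), as claimed.

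I expect the main obstacle to be the implication ``strong monomial $\Rightarrow$ (i) or (ii)'' precisely in the subcase $N\neq m$, where one must upgrade the pointwise numerical equality $A=m$ at $\x$ to an equality of integral closures of Rees algebras on a whole neighborhood; this is exactly the unit-coefficient argument above, and it is here that one uses that $\m_r$ is locally principal and that $p\P$ is well-adapted (so that the inequalities of Remark~\ref{rmktom} and the dichotomy of Theorem~\ref{rmk:x:restr} are available). One should also dispose of the boundary case $x\notin\bigcup H_i$: there $m=0$ while $\vord^{(d-1)}(\G_r)(x)\ge1$ for $x\in\Sing(\G_r)$ (Remark~\ref{rmk56}), so both sides of the equivalence fail, consistently with the statement.
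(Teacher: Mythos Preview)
Your proof is correct and follows essentially the same route as the paper: reduce to the numerical identity $\vord^{(d-1)}(\G_r)(x)=\min\{A,N\}$ from Corollary~\ref{def_vord}, use the inequalities $m\le A$, $m\le N$ from Remark~\ref{rmktom}, and then identify $N=m$ with (i) via the exponent comparison $\alpha_i\ge h_i$ and $A=m$ with (ii) via a unit-coefficient argument. The only cosmetic difference is in (ii): the paper factors $a_{p^e}=\m_r^{[p^e]}a'$ through the truncated monomial of Remark~\ref{rmkintM} and appeals to the inequality $\ord(\m_rW^s)\le\ord(\m_r^{[p^e]}W^{p^e})$ there, whereas you factor $a_{p^e}^{\,s}=M^{p^e}b$ directly; both produce the same unit and the same integral-closure identification.

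One small correction to your closing remark: the boundary case $x\notin\bigcup H_i$ does not actually arise. In the monomial case (\ref{par:moncas}) one has $\beta_r(\Sing(\G_r))\subset\Sing((\R_{\G,\beta})_r)=\Sing(\mathcal{N}_rW^s)$ by P1) in \ref{csc}, so any $x\in\Sing(\G_r)$ lies over a point of the exceptional support. Moreover, had the case occurred, your claim that ``both sides fail'' would be wrong: locally both $\mathcal{N}_r$ and $\m_r$ would be trivial, so (i) would hold vacuously. This is harmless for the proof itself, but the paragraph should be dropped.
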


\begin{proof}
(i) Fix $x\in\Sing(\G_r)$ and denote by $E_x=\{H_{i_1},\dots,H_{i_\ell}\}$ the set of exceptional hypersurfaces containing $x$. Let $\Lambda_x=\{i_1,\dots,i_\ell\}$ be the set of indexes of $E_x$.

If  $v-ord^{(d-1)}(\G_r)(x)=\ord((\R_{\G,\beta})_r)(\x)$ and  $\G_r$ is within the strong monomial case at $x\in\Sing(\G_r)$, then
$$\sum_{i\in\Lambda_x}\alpha_i=\ord((\R_{\G,\beta})_r)(\x)=\ord(\m_r W^s)(\x)=\sum_{i\in\Lambda_x}h_i.$$
Since $\alpha_i\geq h_i$ for any $i$, then $h_i=\alpha_i$ for any $i\in\Lambda_x$. So $\m_r W^s=(\R_{\G,\beta})_r$  at $\x$.

Conversely, if $\m_r W^s=(\R_{\G,\beta})_r$ locally at $\x$, then the inequality $v-ord^{(d-1)}(\G_r)(x)\geq\ord(\m_r W^s)(x)$ in (\ref{eqdesg}) must be an equality since 
$v-ord^{(d-1)}(\G_r)(x)=\min\Big\{\frac{\nu_{\x}(a_{p^e})}{p^e},\ord((\R_{\G,\beta})_r)(\x)\Big\}$, and $\frac{\nu_\x(a_j)}{j}\geq \ord(\m_rW^s)(\x)=\ord(\R_{\G,\beta})(\x)$ (see Remark \ref{rmktom}).

(ii) Suppose now that $v-ord^{(d-1)}(\G_r)(x)=\frac{\nu_\x(a_{p^e})}{p^e}<\ord((\R_{\G,\beta})_r)(\x)$. 
%
%
Recall that $a_{p^e}W^{p^e}\in\m_r W^s$, so $a_{p^e}W^{p^e}\in\m_r^{[p^e]} W^{p^e}$, where $\m_r^{[p^e]}$ is the monomial ideal defined in Remark \ref{rmkintM}. 

Set $a_{p^e}=\m_r^{[p^e]}a'$, for some $a'\in\calo_{V^{(d-1)},\x}$. We claim that $a'$ is a unit, and that $\m_rW^s$ and $\m_r^{[p^e]}W^{p^e}$ have the same integral closure.

Assume first that $\G_r$ is in the strong monomial case at $x\in\Sing(\G_r)$, so 
$\ord(\m_rW^s)(x)=\frac{\nu_\x(a_{p^e})}{p^e}$. Then 
 $\ord(\m_r W^s)(\x)=\frac{\nu_\x(a_{p^e})}{p^e}\geq\ord(\m_r^{[p^e]}W^{p^e})(\x)$. Remark \ref{rmkintM} implies that equality must hold and  both monomial algebras, $\m_r^{[p^e]}W^{p^e}$ and $\m_r W^s$, have the same integral closure. In particular,  $a'$ is a unit, and the algebra spanned by $a_{p^e}W^{p^e}$ has the same integral closure as that of $\m_r W^s$.
 
 Conversely, if the algebras generated by $a_{p^e}W^{p^e}$ and $\m_r W^s$ have the same integral closure,  we claim that $v-ord^{(d-1)}(\G_r)(x)=\frac{\nu_\x(a_{p^e})}{p^e}$. To show this, use (\ref{eqdesg}) to check that
 $\frac{\nu_\x(a_{p^e})}{p^e}\geq v-ord^{(d-1)}(\G)(x)\geq \ord(\m_r W^s)(\x)=\frac{\nu_\x(a_{p^e})}{p^e}.$
 So, finally, $\G_r$ is in the strong monomial case at $x\in\Sing(\G_r)$.
\end{proof}

\begin{remark}\label{rmk:smc}
Let $\G_r$ be in the strong monomial case at the closed point $x\in\Sing(\G_r)$. Then we claim that the following conditions hold for a $p$-presentation $p\P$ in the conditions of Theorem \ref{thm:char:stron}:
\begin{enumerate}
\item In case (i), the transversal parameter $z$ defines a hypersurface of maximal contact. In particular, there exists an open neighborhood of $\x$ where $(\R_{\G,\beta})_r=\mathcal{N}_rW^s=\m_r W^s$.
\item In case (ii), the monomial algebra can be described as $\m_r W^{p^e}$ (i.e., $s=p^e$) where $\m_r$ is not a $p^e$-th power. 
\end{enumerate}

For (1), note that $a_jW^j\in(\R_{\G,\beta})_r$ and that $zW$ fulfills the integral condition 
$$\lambda^{p^e}+(a_1W^1)\lambda^{p^e-1}+\dots+(a_{p^e}-f_{p^e}(z))W^{p^e}=0$$

(2) follows from the fact that $p\P$ is well-adapted to $\G_r$ at $\x$ and compatible with $\m_r W^s$ (see Section \ref{sec555}). That is, $\In_\x(a_{p^e})$ is not a $p^e$-th power and $a_{p^e}W^{p^e}=\m_r W^{p^e}$ ($a_{p^e}=\m_r$).

\end{remark}

\begin{lemma}\label{Lem:vord:C}
Let $\G_r$ be in the strong monomial case, and set  $(\R_{\G,\beta})_r=\mathcal{N}_rW^s$ as in \ref{par:moncas}.
Fix $x,y\in\Sing(\G_r)$ so that $\x=\beta_r(x)$ is closed and $x\in\overline{y}$. Let $p\P(\beta_r,z,f_{p^e})$ be well-adapted at $\x$ and $\beta_r(y)$ and compatible with $\m_rW^s$.
 
\begin{enumerate} 
\item[{\bf(A)}] If  $v-ord^{(d-1)}(\G_r)(y)=\ord((\R_{\G,\beta})_r)(\beta_r(y))$, then
\begin{enumerate}
\item[(A1)] there is a dense open set $U\subset  \overline{y}$ so that 
$v-ord^{(d-1)}(\G_r)(x')=\ord((\R_{\G,\beta})_r)(\beta_r(x'))$ and  $\mathcal{N}_rW^s= \mathcal{M}_rW^s$ at any $x'\in U$.

\item[(A2)] $\ord((\R_{\G,\beta})_r)(\beta_r(y))=\ord(\mathcal{M}_rW^s)(\beta_r(y)).$
\end{enumerate}

\item[{\bf(B)}] If $v-ord^{(d-1)}(\G_r)(y)<\ord((\R_{\G,\beta})_r)(\beta_r(y))$, then at each closed point $x\in \overline{y}\ (\subset \Sing(\G_r))$:
$$v-ord^{(d-1)}(\G_r)(x)=\frac{\nu_\x(a_{p^e})}{p^e}<\ord((\R_{\G,\beta})_r)(\x).$$
\end{enumerate}
\end{lemma}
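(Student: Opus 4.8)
The plan is to reduce both statements to an elementary comparison of exponents. Write $(\R_{\G,\beta})_r=\mathcal{N}_rW^s=I(H_1)^{\alpha_1}\cdots I(H_r)^{\alpha_r}W^s$ and $\m_rW^s=I(H_1)^{h_1}\cdots I(H_r)^{h_r}W^s$ with $\alpha_j\geq h_j$ (see \ref{par:moncas}), and for a point $q$ set $\Lambda_q=\{j:q\in H_j\}$, freely identifying a point of $\Sing(\G_r)$ with its $\beta_r$-image. Three facts will be used repeatedly. (a) By normal crossings, $\ord(\mathcal{N}_rW^s)(q)=\frac1s\sum_{j\in\Lambda_q}\alpha_j$ and $\ord(\m_rW^s)(q)=\frac1s\sum_{j\in\Lambda_q}h_j$, and $\Lambda_q\subseteq\Lambda_{q'}$ whenever $q'$ specialises $q$. (b) Since $p\P$ is compatible with $\m_rW^s$ (Definition \ref{defmon}), $(\R_{\G,\beta})_r\subset\calo_{V^{(d-1)}_r}[\m_rW^s]$ and $a_{p^e}W^{p^e}\in\calo_{V^{(d-1)}_r}[\m_rW^s]$; the latter gives $\frac{\nu_q(a_{p^e})}{p^e}\geq\ord(\m_rW^s)(q)$ at every $q$ (cf. Remark \ref{rmktom}). (c) Whenever $p\P$ is well-adapted at a point $q'$, Corollary \ref{def_vord} and Theorems \ref{rmk:x:restr}, \ref{result43} give $v-ord^{(d-1)}(\G_r)(q')=Sl(p\P)(q')=\min\{\frac{\nu_{q'}(a_{p^e})}{p^e},\ \ord(\mathcal{N}_rW^s)(q')\}$, which is $\geq 1$ at points of $\Sing(\G_r)$ (Remark \ref{rmk56}). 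By Proposition \ref{simult:adap} a $p$-presentation with the properties assumed in the Lemma exists for the pair $(y,x)$ for every closed $x\in\overline{y}$, so it suffices to argue with the given data.

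\textbf{Part (A).} Assume $v-ord^{(d-1)}(\G_r)(y)=\ord((\R_{\G,\beta})_r)(\beta_r(y))$. I would take $U\subseteq\overline{y}$ to be the set of points $x'$ with $\beta_r(x')$ in the domain of $p\P$ and on no $H_j$ with $j\notin\Lambda_y$; this is a dense open of the irreducible set $\overline{y}$, and $\Lambda_{x'}=\Lambda_y$ for $x'\in U$, so $\ord(\mathcal{N}_rW^s)(\beta_r(x'))=\ord(\mathcal{N}_rW^s)(\beta_r(y))=v-ord^{(d-1)}(\G_r)(y)$ (the hypothesis of (A)). Because $\frac{\nu_{\beta_r(x')}(a_{p^e})}{p^e}\geq\frac{\nu_{\beta_r(y)}(a_{p^e})}{p^e}\geq Sl(p\P)(\beta_r(y))=v-ord^{(d-1)}(\G_r)(y)$ (monotonicity of $\nu$ and Remark \ref{rmk:Slxy}), fact (c) forces $Sl(p\P)(\beta_r(x'))=\ord(\mathcal{N}_rW^s)(\beta_r(x'))>0$, i.e. Case A) of \ref{rpcasos} holds at $\beta_r(x')$; hence $p\P$ is well-adapted there (Definition \ref{defweladap}) and $v-ord^{(d-1)}(\G_r)(x')=\ord((\R_{\G,\beta})_r)(\beta_r(x'))$. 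Choosing a closed point $x'_0\in U$ (closed points being dense), the strong monomial case at $x'_0$ gives $v-ord^{(d-1)}(\G_r)(x'_0)=\ord(\m_rW^s)(\beta_r(x'_0))$; combined with the equality just obtained at $x'_0$ and with $\Lambda_{x'_0}=\Lambda_y$, this reads $\sum_{j\in\Lambda_y}\alpha_j=\sum_{j\in\Lambda_y}h_j$, and with $\alpha_j\geq h_j$ it forces $\alpha_j=h_j$ for all $j\in\Lambda_y$. Thus $\mathcal{N}_rW^s$ and $\m_rW^s$ have the same localisation at every point of $U$ — this is (A1) — and evaluation at $\beta_r(y)$ yields $\ord((\R_{\G,\beta})_r)(\beta_r(y))=\ord(\m_rW^s)(\beta_r(y))$, which is (A2).

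\textbf{Part (B).} Assume $v-ord^{(d-1)}(\G_r)(y)<\ord((\R_{\G,\beta})_r)(\beta_r(y))$. By fact (c) at $\beta_r(y)$ the minimum there is the first term, so $v-ord^{(d-1)}(\G_r)(y)=\frac{\nu_{\beta_r(y)}(a_{p^e})}{p^e}<\frac1s\sum_{j\in\Lambda_y}\alpha_j$, and with fact (b), $\frac1s\sum_{j\in\Lambda_y}h_j\leq\frac{\nu_{\beta_r(y)}(a_{p^e})}{p^e}<\frac1s\sum_{j\in\Lambda_y}\alpha_j$; hence $h_{j_0}<\alpha_{j_0}$ for some $j_0\in\Lambda_y$. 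Let $x\in\overline{y}$ be a closed point; then $\beta_r(x)$ specialises $\beta_r(y)$, so $j_0\in\Lambda_y\subseteq\Lambda_x$. By fact (c) at $\x=\beta_r(x)$ and the strong monomial case at $x$, $\min\{\frac{\nu_{\x}(a_{p^e})}{p^e},\ \ord(\mathcal{N}_rW^s)(\x)\}=v-ord^{(d-1)}(\G_r)(x)=\frac1s\sum_{j\in\Lambda_x}h_j$. Were the minimum equal to $\ord(\mathcal{N}_rW^s)(\x)=\frac1s\sum_{j\in\Lambda_x}\alpha_j$, then $\sum_{j\in\Lambda_x}\alpha_j=\sum_{j\in\Lambda_x}h_j$ together with $\alpha_j\geq h_j$ would force $\alpha_{j_0}=h_{j_0}$, contradicting $h_{j_0}<\alpha_{j_0}$. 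Hence $\frac{\nu_{\x}(a_{p^e})}{p^e}<\ord((\R_{\G,\beta})_r)(\x)$ and $v-ord^{(d-1)}(\G_r)(x)=\frac{\nu_{\x}(a_{p^e})}{p^e}$, which is (B).

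\textbf{Main obstacle.} The only genuinely delicate point is the descent in (A) from the non-closed $y$ to a closed point inside a dense open $U\subseteq\overline{y}$ along which $\Lambda_{\cdot}$, the value $\ord(\mathcal{N}_rW^s)$, the validity of Case A), and the identity $v-ord^{(d-1)}=Sl(p\P)$ are simultaneously controlled; this is where the monotonicity of $Sl(p\P)$ under specialisation (Remark \ref{rmk:Slxy}), the equality $Sl(p\P)=v-ord^{(d-1)}$ for well-adapted $p$-presentations (Theorem \ref{result43}), and the propagation of well-adaptedness to nearby points all get used. Everything else is the elementary exponent comparison above, and part (B) in particular is direct.
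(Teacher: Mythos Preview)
Your proof is correct and follows essentially the same approach as the paper's: in (A) you pass to the dense open of $\overline{y}$ missing the extra exceptional components, where $\Lambda_{x'}=\Lambda_y$, compare $\frac{\nu(a_{p^e})}{p^e}$ and $\ord(\mathcal{N}_rW^s)$ there, and then force $\alpha_j=h_j$ for $j\in\Lambda_y$ via the strong monomial equality at a closed point; in (B) you argue by contradiction through the same exponent comparison. Two small variations worth noting: first, in (A) you use the \emph{given} $p$-presentation at every $x'\in U$ and observe that Case~A) of \ref{rpcasos} holds there automatically, which makes $p\P$ well-adapted at $\beta_r(x')$ by Definition~\ref{defweladap}; the paper instead re-invokes Proposition~\ref{simult:adap} to pick a fresh well-adapted $p$-presentation at each $x'$ --- your route is slightly cleaner. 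Second, in (B) you isolate a single index $j_0\in\Lambda_y$ with $h_{j_0}<\alpha_{j_0}$ and carry it to $\Lambda_x$, whereas the paper quotes Remark~\ref{rmk:smc}(i) to get $(\R_{\G,\beta})_r=\m_rW^s$ in a full neighborhood of $\x$ and then derives the same contradiction at $\beta_r(y)$; the underlying exponent count is identical.
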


\begin{proof}
\quad {\bf(A)} Note that (A2) follows from (A1) (since $\mathcal{N}_rW^s= \mathcal{M}_rW^s$ at any $\x\in U$).

Let $E_y=\{H_{j_1},\dots,H_{j_\ell}\}$ denote the set of exceptional hypersurfaces containing $y$ with set of indexes $\Lambda_y=\{j_1,\dots,j_\ell\}$. Set $\Lambda_y^{-}=\{1,\dots,r\}\setminus\Lambda_y$. Consider a closed point $x'$ so that $x'\in \overline{y}\setminus\bigcup_{i\in\Lambda_y^{-}}H_i.$
Fix a $p$-presentation $p\P(\beta_r,z,f_{p^e})$ well-adapted to $\G_r$ both at $\beta_r(x')$ and $\beta_r(y)$. Note that, for such $x'$, and since $(\R_{\G,\beta})_r$ is a monomial algebra supported on the exceptional components
$$\ord((\R_{\G,\beta})_r)(\beta_r(x'))=\ord((\R_{\G,\beta})_r)(\beta_r(y))\leq\frac{\nu_{\beta_r(y)}(a_{p^e})}{p^e}\leq\frac{\nu_{\beta_r(x')}(a_{p^e})}{p^e}.$$ 
So in this case, $v-ord^{(d-1)}(\G_r)(x')=\ord((\R_{\G,\beta})_r)(\beta_r(x'))$ and since $\G_r$ is in the strong monomial case,  $\ord(\m_r W^s)(\beta_r(x'))=v-ord^{(d-1)}(\G_r)(x')=\ord((\R_{\G,\beta})_r)(\beta_r(x'))$. Finally, argue as in the Proof of Theorem \ref{thm:char:stron} to conclude that $\alpha_i=h_i$ for all $i\in\Lambda_y$. Hence $(\R_{\G,\beta})_r=\mathcal{N}_rW^s=\m_r W^s$. In particular,
$$\ord((\R_{\G,\beta})_r)(\beta_r(y))=\ord(\m_r W^s)(\beta_r(y)).$$

{\bf(B)} Fix the closed point  $x\in \overline{y}$ and a $p$-presentation $p\P(\beta,z,f_{p^e})$ well-adapted to $\x=\beta_r(x)$ and $\beta_r(y)$ and compatible with $\m_rW^s$. Assume, on the contrary, that $v-ord^{(d-1)}(\G_r)(x)=\ord((\R_{\G,\beta})_r)(\x)\big(=\ord(\m_r W^s)(\x)\big)$. Remark \ref{rmk:smc} (i) ensures that $(\R_{\G,\beta})_r= \mathcal{M}_rW^s$ in a neighborhood of $\x$. In particular, $\ord((\R_{\G,\beta})_r)(\beta_r(y))=\ord(\mathcal{M}_rW^s)(\beta_r(y)),$
so $$v-ord^{(d-1)}(\G_r)(y)=\frac{\nu_{\beta_r(y)}(a_{p^e})}{p^e}<\ord((\R_{\G,\beta})_r)(\beta_r(y))=  \ord(\mathcal{M}_rW^s)(\beta_r(y)),$$ 
which is in contradiction with the fact that $a_{p^e}W^{p^e}\in \mathcal{M}_rW^s$ locally at $y$ (see Remark \ref{rmktom}).
\end{proof}

\begin{corollary}\label{smcgen}
Let $\G_r$ be within the strong monomial case at  a closed point $x\in\Sing(\G_r)$. Let  $y\in\Sing(\G_r)$ be a point so that $x\in\overline{y}$. Then,
$$v-ord^{(d-1)}(\G_r)(y)=\ord(\m_r W^s)(y).$$
\end{corollary}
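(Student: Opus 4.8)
The plan is to reduce the statement to Lemma \ref{Lem:vord:C} together with the comparison inequality (\ref{eqdesg}). Since $\x=\beta_r(x)$ is closed and $x\in\overline{y}$, Proposition \ref{simult:adap} (applied with $C=\overline{y}$) furnishes a $p$-presentation $p\P(\beta_r,z,f_{p^e})$, with $f_{p^e}(z)=z^{p^e}+a_1z^{p^e-1}+\dots+a_{p^e}$, which is well-adapted to $\G_r$ simultaneously at $\x$ and at $\beta_r(y)$ and compatible with the tight monomial algebra $\m_r W^s$. First I would record, exactly as in Remark \ref{rmktom}, that such a $p$-presentation already yields
$$v-ord^{(d-1)}(\G_r)(y)\ \geq\ \ord(\m_r W^s)(y),$$
where on the right one may equally evaluate the order of $\m_r W^s$ on $V^{(d-1)}_r$, since $\m_r W^s$ on $V^{(d)}_r$ is its pull-back along $\beta_r$. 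It then remains to establish the reverse inequality, and for this I would distinguish the two cases of Corollary \ref{def_vord}.

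In the first case, $v-ord^{(d-1)}(\G_r)(y)=\ord((\R_{\G,\beta})_r)(\beta_r(y))$, which is case (A) of Lemma \ref{Lem:vord:C}; its part (A2) identifies this value with $\ord(\m_r W^s)(\beta_r(y))=\ord(\m_r W^s)(y)$, and we are done. In the second case, $v-ord^{(d-1)}(\G_r)(y)<\ord((\R_{\G,\beta})_r)(\beta_r(y))$, so well-adaptedness at $\beta_r(y)$ together with Corollary \ref{def_vord} gives $v-ord^{(d-1)}(\G_r)(y)=\nu_{\beta_r(y)}(a_{p^e})/p^e$, while part (B) of Lemma \ref{Lem:vord:C} gives $v-ord^{(d-1)}(\G_r)(x)=\nu_{\x}(a_{p^e})/p^e<\ord((\R_{\G,\beta})_r)(\x)$. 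Combining the latter with the hypothesis that $\G_r$ is in the strong monomial case at $x$, Theorem \ref{thm:char:stron} rules out its alternative (i) and forces alternative (ii), so that $\calo_{V^{(d-1)}}[a_{p^e}W^{p^e}]$ and $\calo_{V^{(d-1)}}[\m_r W^s]$ have the same integral closure on some open neighbourhood of $\x$. Because $x\in\overline{y}$, the point $\beta_r(y)$ is a generization of $\x$ and hence lies in every such neighbourhood, so the two algebras remain integrally equivalent at $\beta_r(y)$; comparing their order functions there, and using that $\ord(-)$ depends only on the integral closure (Remark \ref{kr22}), gives $\nu_{\beta_r(y)}(a_{p^e})/p^e=\ord(\m_r W^s)(\beta_r(y))=\ord(\m_r W^s)(y)$, i.e. $v-ord^{(d-1)}(\G_r)(y)=\ord(\m_r W^s)(y)$.

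The argument is short once Lemma \ref{Lem:vord:C} is in hand, and I do not expect a genuine obstacle. The two points that require any care are: (i) checking that one may take a single $p$-presentation well-adapted at both $\x$ and $\beta_r(y)$ and simultaneously compatible with $\m_r W^s$, which is precisely the content of Proposition \ref{simult:adap}; and (ii) the descent of the ``same integral closure'' property from the closed point $\x$ to the non-closed point $\beta_r(y)$, which is the only place where the hypothesis $x\in\overline{y}$ is really used and which rests on the elementary fact that the generic point of an irreducible subscheme passing through $\x$ belongs to every open neighbourhood of $\x$ (so that an identity of Rees algebras valid on a neighbourhood of $\x$ persists at $\beta_r(y)$).
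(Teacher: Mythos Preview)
Your proof is correct and follows essentially the same route as the paper's own argument: both split into the two cases of Corollary~\ref{def_vord}, dispatch case~(A) by invoking Lemma~\ref{Lem:vord:C}~(A2), and in case~(B) combine Lemma~\ref{Lem:vord:C}~(B) with alternative~(ii) of Theorem~\ref{thm:char:stron} and then localize the resulting equality of integral closures from $\x$ to the generization $\beta_r(y)$. Your write-up is in fact somewhat more explicit than the paper's, in that you spell out the appeal to Proposition~\ref{simult:adap} for the simultaneous $p$-presentation and make the generization step in~(B) overt, but the substance is identical.
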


\begin{proof}
\begin{enumerate}
\item[(A)] When $v-ord^{(d-1)}(\G_r)(y)=\ord((\R_{\G,\beta})_r)(\beta_r(y))$ the assertion is (A2) in Lemma \ref{Lem:vord:C}.
\item[(B)] Assume that $v-ord^{(d-1)}(\G_r)(y)=\frac{\nu_{\beta_r(y)}(a_{p^e})}{p^e}<\ord((\R_{\G,\beta})_r)(\beta_r(y))$. At the closed point $x\in\overline{y}$, Lemma \ref{Lem:vord:C} (B) ensures that $\ord(\m_r W^s)(\x)=v-ord^{(d-1)}(\G_r)(\x)=\frac{\nu_\x(a_{p^e})}{p^e}<\ord((\R_{\G,\beta})_r)(\x)$. Theorem \ref{thm:char:stron} asserts that, locally at $\x$,  the algebras generated by $a_{p^e}W^{p^e}$ and $\m_r W^s$ have the same integral closure, so 
$\frac{\nu_{\beta_r(y)}(a_{p^e})}{p^e}=\ord(\m_r W^s)(\beta_r(y)).$
\end{enumerate}
\end{proof}

\begin{parrafo}
Assume that $\G_r$ is in the strong monomial case. Corollary \ref{smcgen} says that both  functions $v-ord^{(d-1)}(\G_r)(-) $ and $\ord(\m_r W^s)(-)$ take the same value at any point of $\Sing(\G_r)$. In particular, whenever $\G_r$ is in the strong monomial case, the function $v-ord^{(d-1)}(\G_r)$ is upper-semicontinous. 

\vspace{0.15cm}

We now prove that the strong monomial case is stable under transformations.

\end{parrafo}

\begin{remark}\label{krx1}
 When $\G_r$ is within the strong monomial case, and  $C\subset\Sing(\G_r)$ is a permissible center, then Corollary \ref{smcgen} shows that $\beta_r(C)$ is also a permissible center for the algebra generated by $\m_r W^s$ (i.e., $\beta_r(C)\Sing(\m_rW^s)$). In particular, the transform of the tight monomial algebra can be defined.
\end{remark}

\begin{lemma}\label{smctight}
Assume that $\G_r$ is in the strong monomial case. Let $C\subset\Sing(\G_r)$ be an irreducible permissible center. Let $V_r^{(d)}\overset{\ \pi_C}{\longleftarrow}V_{r+1}^{(d)}$ be the monoidal transformation with center $C$. Denote by $\m_{r+1}' W^s$ the transform of $\m_r W^s$ and by $\m_{r+1} W^s$ the tight monomial algebra of $\G_{r+1}$.
Then, $\m_{r+1}' W^s=\m_{r+1}W^s$.
\end{lemma}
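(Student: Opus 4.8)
The statement asserts that taking the transform of the tight monomial algebra commutes with the passage to the tight monomial algebra of $\G_{r+1}$. My plan is to compare the exponent $\gamma$ of the new exceptional divisor $H$ produced on both sides. First I would recall that the tight monomial algebra is determined by the numbers $q_{H_i} = v\text{-}\ord^{(d-1)}(\G_{i-1})(\xi_{C_i}) - 1$, so $\m_{r+1}W^s$ differs from the monomial extension of $\m_rW^s$ only in the exponent of $H$, which by definition is $s\cdot(v\text{-}\ord^{(d-1)}(\G_r)(\xi_C) - 1)$, where $\xi_C$ is the generic point of $C$. On the other hand, the transform $\m'_{r+1}W^s$ picks up the exponent of $H$ coming from the usual transformation rule for monomial (Rees) algebras under the monoidal transformation with center $\beta_r(C)$; by Remark \ref{krx1} this center is permissible for $\m_rW^s$, so $\m'_{r+1}W^s$ is well-defined, and the exponent of $H$ in $\m'_{r+1}W^s$ is $s\cdot(\ord(\m_rW^s)(\xi_{\beta_r(C)}) - 1)$. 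So the whole lemma reduces to the identity
$$v\text{-}\ord^{(d-1)}(\G_r)(\xi_C) = \ord(\m_rW^s)(\xi_{\beta_r(C)}),$$
i.e. to showing that the two functions agree not only at closed points of $\Sing(\G_r)$ but also at the generic point $\xi_C$.

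The second step is to establish exactly that identity, which is where Corollary \ref{smcgen} does the work: since $\G_r$ is in the strong monomial case, for every closed point $x \in \Sing(\G_r)$ we have $v\text{-}\ord^{(d-1)}(\G_r)(x) = \ord(\m_rW^s)(x)$, and Corollary \ref{smcgen} extends this to every (not necessarily closed) point $y$ with $x \in \overline{y}$. Applying this to $y = \xi_C$ and any closed point $x \in C \subset \Sing(\G_r)$ gives $v\text{-}\ord^{(d-1)}(\G_r)(\xi_C) = \ord(\m_rW^s)(\xi_C)$; and since $\m_rW^s$ is (the pullback of) a monomial algebra supported on exceptional components of $V_r^{(d-1)}$, the order $\ord(\m_rW^s)$ is constant along fibers of $\beta_r$ over exceptional strata, so $\ord(\m_rW^s)(\xi_C) = \ord(\m_rW^s)(\xi_{\beta_r(C)})$. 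This yields the desired equality of exponents.

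Finally I would assemble the two pieces: both $\m_{r+1}W^s$ and $\m'_{r+1}W^s$ are monomial algebras on $V_{r+1}^{(d)}$ supported on $\{H_1,\dots,H_r,H\}$, with the same exponents $h_i$ (for $i \le r$, the transformation does not touch the $H_i$-exponents, as both constructions simply pull back $\m_r$ via $\pi_C$) and with the same $H$-exponent $\gamma = s\cdot(v\text{-}\ord^{(d-1)}(\G_r)(\xi_C)-1) = s\cdot(\ord(\m_rW^s)(\xi_{\beta_r(C)})-1)$. Since Rees algebras are considered up to integral closure and these two have literally the same underlying monomial ideal in each degree, they coincide: $\m'_{r+1}W^s = \m_{r+1}W^s$. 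The main obstacle I anticipate is purely bookkeeping: one must be careful that the exponent attached to $H$ in the transformed algebra $\m'_{r+1}W^s$ really is governed by $\ord(\m_rW^s)(\xi_{\beta_r(C)})$ rather than by some order taken upstairs in $V_r^{(d)}$, and that passing from $\xi_C$ down to $\xi_{\beta_r(C)}$ does not change the value — this is exactly the point where one invokes that $\m_rW^s$ lives on $V_r^{(d-1)}$ (being the pullback of a monomial algebra there) together with Corollary \ref{smcgen}. Once that compatibility is pinned down, the argument is immediate.
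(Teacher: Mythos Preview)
Your proposal is correct and follows essentially the same approach as the paper's own proof: both reduce the lemma to comparing the exponent of the new exceptional hypersurface in $\m_{r+1}W^s$ versus $\m'_{r+1}W^s$, and both invoke Corollary \ref{smcgen} to obtain the key equality $v\text{-}\ord^{(d-1)}(\G_r)(\xi_C)=\ord(\m_rW^s)(\xi_{\beta_r(C)})$. Your additional remark that $\ord(\m_rW^s)(\xi_C)=\ord(\m_rW^s)(\xi_{\beta_r(C)})$ because $\m_rW^s$ is a pullback from $V_r^{(d-1)}$ makes explicit an identification the paper leaves implicit, but otherwise the arguments coincide.
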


\begin{proof} 

By definition, the tight monomial algebra of the transform, say $\G_r$ , is of the form
$$\m_{r+1}W^s=I(H_1)^{h_1}\dots I(H_r)^{h_r}I(H_{r+1})^{h_{r+1}}W^s,$$
where the $H_j$ are the strict transforms of the previous exceptional hypersurfaces ($j=1,\dots,r$) and $H_{r+1}$ is the new exceptional hypersurface introduced by $\pi_C$. Recall that $h_{r+1}=q_{H_{r+1}}\cdot s$ where $q_{H_{r+1}}=v-ord^{(d-1)}(\G_r)(\xi_{C})-1$, and $\xi_{C}$ denotes the generic point of $C$.
On the other hand,
$$\m_{r+1}'W^s=I(H_1)^{h_1}\dots I(H_r)^{h_r}I(H_{r+1})^{\gamma}W^s,$$
where $\frac{\gamma}{s}=\ord(\m_r W^s)(y)-1$ and $y$ denote the generic point of $\beta_r(C)$.
Corollary \ref{smcgen} asserts that in the strong monomial case $v-ord^{(d-1)}(\G_r)(\xi_C)=\ord(\m_r W^s)(y)$. Thus, $\gamma=q_{H_{r+1}}$, and hence $\m_{r+1}' W^s=\m_{r+1}W^s$.
\end{proof}

\begin{proposition}{\rm ({\bf $\tau=1$-stability of the strong monomial case})}. \label{prop:stab:smc}
Suppose that $\G_r$ is within the strong monomial case. Let $C$ be a permissible center. Assume that  $\tau_{\G_r,x}=1$ at a closed point $x\in C$. Consider the monoidal transformation of center $C$, say $V_r^{(d)}\overset{\pi_C}{\longleftarrow}V^{(d+1)}_{r+1}$. Then, over  an neighborhood of $x\in\Sing(\G_r)$, the transform of $\G_r$, say $\G_{r+1}$, is within the strong monomial case.
\end{proposition}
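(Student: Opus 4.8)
The plan is to pick a $p$-presentation $p\P(\beta_r,z,f_{p^e})$ which is simultaneously well-adapted to $\G_r$ at $\beta_r(x)$ and at the generic point $y=\xi_{\beta_r(C)}$ of $\beta_r(C)$, and compatible with the tight monomial algebra $\m_r W^s$; this is possible by Proposition \ref{simult:adap} (using that $\G_r$ is in the strong monomial case, so $C$ is permissible for $\m_rW^s$ by Remark \ref{krx1}, and $x$ is a smooth point of $\overline{y}$). Since $\tau_{\G_r,x}=1$, Proposition \ref{ppreOK} applies: the transformed presentation $p\P_1=p\P_1(\beta_{r+1},z_1,f_{p^e}^{(1)})$ is well-adapted to $\G_{r+1}$ at the generic point $\xi_H$ of the new exceptional hypersurface $H$, with $Sl(p\P_1)(\xi_H)=Sl(p\P)(y)-1$, and it is compatible with the tight monomial algebra $\m_{r+1}W^s$ of $\G_{r+1}$ (this last point using Lemma \ref{smctight}, which identifies the transform $\m_{r+1}'W^s$ of $\m_rW^s$ with $\m_{r+1}W^s$ precisely because we are in the strong monomial case). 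The task is then to check that $\G_{r+1}$ is in the strong monomial case at every closed point $x'\in\Sing(\G_{r+1})$ in a neighborhood of $\pi_C^{-1}(x)$, i.e. that $v\text{-}\ord^{(d-1)}(\G_{r+1})(x')=\ord(\m_{r+1}W^s)(x')$.

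**Key steps.** First I would dispose of the points $x'\in\Sing(\G_{r+1})$ not lying on $H$: near such a point $\pi_C$ is an isomorphism, the presentation $p\P_1$ restricts to $p\P$ (Remark \ref{rmk62same}), the tight monomial algebra is unchanged, and the strong monomial case at $x'$ follows from the hypothesis on $\G_r$ together with the fact (Remark \ref{rmk62same}) that well-adaptedness is preserved. Second, for $x'\in H\cap\Sing(\G_{r+1})$: by Remark \ref{lem:z1:sing} (valid since $\tau_{\G_r,x}=1$ and $p\P$ is well-adapted at $x$, forcing $\In_x(f_{p^e})=Z^{p^e}$) we have $x'\in\{z_1=0\}$, so $z_1$ is again a transversal section of order one at $\calo_{V^{(d)}_{r+1},x'}$ and $\beta_{r+1}(x')\in\overline{\xi_H}$. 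Third, I would invoke the characterization of the strong monomial case (Theorem \ref{thm:char:stron}): since $p\P_1$ is well-adapted to $\G_{r+1}$ at $\beta_{r+1}(x')$ and compatible with $\m_{r+1}W^s$, it suffices to verify either $(\R_{\G,\beta})_{r+1}=\m_{r+1}W^s$ locally, or that $\calo_{V^{(d-1)}_{r+1}}[a_{p^e}^{(1)}W^{p^e}]$ and $\calo_{V^{(d-1)}_{r+1}}[\m_{r+1}W^s]$ have the same integral closure. The natural route is to split according to the two cases (i)/(ii) of Theorem \ref{thm:char:stron} for $\G_r$ at $x$ and track what the monoidal transformation does: in case (i), $z$ is a maximal-contact section (Remark \ref{rmk:smc}(1)), $(\R_{\G,\beta})_r=\m_rW^s$, and both transform in the same way so $(\R_{\G,\beta})_{r+1}=\m_{r+1}W^s$; in case (ii), $\m_r W^{p^e}=a_{p^e}W^{p^e}$ with $\In_{\beta_r(x)}(a_{p^e})$ not a $p^e$-th power, and using the factorization $a_{p^e}^{(1)}=x_i^{\nu_y(a_{p^e})-p^e}a_{p^e}'$ from (\ref{eq:fact:an}) together with the computation $Sl(p\P_1)(\xi_H)=Sl(p\P)(y)-1=\ord(\m_rW^s)(y)-1$ (Corollary \ref{smcgen}) and the definition of $h_{r+1}$, one checks $a_{p^e}^{(1)}W^{p^e}$ generates the transformed tight monomial algebra up to integral closure.

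**Main obstacle.** I expect the delicate point to be case (ii) at points $x'\in H$ that are specializations of $\xi_H$ but \emph{not} the ``generic'' exceptional points: one must know that the equality $v\text{-}\ord^{(d-1)}(\G_{r+1})(x')=\ord(\m_{r+1}W^s)(x')$, which we get at $\xi_H$ from Proposition \ref{ppreOK}, propagates down to all closed specializations. This is exactly the content of Corollary \ref{smcgen} applied \emph{downward} — but here the strong monomial case is what we are trying to prove for $\G_{r+1}$, so one cannot cite it directly at closed points. The resolution is to run the argument in the opposite direction: establish the equality first at each closed point $x'\in H\cap\Sing(\G_{r+1})$ by a direct computation with the transformed presentation (the well-adaptedness at $\beta_{r+1}(x')$ is guaranteed by the case analysis (B.1), (B.2.a), (B.2.b) inside the proof of Proposition \ref{ppreOK}), and only afterward read off the statement at non-closed points from Corollary \ref{smcgen}. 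Keeping the logical order straight here — closed points via Proposition \ref{ppreOK}'s case analysis and Theorem \ref{thm:char:stron}, then non-closed points via Corollary \ref{smcgen} — is the crux; once it is arranged correctly the remaining verifications are the routine monomial bookkeeping of (\ref{eq:fact:an}) and Remark \ref{rmkintM}.
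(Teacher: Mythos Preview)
Your overall architecture is right --- simultaneous adaptation via Proposition \ref{simult:adap}, Proposition \ref{ppreOK} for the generic exceptional point, Lemma \ref{smctight} identifying the transformed tight monomial algebra, and then the characterization Theorem \ref{thm:char:stron}. But there is a genuine gap exactly where you flag the difficulty. Your assertion that ``the well-adaptedness at $\beta_{r+1}(x')$ is guaranteed by the case analysis (B.1), (B.2.a), (B.2.b) inside the proof of Proposition \ref{ppreOK}'' is not correct: that proposition and its internal cases establish well-adaptedness only at the \emph{generic} point $\xi_H$ of the new exceptional hypersurface, not at arbitrary closed $x'\in H\cap\Sing(\G_{r+1})$. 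And Theorem \ref{thm:char:stron} requires, as a hypothesis, a presentation well-adapted at the point in question --- without that, even knowing that $a_{p^e}^{(1)}W^{p^e}$ generates $\m_{r+1}W^s$ up to integral closure is not enough, since cleaning at $x'$ could alter $a_{p^e}^{(1)}$.

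The paper spends most of the proof establishing precisely this well-adaptedness at closed points $x'$, via a case analysis that is different from (and more delicate than) the one inside Proposition \ref{ppreOK}. In case (ii) of the characterization the strong monomial hypothesis gives $a_{p^e}=u\cdot x_1^{h_1}\cdots x_r^{h_r}$ with $u$ a unit; a preliminary reduction by codimension-$2$ blowups arranges $0<h_i<p^e$ for all $i$. One then splits according to whether $\nu_{\xi_{\beta_r(C)}}(a_{p^e})/p^e$ is integral and how many of the monomial components $\{x_i=0\}$ contain $C$. The hardest subcase --- all of them contain $C$ and the slope is integral --- needs a Taylor-expansion (relative differential operator) argument on the exceptional hypersurface to force every closed $x'\in\Sing(\G_{r+1})\cap H$ to lie on the strict transform of some $\{x_j=0\}$, so that $\In_{\beta_{r+1}(x')}(a_{p^e}^{(1)})$ still carries a factor with exponent $\not\equiv 0\pmod{p^e}$ and hence is not a $p^e$-th power. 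A further twist arises when the slope at $\xi_{\beta_r(C)}$ equals $\ord((\R_{\G,\beta})_r)(\xi_{\beta_r(C)})$: one must locate an auxiliary exceptional component supported in $(\R_{\G,\beta})_r$ but not in $\m_r$ and restrict to its strict transform instead. None of this is routine bookkeeping, and none of it is contained in Proposition \ref{ppreOK}; it is new work that exploits the explicit monomial shape of $a_{p^e}$ furnished by the strong monomial hypothesis.
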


\begin{proof}
Fix a $p$-presentation $p\P(\beta,z,f_{p^e})$ well-adapted to $\G_r$ both at $\x=\beta_r(x)$ and at $\xi_{\beta(C)}$, and compatible with $\m_r W^s$. Here $\xi_{\beta_r(C)}$ denotes the generic point of $\beta_r(C)$. Set $f_{p^e}(z)=z^{p^e}+a_1z^{p^e-1}+\dots+a_{p^e}$.

Proposition \ref{ppreOK} asserts that $p\P_1$ is well-adapted to $\G_{r+1}$ at $\xi_{H_{r+1}}$, the generic point of $H_{r+1}$ (i.e., the new exceptional hypersurface), and $p\P_1$ is compatible with the tight monomial algebra of $\G_{r+1}$, namely $\m_{r+1}W^s$.



We {\bf claim} that $p\P_1$ is well-adapted to $\G_{r+1}$ at $\x'=\beta_{r+1}(x')$ for any closed point $x'\in\Sing(\G_{r+1})$ mapping to $x$. In particular,  that $v-ord^{(d-1)}(\G_{r+1})(x')=\min\{\frac{\nu_{\x'}(a_{p^e}^{(1)})}{p^e},\ord((\R_{\G,\beta})_{r+1})(\x')\}$, where $a_{p^e}^{(1)}$ denotes the independent term of the strict transform $f_{p^e}^{(1)}(z_1)$.

As $\G_r$ is in the strong monomial case at $x$, then either $\m_r W^s=(\R_{\G,\beta})_r=\mathcal{N}W^s$, or the algebras spanned by $a_{p^e}W^{p^e}$ and $\m_r W^s$ have the same integral closure. Lemma \ref{smctight} asserts that the transform $\m_r W^s$ (the tight monomial algebra of $\G_r$) is $\m_{r+1}W^s$, the tight monomial algebra of $\G_{r+1}$. Hence, at $V_{r+1}^{(d)} $, conditions (i) or (ii) in Theorem \ref{thm:char:stron}  are preserved. Thus, if the claim is true, then $\G_{r+1}$ is also in the strong monomial case at $x'$.

We now address the proof of the previous claim: Fix a closed point $x'\in\Sing(\G_{r+1})\cap H_{r+1}$ mapping to $x$ (i.e., $\pi_C(x')=x)$. Recall that, under the hypothesis $\tau_{\G_r,x}=1$, Remark \ref{lem:z1:sing} asserts that $x'\in\{z_1=0\}$, where $z_1$ denotes the strict transform of $z$. We prove now that $p\P_1$ is well-adapted to $\G_{r+1}$ at $\x'$.

We assume that the tight monomial algebra $\m_r W^s$ is of the form
\begin{equation}\label{eq:monred}
I(H_1)^{h_1}\dots I(H_r)^{h_r}W^s\ \hbox{ with }0< h_i< s.
\end{equation}
In order to achieved this (namely, that all $h_i<s$), it suffices to consider a finite sequence of permissible transformations with centers of codimension 2 at
 $V_{r}^{(d)} $.

We divide the proof in the following cases:

{\bf(1)} Assume that $v-ord^{(d-1)}(\G_r)(x)=\ord((\R_{\G,\beta})_r)(\x)\ \big(\!=\ord(\m_r W^s)(\x)\big)$. Theorem \ref{thm:char:stron} ensures that $(\R_{\G,\beta})_r=\mathcal{N}_rW^s=\m_r W^s$ in a neighborhood of $x$ and, in particular, that $\ord((\R_{\G,\beta})_r)(\xi_C)=\ord(\m_r W^s)(\xi_C)$. Note that $\m_{r+1}W^s=(\R_{\G,\beta})_{r+1}$, and again Theorem \ref{thm:char:stron} says that $\G_{r+1}$ is in the strong monomial case (in particular $p\P_1$, is well-adapted to $\G_{r+1}$ at $\x'$).

{\bf(2)} Suppose now that $v-ord^{(d-1)}(\G_r)(x)=\frac{\nu_\x(a_{p^e})}{p^e}<\ord((\R_{\G,\beta})_r)(\x)$. In this case, Theorem \ref{thm:char:stron} says that the algebras spanned by $a_{p^e}W^{p^e}$ and $\m_r W^s$ have the same integral closure. In particular, we can take $s=p^e$ and assume that $a_{p^e}W^{p^e}=u\m_r W^{p^e}$, where $u$ is a unit (see Remark \ref{rmk:smc}). 

The equality $a_{p^e}W^{p^e}=u\m_r W^{p^e}$ implies that 
\begin{equation}\label{numero}
\frac{\nu_{\xi_{\beta_r(C)}}(a_{p^e})}{p^e}=\ord(\m_r W^s)(\xi_{\beta_r(C)})\ \big(\!\leq\ord((\R_{\G,\beta})_r)(\xi_{\beta_r(C)})\big).
\end{equation}
and, as $p\P$ is well-adapted at $\xi_{\beta_r(C)}$, then $v-ord^{(d-1)}(\G_r)(\xi_C)=\frac{\nu_{\xi_{\beta_r(C)}}(a_{p^e})}{p^e}$, and hence
 $h_{r+1}=\nu_{\xi_{\beta_r(C)}}(a_{p^e})-p^e$ is the exponent of $I(H_{r+1})$ in $\m_{r+1}W^{s}$ ($s=p^e$).

{\bf(2.A)} If $\frac{\nu_{\xi_C}(a_{p^e})}{p^e}\not \in\mathbb{Z}_{>0}$, then $h_{r+1}=\nu_{\xi_{H_{r+1}}}(a_{p^e}^{(1)})=\nu_{\xi_{\beta_r(C)}}(a_{p^e})-p^e(\not\equiv 0\mod p^e)<\ord((\R_{\G,\beta})_r)(\xi_{H_{r+1}})$. Notice here that $\In_{\x'}(a_{p^e}^{(1)})$ cannot be a $p^e$-th power since $h_{r+1}\not\equiv 0 \mod p^e$. Hence, $p\P_1$ is well-adapted to $\G_{r+1}$ at $\x'$ (see Definition \ref{defweladap}).

\vspace{0.2cm}

Let us introduce some notation useful for the proof of the remaining cases: Fix a regular system of parameters in the local regular ring $\calo_{V^{(d)},x}$, say $\{z,x_1,\dots,x_{d-1}\}$,  such that:
\begin{itemize}
\item[(i)] the tight monomial algebra is locally generated by a monomial in $x_1,\dots,x_r$ ($r\leq d-1$), say 
$x_1^{h_1}\dots x_r^{h_r}$, and

\item[(ii)]  the permissible center is 
$I(C)=\langle z, x_1,\dots,x_\ell, y_1,\dots,y_m\rangle,$ where $\ell\leq r$ and $y_j=x_{r+j}$ for $j=1,\dots,m$. 
\end{itemize}

As $a_{p^e}W^{p^e}=u\cdot \m_rW^s$ where $u$ is invertible, then 
$$a_{p^e}=ux_1^{h_1}\cdots x_\ell^{h_\ell}x_{\ell+1}^{h_{\ell+1}}\cdots x_r^{h_r}\ \ \ \ \ \hbox{ with }0<h_i<p^e.$$

{\bf(2.B)} Assume that $\frac{\nu_{\xi_{\beta_r(C)}}(a_{p^e})}{p^e} \in\mathbb{Z}_{>0}$ and that $\ell<r$. In this case, one can check that $\In_{\x'}(a_{p^e}^{(1)})$, which is also monomial, is not a $p^e$-th power. In fact, at each chart
$$a_{p^e}^{(1)}=ux_1^{h_1+\dots+h_{\ell}-p^e}\big(\frac{x_2}{x_1}\big)^{h_2}\dots\big(\frac{x_\ell}{x_1}\big)^{h_\ell}x_{\ell+1}^{h_{\ell+1}}\dots x_r^{h_r}\quad \hbox{in the }U_{x_1}\hbox{-chart, or}$$
$$a_{p^e}^{(1)}=uy_1^{h_1+\dots+h_{\ell}-p^e}\big(\frac{x_1}{y_1}\big)^{h_1}\dots\big(\frac{x_\ell}{y_1}\big)^{h_\ell}x_{\ell+1}^{h_{\ell+1}}\dots x_r^{h_r}\quad \hbox{in the }U_{y_1}\hbox{-chart}$$
and $0<h_{\ell+1}< p^e$ (i.e., $h_{\ell+1}\not\equiv 0\mod p^e$). This ensures that $\In_{\x'}(a_{p^e}^{(1)})$ is not a $p^e$-th power, and hence $p\P_1$ is well-adapted at $\x'$.

\vspace{0.15cm}

{\bf(2.C)} Assume that $\frac{\nu_{\xi_{\beta_r(C)}}(a_{p^e})}{p^e} \in\mathbb{Z}_{>0}$ and $\ell=r$. 

 Note here that $\ell=r\geq 2$, since $\m_r W^{p^e}$ is not a $p^e$-th power and $h_1+\cdots +h_r \equiv 0\mod p^e$. 
 
 We prove now that $p\P_1$ is well-adapted at $\x'$ by considering two cases:
 
{\bf(2.C.1)} Firstly suppose that $\frac{\nu_{\xi_{\beta_r(C)}}(a_{p^e})}{p^e}<\ord((\R_{\G,\beta})_r)(\xi_{\beta_r(C)})$. After a finite number of monoidal transformations over $V^{(d)}_r$ at centers of codimension 2, we can assume that $h_{r+1}=0$. Thus, the independent term, say $a_{p^e}^{(1)}$, is  
$$a_{p^e}^{(1)}=u\big(\frac{x_2}{x_1}\big)^{h_2}\dots\big(\frac{x_r}{x_1}\big)^{h_r}\quad \hbox{in the }U_{x_1}\hbox{-chart, or}$$
$$a_{p^e}^{(1)}=u\big(\frac{x_1}{y_1}\big)^{h_1}\dots\big(\frac{x_r}{y_1}\big)^{h_r}\quad \hbox{in the }U_{y_1}\hbox{-chart}.$$
Both cases are analogous, so it suffices to consider the problem at  the $U_{x_1}$-chart. The difference with the discussion in (2.B) appears when considering a closed exceptional point where $a_{p^e}^{(1)}$ is a unit.
We address now this case. Let 
$$f_{p^e}^{(1)}=z_1^{p^e}+a_1^{(1)}z_1^{p^e-1}+\dots+a_{p^e}^{(1)}$$ be the strict transform of $f_{p^e}$.
The assumption $\frac{\nu_{\xi_{\beta_r(C)}}(a_{p^e})}{p^e}<\ord((\R_{\G,\beta})_r)(\xi_{\beta_r(C)})$ ensures that $\ord((\R_{\G,\beta})_{r+1})(\xi_{H_{r+1}})>0$, and hence that $x_1$ divides $a_j^{(1)}$ for $j=1,\dots,p^e-1$ (see Theorem \ref{rmk:x:restr}).

 We claim that if $x'\in\Sing(\G_1)$, then $x'\in\{x_1=0\}\cap\{\frac{x_j}{x_1}=0\}$ for some $j\in\{2,\dots,r\}$. Let  
 $\overline{f}_{p^e}^{(1)}=z_1^{p^e}+\overline{a}_{p^e}^{(1)}$
 be the restriction pf $f_{p^e}^{(1)}$ to $x_1=0$, where $\overline{a}_{p^e}^{(1)}=\overline{u}\big(\frac{x_2}{x_1}\big)^{h_1}\dots\big(\frac{x_r}{x_1}\big)^{h_r}$, where we identify $\overline{a}_{p^e}^{(1)}$ with an element of $\calo_{\beta_r(C)}\big[\big(\frac{x_2}{x_1}\big),\dots,\big(\frac{x_r}{x_1}\big),\big(\frac{y_1}{x_1}\big),\dots,\big(\frac{y_m}{x_1}\big)\big]$. 
This is a polynomial ring in $r-1+m$ variables. Consider the Taylor expansion of $\overline{a}_{p^e}^{(1)}$ at this ring, say
$$Tay\big(\overline{a}_{p^e}^{(1)}\big)=\!\!\!\!\!\!\!\!\!\!\!\!\sum_{\qquad\alpha\in\mathbb{N}^{r-1+m}}\!\!\!\!\!\!\!\!\Delta^{\alpha}(\overline{a}_{p^e}^{(1)})\ T^\alpha$$ 
The operators $\Delta^{\alpha}$ in this expansion are differential operators in
$\calo_{\beta_r(C)}\big[\big(\frac{x_2}{x_1}\big),\dots,\big(\frac{x_r}{x_1}\big),\big(\frac{y_1}{x_1}\big),\dots,\big(\frac{y_m}{x_1}\big)\big],$
relative to the ring $\calo_{\beta_r(C)}$.


Note here that $\overline{u}\in\calo_{\beta_r(C)}$, so in particular, $\Delta^{\alpha}(\overline{a}_{p^e}^{(1)})=\overline{u}\Delta^{\alpha}\big(\big(\frac{x_2}{x_1}\big)^{h_2}\dots \big(\frac{x_r}{x_1}\big)^{h_r}\big).$

Since it is assumed that $h_j<p^e$, it follows that
$$\Delta^{\alpha_j}(\overline{a}_{p^e}^{(1)})=\overline{u}\big(\frac{x_2}{x_1}\big)^{h_2}\dots \big(\frac{x_{j-1}}{x_1}\big)^{h_{j-1}}\big(\frac{x_{j+1}}{x_1}\big)^{h_{j+1}}\dots \big(\frac{x_r}{x_1}\big)^{h_r},$$
for $\alpha_j=(0,\dots,h_j,\dots,0)\in\mathbb{N}^{r-1+m}$.  Therefore, if $\Delta^{\alpha_j}(\overline{a}_{p^e}^{(1)})(\x')=0$, then $\x'\in\big\{\frac{x_\ell}{x_1}=0\big\}$ for some $\ell$.

So, if $x'\in\Sing(\G_1)\cap H_{r+1}\cap U_{x_1}$, then $ \x'\in\cup_{2\leq j\leq r}\big\{\frac{x_i}{x_1}=0\Big\}$. In this case we can argue as in (2.B) to show that $\In_{\x'}(a_{p^e}^{(1)})$ is not a $p^e$-th power, and hence that $p\P_1$ is well-adapted at $\x'$.

\vspace{0.15cm}

{\bf(2.C.2)} According to (\ref{numero}), the only case left is $\frac{\nu_{\xi_{\beta_r(C)}}(a_{p^e})}{p^e}=\ord((\R_{\G,\beta})_r)(\xi_{\beta_r(C)})$ within the case $\ell=r$. The equality
 $\ord(\m_r W^s)(\xi_{\beta_r(C)})=\ord((\R_{\G,\beta})_r)(\xi_{\beta_r(C)})$, implies that $h_i=\alpha_i$ for $i=1,\dots,r$ (see (\ref{2monos})).

By the assumption in the case (2), $\ord(\m_r W^s)(\x)<\ord(\mathcal{N}_rW^s)(\x)$. Thus,  there must be an exceptional hypersurface, say $H$, so that $\x\in H$,  $H$ is not a component of the support of $\m_r$ (of $V(\m_r)$), and $H$ is a component of $V(\mathcal{N}_rW^s)$. That is, $H\not=H_j$ for $j=1,\dots,r$ and $\ord(\mathcal{N}_rW^s)(\xi_H)>0$.

Consider the monoidal transformation along $C$. We may assume that after a finite number of monoidal transformations at centers of codimension $2$, that the new  exceptional hypersurface, say $H_{r+1}$, is not a component of $V(a_{p^e}^{(1)})$. So $a_{p^e}^{(1)}$ is essentially monomial and admits expressions as those two in (2.C.1), both in $U_{x_i}$-charts or in $U_{y_j}$-charts. In addition, $H_{r+1}$ is not a component of $V(\mathcal{N}_{r+1}W^s)$, where $\mathcal{N}_{r+1}W^s$ is the transform of $\mathcal{N}_rW^s$. On the contrary, the strict transform of $H$ is a component of $V(\mathcal{N}_{r+1}W^s)$ and is not a component of $V(a_{p^e}^{(1)})$.

We argue now as in (2.C.1), considering restrictions to the strict transform of $H$, instead of  restrictions to $H_{r+1}$. The same arguments apply to show that $p\P_1$ is well-adapted at $\x'$.
\end{proof}

We may assume the existence of resolution for simple Rees algebras with $\tau\geq 2$. This reduction is possible by decreasing induction on the invariant $\tau$. The following Theorem shows how to increase the invariant $\tau$, under the assumption that $\G_r$ is in the strong monomial case.

\begin{theorem}\label{claim:gamma:res}
Let $\G_r$ be within the strong monomial case. Then, any combinatorial resolution of $\m_r W^s$ can be lifted to a sequence of  transformations of $\G_r$, say 
\begin{equation}\label{eq:seq:smc}
\xymatrix@R=0pc@C=0pc{
\G_r & & & & & \G_{r+1} &  & & & &  &  & & & &   \G_N\\
V_r^{(d)}  &  & & & &   V^{(d)}_{r+1}\ar[lllll]_{\pi_{r+1}}   & & & & & \dots \ar[lllll]_{\pi_{r+2}} &  & & & &   V^{(d)}_{N}\ar[lllll]_{\pi_N}
}
\end{equation}
and if $x\in\Sing(\G_r)$ is a closed point so that $\tau_{\G_r,x}=1$, then 
$\pi^{-1}(x)\cap\Sing(\G_N)=\emptyset.$
Hence, $\tau_{\G_N,x'}\geq2$ for any $x'\in\Sing(\G_N)$.

\end{theorem}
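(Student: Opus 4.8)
The strategy is to lift a combinatorial resolution of the tight monomial algebra $\m_r W^s$ step by step, and to show that along any exceptional fiber over a point $x$ with $\tau_{\G_r,x}=1$ the singular locus of $\G$ is eventually emptied. The two ingredients that make this work are already in place: first, Proposition~\ref{prop:stab:smc}, which says that each permissible monoidal transformation with center through a point of $\tau=1$ keeps us inside the strong monomial case (so Corollary~\ref{smcgen} keeps applying, and in particular $v\text{-}ord^{(d-1)}(\G_i)$ agrees with $\ord(\m_i W^s)$ at every point); and second, Lemma~\ref{smctight}, which guarantees that the transform of the tight monomial algebra $\m_r W^s$ is again the tight monomial algebra $\m_{r+1}W^s$. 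Because of these two facts, a combinatorial resolution of $\m_r W^s$ -- a sequence of monoidal transformations with centers among the intersections of exceptional hypersurfaces, chosen purely from the exponents $h_i$ -- stays permissible for $\G_r$ at each stage (centers inside $\Sing(\m_i W^s)$ contain the relevant part of $\Sing(\G_i)$ by Corollary~\ref{smcgen}, and $\tau=1$ is preserved so $\beta_i(C)$ remains a legitimate center via Remark~\ref{krx1}), and lifts to a sequence~\eqref{eq:seq:smc} of transformations of $\G_r$.

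\textbf{Key steps.} First I would set up the sequence: fix $x\in\Sing(\G_r)$ with $\tau_{\G_r,x}=1$, run the combinatorial resolution of $\m_r W^s$, and use Proposition~\ref{prop:stab:smc} together with Lemma~\ref{smctight} to show inductively that at stage $i$ the algebra $\G_i$ is in the strong monomial case along the fiber over $x$, that the tight monomial algebra of $\G_i$ is the transform of $\m_r W^s$, and that $\tau_{\G_i,x_i}=1$ at any $x_i\in\Sing(\G_i)$ mapping to $x$. Second, I would invoke Corollary~\ref{smcgen}: for any closed point $x_i\in\Sing(\G_i)$ over $x$ we have $v\text{-}ord^{(d-1)}(\G_i)(x_i)=\ord(\m_i W^s)(x_i)$. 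Third, one uses the fact that a combinatorial resolution of $\m_r W^s$ terminates with $\Sing(\m_N W^s)=\emptyset$; hence $\ord(\m_N W^s)(x')<1$ for every $x'$ over $x$. Fourth -- and this is the crux -- I would argue that $\ord(\m_N W^s)(x')<1$ forces $x'\notin\Sing(\G_N)$: since $v\text{-}ord^{(d-1)}(\G_N)(x')=\ord(\m_N W^s)(x')<1$ would have to hold if $x'\in\Sing(\G_N)$, but $v\text{-}ord^{(d-1)}$ is $\geq 1$ on $\Sing(\G_N)$ (Remark~\ref{rmk56}, or Proposition~\ref{prop36}(i)), this is a contradiction; so $\pi^{-1}(x)\cap\Sing(\G_N)=\emptyset$. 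The final sentence $\tau_{\G_N,x'}\geq 2$ then follows because the points of $\Sing(\G_N)$ over points where $\tau_{\G_r}=1$ have been removed, and away from such points $\tau\geq 2$ was assumed.

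\textbf{Main obstacle.} The delicate point is verifying that the combinatorial centers remain \emph{permissible for $\G_i$} throughout, not merely for $\m_i W^s$. A priori $\Sing(\G_i)$ can be strictly smaller than the locus $\{\ord(\m_i W^s)\geq 1\}$ where the combinatorial algorithm chooses its centers, and the combinatorial center $C$ need only satisfy $C\subset\Sing(\m_i W^s)$. One must check that $C\cap\Sing(\G_i)$ is what actually gets modified and that the $\tau=1$ hypothesis propagates along $C$, so that Proposition~\ref{prop:stab:smc} is applicable at \emph{every} closed point of $C\cap\Sing(\G_i)$ -- not just at the generic point. Here I would use that, in the strong monomial case, $v\text{-}ord^{(d-1)}(\G_i)=\ord(\m_i W^s)$ identically on $\Sing(\G_i)$ (Corollary~\ref{smcgen}), so the lifted center meets $\Sing(\G_i)$ exactly in the locus where $\ord(\m_i W^s)\geq 1$ intersected with $\Sing(\G_i)$, and the $\tau=1$ condition is an open condition on $\Sing(\G_i)$ that is stable under these transformations by Proposition~\ref{prop:stab:smc}; one may have to shrink to a neighborhood of $x$ (as the statement permits) to ensure $\tau=1$ holds along the whole relevant part of the center. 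Packaging this bookkeeping cleanly -- the interplay between ``combinatorial center for the monomial algebra'' and ``permissible center for $\G_i$'' under the strong-monomial identification -- is the part that needs care; the numerical contradiction at the end is then immediate.
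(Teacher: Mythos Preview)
Your proposal is correct and follows essentially the same approach as the paper's proof: invoke Proposition~\ref{prop:stab:smc} to preserve the strong monomial case over the fiber of a $\tau=1$ point, use the identification $v\text{-}ord^{(d-1)}(\G_i)=\ord(\m_i W^s)$ on $\Sing(\G_i)$ throughout the sequence, and conclude by the numerical contradiction $\ord(\m_N W^s)(x')<1$ versus $v\text{-}ord^{(d-1)}\geq 1$ on the singular locus. The paper's argument is considerably more terse---it simply asserts that the combinatorial resolution in dimension $d-1$ lifts to a permissible sequence in dimension $d$, then applies Proposition~\ref{prop:stab:smc} and the strong-monomial equality directly---whereas you explicitly track Lemma~\ref{smctight} and Corollary~\ref{smcgen} and spell out the contradiction via Remark~\ref{rmk56}/Proposition~\ref{prop36}(i). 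Your ``main obstacle'' (that combinatorial centers for $\m_i W^s$ need not a priori sit inside $\Sing(\G_i)$) is real bookkeeping that the paper glosses over; your proposed resolution---work in a neighborhood of $x$, use that $\tau=1$ is open on $\Sing(\G_i)$, and appeal to the strong-monomial identification to match the loci---is the right way to close it.
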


\begin{proof}
Recall that $\m_r W^s(\subset \calo_{V_r^{(d)}}[W])$ is the pull-back of a monomial algebra, say $\m_r W^s$ again $(\subset\calo_{V_r^{(d-1)}}[W])$. What we mean here is that a combinatorial resolution of $\m_r W^s$ in dimension $d-1$ can be lifted to a permissible sequence in dimension $d$.

Fix a closed point $x\in\Sing(\G_r)$ so that $\tau_{\G_r,x}=1$. Proposition \ref{prop:stab:smc} ensures that after a permissible sequence of transformation as (\ref{eq:seq:smc}), the transform $\G_N$ is in the strong monomial case.
In particular, $v-ord^{(d-1)}(\G_N)(x')=\ord(\m_{N} W^s)(x')$ for any closed point $x'\in\Sing(\G_N)$ mapping to $x$. Moreover, by assumption $\ord(\m_N W^s)(x')<1$. That is, 
$\pi^{-1}(x)\cap\Sing(\G_N)=\emptyset.$
\end{proof}

\end{part}

\appendix
\begin{part}{Proofs of Theorems}\label{partf}
\section{Proof of Main Theorem 1.}\label{sec777}

\begin{parrafo}{\bf  Hironaka's weak equivalence}. There are two natural operations on Rees algebras, both will be crucial for a precise formulation of Hironaka's notion of invariance.  Fix a smooth scheme $V^{(d)}$ and a set, say $E=\{H_1,\dots,H_r\}$, of smooth hypersurfaces so that $\cup H_i$ has only normal crossings. Let $\G=\bigoplus I_nW^n$ be a Rees algebra in $V^{(d)}$. Let now
\begin{equation}\label{eqPB}
V^{(d)}\overset{\pi}{\longleftarrow}U
\end{equation}
be defined either by:
\begin{enumerate}
\item[(A)] An open set $U$ of $V^{(d)}$ in Zariski or \'etale topology.

\item[(B)] The projection of $U=V^{(d)}\times \mathbb{A}^n_k$ on the first coordinate. Here, $\mathbb{A}^n_k$ denotes the $n$-dimensional affine scheme (with $n\in\mathbb{Z}_{\geq1}$).
\end{enumerate}

In both cases, there is a naturally defined pull-back of the Rees algebra $\G$ and of the set $E$. This defines a Rees algebra $\G_U$ and a set $E_U$. Here $E_U$ consists of the pull-backs of the hypersurfaces in $E$. The Rees algebra $\G_U$ is defined as:
\begin{enumerate}
\item[(A)] The restriction to $U$ in case $(A)$, i.e., $\G_U=\bigoplus (I_n)_UW^n$.

\item[(B)] The total transforms of each ideal $I_n$, say $I_n^{*}$, in case $(B)$, i.e., $\G_U=\bigoplus I_n^{*}$.
\end{enumerate}
The pull-back defined by $V^{(d)}\overset{\pi}{\longleftarrow}U$ is denoted by:
$$\xymatrix@R=0cm{
\G & \G_U\\
(V^{(d)},E) & (U,E_U)\ar[l]_{\ \ \ \pi}
}$$
Observe here that $\Sing(\G_U)=\pi^{-1}(\Sing(\G)).$

\begin{definition}
A \emph{local sequence} of a Rees algebra $\G$ and a set $E$ is a sequence
\begin{equation}\label{localseq}
\xymatrix@R=0cm{
\G & \G_1 & & \G_r\\
(V^{(d)},E) & (\widetilde{V}^{(d)}_1,E_1)\ar[l]_{\ \ \pi_1} &\dots \ar[l]_{\ \ \pi_2} & (\widetilde{V}^{(d)}_r\ar[l]_{\ \pi_r},E_r)
}\end{equation}
where each $\xymatrix{\widetilde{V}_i^{(d)} & \widetilde{V}^{(d)}_{i+1}\ar[l]_{\ \ \ \pi_{i+1}}}$ is a pull-back or  a monoidal transformation at a center $C_i\subset\Sing(\G_i)$ with normal crossing with the exceptional hypersurfaces in $E_i$ for $i=0,\dots,r-1$. 

\end{definition}

\begin{definition}
Fix two Rees algebras $\G$ and $\G'$ and a set of hypersurfaces with normal crossings $E$  in the smooth scheme $V^{(d)}$. We say that $\G$ and $\G'$ are \emph{weakly equivalent} if:
\begin{enumerate}
\item[i)] $\Sing(\G)=\Sing(\G')$.
\item[ii)] Any local sequence of $\G$, say (\ref{localseq}), define a local sequence of $\G'$ (and vice versa), and $\Sing(\G_i)=\Sing(\G_i')$ for $i=0,\dots,r$.
\end{enumerate}
\end{definition}

\begin{remark}
Note that if $\G$ and $\G'$ are weakly equivalent, then also their transforms $\G_i$ and $\G_i'$ are weakly equivalent. So the weak equivalence is preserved after any local sequence.
Two algebras with the same integral closure are weakly equivalent.
\end{remark}
\end{parrafo}

\begin{parrafo}{\bf On Main Theorem 1}.

\begin{proposition}\label{propHG}
Fix a Rees algebra $\G$ and a presentation $\P=\P(\beta,z,f_n(z))$. Let $H$ be a smooth irreducible hypersurface in $V^{(d-1)}$. Denote by $y$ the generic point of $H$ and assume that $\P$ is in normal form at $y$.
Then, $H$ is a component of $\beta(\Sing(\G))$ if and only if $Sl_y(p\P)\geq1$.
\end{proposition}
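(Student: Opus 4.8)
The plan is to relate the statement $H \subset \beta(\Sing(\G))$ to the value of the slope via the local presentation of $\G$ and the equivalence (a)$\Leftrightarrow$(d) from Remark \ref{rkcrit}, passing to the generic point $y$ of $H$. First I would recall that, since $\P$ is in normal form at $y$, the algebra $\G$ has the same integral closure as $\calo_{V^{(d)}}[f_n(z)W^n, \Delta^{(\alpha)}(f_n(z))W^{n-\alpha}]_{1\le\alpha\le n-1}\odot \R_{\G,\beta}$, so $\Sing(\G)$ is governed by the pair $(f_n(z), \R_{\G,\beta})$. In particular $\Sing(\G)\subset V(\langle f_n(z)\rangle)$, and since $\beta$ restricted to $\Sing(\G)$ is a set-theoretical bijection onto its image (property P0 in \ref{csc}, or the Zariski-type statement used in the proof of Proposition \ref{prop36}), there is at most one point of $\Sing(\G)$ over any point of $V^{(d-1)}$. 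I would work at the localization $\calo_{V^{(d-1)},y}$, a DVR since $H$ is a smooth irreducible hypersurface, so $\nu_y$ is the associated valuation.

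For the forward direction, suppose $H$ is a component of $\beta(\Sing(\G))$. Then the generic point of $H$ lies in $\beta(\Sing(\G))$; let $q\in\Sing(\G)$ be the unique point with $\beta(q)=y$. Then $\ord(\R_{\G,\beta})(y)>0$ by property P1 (since $\beta(\Sing(\G))\subset\Sing(\R_{\G,\beta})$, hence $\nu_y(\R_{\G,\beta})\ge 1$... in fact $>0$ suffices here), and the fibre condition forces $\overline{f_n(z)}=Z^n$ in $k(y)[Z]$ by the argument giving (c)$\Rightarrow$(d) in Remark \ref{rkcrit}; this means $\nu_y(a_i)>0$ for all $i$. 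Moreover, because $q\in\Sing(\G)$ and $z$ is a transversal section of order one at $q$ (this holds for presentations in normal form at $y$, cf.\ Remark \ref{rmk45}(1) and Proposition \ref{prop36}(i)), we actually get $a_i\in m_y^{i}$, i.e.\ $\frac{\nu_y(a_i)}{i}\ge 1$; combined with $\ord(\R_{\G,\beta})(y)\ge 1$ this yields $Sl_y(p\P)=\min\{\min_i \nu_y(a_i)/i,\ \ord(\R_{\G,\beta})(y)\}\ge 1$. For the converse, suppose $Sl_y(p\P)\ge 1$. Then $\nu_y(a_i)\ge i$ for every $i$ and $\ord(\R_{\G,\beta})(y)\ge 1$. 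Choosing a regular system of parameters $\{y_1,\dots,y_s\}$ at $y$ and extending by $z$ to a regular system at the unique point $x$ of $V(\langle f_n\rangle)$ over $y$ (Remark \ref{rkcrit}), the inequalities $a_i\in m_y^i$ give $f_n(z)\in m_x^n$, so $x\in V(\langle f_n\rangle)$ has multiplicity $n$; together with $\ord(\R_{\G,\beta})(y)\ge 1$ and the local presentation (\ref{eqlocalpresen}) this forces $x\in\Sing(\G)$, by the criterion in Proposition \ref{prop36}(i). Hence $y=\beta(x)\in\beta(\Sing(\G))$, and since $y$ is the generic point of the irreducible $H$ and $\beta(\Sing(\G))$ is closed in the relevant neighbourhood (being the homeomorphic image of the closed set $\Sing(\G)$ under the finite map $\overline{\beta}$, cf.\ P0), $H\subset\beta(\Sing(\G))$.

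The main obstacle I anticipate is bookkeeping the precise meaning of ``in normal form at $y$'' (presumably a generic-point version of ``well-adapted'', cf.\ Definition \ref{defweladap} and Remark \ref{rmkzin}): one must ensure that at the generic point the presentation genuinely reflects $\Sing(\G)$, in particular that no spurious cancellation makes $\Sing(\G)$ smaller than the locus cut out by $\{a_i\in m^i_y\}\cap\Sing(\R_{\G,\beta})$, and that the chosen section $z$ has order one along the relevant component so that the translation between $\nu_y(a_i)\ge i$ and $f_n(z)\in m_x^n$ is exact. This is handled by invoking Proposition \ref{prop36} and the equivalences in Remark \ref{rkcrit} at the (non-closed) point $y$, which is legitimate since those statements are proved for arbitrary $y\in V^{(d-1)}$, not only closed points; the only care needed is that $\Sing(\G)$ being locally closed, its image under the finite map $\overline{\beta}$ is locally closed, so ``$y\in\beta(\Sing(\G))$'' for the generic point $y$ of $H$ is equivalent to ``$H\subset\overline{\beta(\Sing(\G))}$ locally'', which is what the statement means.
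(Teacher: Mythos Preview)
Your proposal is correct and follows essentially the same approach as the paper: the paper's entire proof is the single line ``See Proposition~\ref{prop36}~i)'', and your argument is precisely an unpacking of that reference together with Remark~\ref{rkcrit}, so the content is identical. Your concern about the undefined phrase ``in normal form at $y$'' is well founded (it is indeed a loose synonym for ``well-adapted at $y$'' in the paper's usage), but since Proposition~\ref{prop36}~i) already requires only $Sl(p\P)(y)>0$ for the equivalence, this terminological looseness does not affect the argument.
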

\begin{proof}
See Proposition \ref{prop36} i).
\end{proof}

\begin{theorem}{\rm ({\bf Main Theorem 1})}.
Fix a Rees algebra $\G$. Consider a point $x\in\Sing(\G)$ and a $p$-presentation, say $p\P$, well-adapted at $\beta(x)$. The value $Sl(p\P)(\beta(x))$ is completely determined by the weak equivalence class of $\G$.
\end{theorem}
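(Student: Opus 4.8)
The goal is to show that $Sl(p\P)(\beta(x))$ depends only on the weak equivalence class of $\G$, not on the chosen transversal projection $\beta$, the transversal section $z$, or the monic polynomial $f_{p^e}$. The strategy is to pass through the elimination algebra and a characterization of its order $\ord(\R_{\G,\beta})$ in purely intrinsic terms. The key observation is that, for a well-adapted $p$-presentation, Corollary \ref{def_vord} gives
\[
Sl(p\P)(\beta(x))=\min\Big\{\frac{\nu_{\beta(x)}(a_{p^e})}{p^e},\ \ord(\R_{\G,\beta})(\beta(x))\Big\},
\]
so it suffices to prove that each of the two quantities on the right is an invariant of the weak equivalence class. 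For the second one, property P2) in \ref{csc} already asserts equality $\ord(\R_{\G,\beta})(\beta(x))=\ord(\R_{\G,\beta'})(\beta'(x))$ for any two transversal projections; but one still must upgrade this from ``independent of projection'' to ``determined by the weak equivalence class of $\G$,'' and this is where Proposition \ref{propHG} enters.

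First I would reduce to computing $\ord(\R_{\G,\beta})(\beta(x))$ via blow-ups, following the classical trick: for a non-negative rational $q$, the inequality $\ord(\R_{\G,\beta})(\beta(x))\geq q$ can be detected by a finite sequence of monoidal transformations and pull-backs in $V^{(d-1)}$ (test blow-ups at points/centers over $\beta(x)$, reading off whether the transformed elimination algebra still has a singular point over the exceptional divisor). By the compatibility of elimination algebras with monoidal transformations (diagram (\ref{cuadrado}), property (ii), i.e.\ Theorem 9.1 of \cite{BV3}) together with P1), any such local sequence downstairs in $V^{(d-1)}$ is the image of a local sequence of $\G$ upstairs in $V^{(d)}$, and conversely; moreover $\Sing((\R_{\G,\beta})_i)$ is controlled by $\Sing(\G_i)$ via Proposition \ref{propHG} (the exceptional hypersurface $H$ lies in the image of the singular locus iff the slope is $\geq 1$). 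Hence the numerical datum ``$\ord(\R_{\G,\beta})(\beta(x))\geq q$'' is phrased entirely in terms of local sequences of $\G$ and their singular loci — which is exactly the data preserved by weak equivalence. This shows $\ord(\R_{\G,\beta})(\beta(x))$ is a weak-equivalence invariant, and in particular independent of $\beta$, recovering and strengthening P2).

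Next I would handle the term $\tfrac{\nu_{\beta(x)}(a_{p^e})}{p^e}$. Since $p\P$ is well-adapted, the cases A), B1), B2) of \ref{rpcasos} are the only possibilities, and in case A) this term is $\geq \ord(\R_{\G,\beta})(\beta(x))$ so the slope equals the already-proven invariant. In cases B1), B2) one has $Sl(p\P)(\beta(x))=\tfrac{\nu_{\beta(x)}(a_{p^e})}{p^e}<\ord(\R_{\G,\beta})(\beta(x))$, and the point is to express $\tfrac{\nu_{\beta(x)}(a_{p^e})}{p^e}$ itself through a test sequence of monoidal transformations. Here I would use Proposition \ref{ppreOK}: blowing up a permissible center $C$ through $x$ with $\tau_{\G,x}=1$ drops the slope by exactly $1$, i.e.\ $Sl(p\P_1)(\xi_H)=Sl(p\P)(y)-1$, and the transform remains well-adapted. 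Iterating, one detects the exact value $\tfrac{\nu_{\beta(x)}(a_{p^e})}{p^e}$ by counting how many blow-ups are needed before the singular locus over $x$ disappears (using Proposition \ref{prop36} i), which says $Sl\geq 1 \iff x\in\Sing$). Since permissibility of centers, the blow-up diagram, and the singular loci of all transforms are again weak-equivalence data, this value is a weak-equivalence invariant too. Finally, the case $\tau_{\G,x}\geq 2$ is reduced to $\tau=1$ by taking a generic transversal hyperplane section (or by the standard $\tau$-induction), which does not change the slope; and the minimum of two weak-equivalence invariants is one.

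\textbf{Main obstacle.} The crux is the B1)/B2) analysis: while P2) handles $\ord(\R_{\G,\beta})$, there is no a priori ``coefficient algebra'' argument available for $a_{p^e}$ as in characteristic zero, so the entire burden of the proof falls on showing that the residual order $\tfrac{\nu_{\beta(x)}(a_{p^e})}{p^e}$ in the non-integral or non-$p^e$-power cases is genuinely read off from test monoidal sequences in a way insensitive to the defining data of the $p$-presentation. This requires (i) that well-adaptedness can be achieved compatibly with the test blow-ups (Propositions \ref{adap:mon}, \ref{simult:adap}, \ref{ppreOK}), and (ii) a careful check that the slope-drop formula of Proposition \ref{ppreOK} together with the finiteness of the cleaning process (Remark \ref{rk211}) pins down the value exactly, rather than merely bounding it. Establishing that the same sequence of test transformations, applied to two different well-adapted $p$-presentations (possibly over different projections $\beta,\beta'$), yields the same count — by matching their singular loci through the weak equivalence — is the delicate point that ties everything together.
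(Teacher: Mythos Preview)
Your overall instinct to read off the slope from a test sequence of blow-ups is right, and Proposition \ref{propHG} is indeed the tool that links ``$Sl\geq 1$'' to a statement about $\Sing(\G)$ that weak equivalence can see. But the specific plan has two genuine gaps.

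First, you try to show separately that $\ord(\R_{\G,\beta})(\beta(x))$ and $\tfrac{\nu_{\beta(x)}(a_{p^e})}{p^e}$ are each weak-equivalence invariants. This cannot work as stated: the only datum weak equivalence gives you is the singular locus of each transform, and by Proposition \ref{prop36} (i) and Proposition \ref{propHG} that datum encodes precisely the condition $Sl(p\P)\geq 1$, i.e.\ the \emph{minimum} of the two terms. There is no weak-equivalence mechanism to isolate $\ord(\R_{\G,\beta})$ by itself; P2) tells you it is independent of the projection, but that is a strictly weaker statement and your sketch does not supply the missing upgrade.

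Second, and more fundamentally, your ``count how many permissible blow-ups until the singular locus disappears'' only produces an integer, whereas $Sl(p\P)(\beta(x))$ is a priori a rational number with denominator dividing $p^e\cdot s$. Proposition \ref{ppreOK} drops the slope by exactly $1$ at the generic point of the new exceptional divisor, so after $k$ steps you learn whether $q-k\geq 1$; this recovers $\lfloor q\rfloor$, not $q$. Nothing in the cases B1)/B2) is captured by an integer count.

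The paper's proof fixes both issues with a single device that you did not use: the pull-back $V^{(d)}\times\mathbb{A}^1\to V^{(d)}$ (operation (B) in the definition of local sequence). One first multiplies by an affine line with coordinate $t$, then blows up $N$ times at the successive origins $q_0,\dots,q_{N-1}$ along the $t$-axis; this has the effect of replacing the exponents by $N(r_j-j)$ and $N(\alpha-s)$. One then performs as many codimension-$2$ blow-ups at $\langle z_{N+i},t\rangle$ as remain permissible; the maximal number is $\ell_N=\lceil N(q-1)-1\rceil$, and permissibility at each step is a weak-equivalence datum via Proposition \ref{propHG}. Finally $\lim_{N\to\infty}\ell_N/N=q-1$ recovers the rational slope directly, with no need to separate the two terms of the minimum. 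The auxiliary $\mathbb{A}^1$ is precisely what lets a sequence of integer counts approximate a rational number.
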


\begin{proof}
Here we sketch the proof of this Theorem. Further details can be found in \cite{BeV2}. Fix $p\P=p\P(\beta,z,f_{p^e}(z))$ well-adapted to $\G$ at $\x=\beta(x)$. Fix $f_{p^e}(z)=z^{p^e}+a_1z^{p^e-1}+\dots+a_{p^e}$ and set $r_j=\nu_\x(a_j)$ for $j=1,\dots,p^e$ and $\ord(\R_{\G,\beta})(\x)=\frac{\alpha}{s}$. Set $q=Sl(p\P)(\x)$, Theorem \ref{rmk:x:restr} says that $Sl(p\P)(\x)=\min\{\frac{\nu_\x(a_{p^e})}{p^e})(\x),\ord^{(d-1)}(\R_{\G,\beta})(\x)\}$.

Recall that $z$ is an element of order $1$ in $\calo_{V^{(d)},x}$.

Consider $V^{(d)}\times \mathbb{A}^1$, the product of $V^{(d)}$ with the affine line. Locally, in a neighborhood of $(x,0)\in V^{(d)}\times\mathbb{A}^1$, we identify $f_{p^e}(z)$ with its pull-back. Consider, in addition,  the natural projection $\widetilde{\beta}_0=\beta\times id: V^{(d)}\times\mathbb{A}^1\longrightarrow V^{(d-1)}\times\mathbb{A}^1$, mapping $(x,0)$ to $(\x,0)$. Finally, identify $\R_{\G,\beta}$ with its pull-back in $V^{(d-1)}\times \mathbb{A}^1$. This defines a $p$-presentation of the pull-back of $\G$ at $V^{(d)}\times \mathbb{A}^1$, say again $p\P$. Note that
$$Sl(p\P)((\x,0))=Sl(p\P)(\x)$$
and that $In_{(\x,0)}(a_{p^e})$ can be naturally identified with $In_\x(a_{p^e})$.

Fix coordinates $\{z,x_1,\dots,x_{a},t\}$ locally at $(x,0)$, where $\{z,x_1,\dots,x_{a}\}$ is a regular system of parameters at $\calo_{V^{(d)},x}$, and $\{x_1,\dots,x_{a}\}$ is a regular system of parameters at $\calo_{V^{(d-1)},\x}$. Consider the monoidal transformation with center $q_0=(x,0)$ and let $q_1$ be the intersection of the new exceptional hypersurface, say $H_1$, and the strict transform of  $x\times\mathbb{A}^1$.
This monoidal transformation at $q_0$ induces a monoidal transformation, say $V^{(d-1)}\times \mathbb{A}^1\longleftarrow V^{(d)}_1$, at $(\beta(x),0)=\widetilde{\beta}_0(q_0)$. Moreover, one can define a smooth morphism $\widetilde{\beta}_1:V^{(d+1)}_1\longrightarrow V^{(d)}_1$. The exceptional hypersurface $H_1\subset V^{(d+1)}_1$ is the pull-back of the exceptional hypersurface in $V^{(d)}_1$. To simplify notation, we denote both by $H_1$.

The point $q_1$ is the origin of the $U_t$-chart, ($U_t=\Spec(\calo_{V^{(d)},x}[\frac{z}{t},\dots,\frac{x_{a}}{t},t])$). The transform of $p\P$, say $p\P_1=p\P_1(\widetilde{\beta}_1,z_1,f_{p^e}^{(1)})$, is defined by
$$f_{p^e}^{(1)}(z_1)=z_1^{p^e}+t^{r_1-1}a'_1z_1^{p^e-1}+\dots+t^{r_{p^e}-p^e}a'_{p^e},\ \hbox{ and }\ (\R_{\G,\beta})_1$$ 
where $a_j'$ are not divisible by $t$, and $\ord((\R_{\G,\beta})_1)(\xi_{H_1})=\frac{\alpha-s}{s},$ where $\xi_{H_1}$ is the generic point of $H_1\subset V^{(d)}_1$. 

This process can be iterated $N$-times, defining a sequence of monoidal transformations at $q_0, q_1,\dots$, $q_{N-1}$, where each $q_j$ is the intersection of the new exceptional component, say $H_j\ (\subset V^{(d+1)}_j)$, with the strict transform of $x\times\A^1$.
The transform of $p\P$ at the final $U_t$-chart, say $p\P_N=p\P_N(\widetilde{\beta}_N,z_N,f_{p^e}^{(N)})$, is defined by
$$f_{p^e}^{(N)}(z_N)=z_N^{p^e}+t^{N(r_1-1)}a'_1z_N^{p^e-1}+\dots+t^{N(r_{p^e}-p^e)}a'_{p^e},\ \hbox{ and }\ (\R_{\G,\beta})_N,$$ 
with $\ord((\R_{\G,\beta})_N)(\xi_{H_N})=\frac{N(\alpha-s)}{s}$. Here $\xi_{H_N}$ is the generic point of $H_N\subset V^{(d)}_N$.

Fix $N>>0$, it may occur that $\Sing(\G_N)\cap H_{N}$ has codimension $2$ in the $d+1$-dimensional ambient space (i.e., $H_{N}\subset  V^{(d)}_N$ is a component of $\widetilde{\beta}_N(\Sing(\G_N))$). 
It can be proven that this occurs if and only if $(r_j-j)>0$ ($j=1,\dots p^e$) and $(\alpha-s)>0$.
In that case, $\Sing(\G_N)\cap H_{N}$ is defined by $\id{z_N,t}$. In particular, $\Sing(\G_N)\cap H_{N}$ is smooth when it has codimension $2$ in $V^{(d+1)}_N$. If this is the case, we look for further transformations defined with centers of codimension $2$ as we explain bellow.

Firstly consider the monoidal transformation of $V^{(d+1)}_N$ with center $\id{z_N,t}$. Set $z_{N+1}=\frac{z_N}{t}$. At the $U_t$-chart, the transform of $p\P_N$, say $p\P_{N+1}$, is defined by
$$f^{({N+1})}_{p^e}(z_{N+1})=z_{N+1}^{p^e}+t^{N(r_1-1)-1}a'_1z^{p^e-1}+\dots+t^{N(r_p^e-p^e)-p^e}a'_{p^e},\ \hbox{ and }\ (\R_{\G,\beta})_{N+1},$$
with $\ord((\R_{\G,\beta})_{N+1})(\xi_{H_{N+1}})=\frac{N(\alpha-s)-s}{s}$.

Now $\Sing(\G_{N+1})\cap H_{N+1}$ has codimension $2$ in $V^{(d+1)}_{N+1}$ if and only if it is described by $\id{z_{N+1},t}$, which is a smooth center.


Consider now, if possible, $\ell$ monoidal transformations at centers of codimension $2$ of the form $\id{z_{N+i},t}$. It gives rise to a sequence
\begin{equation}\label{seqweq}
\xymatrix@C=3pc@R=0pc{
\G_N & \G_{N+1} & & \G_{N+\ell}\\
V^{(d+1)}_{N} & V^{(d+1)}_{N+1}\ar[l]_{\pi_N} &\dots\ar[l]_{\ \ \  \ \pi_{N+1}} & V^{(d+1)}_{N+\ell}\ar[l]_{\pi_{N+\ell-1}} 
}\end{equation}
Geometrically, this sequence can be interpreted as follows: Set $H_{N+i-1}$ the exceptional hypersurface introduced by $\pi_{N+i-1}$. The center of the $N+i$-th monoidal transformation, say $\pi_{N+i}$, is defined by $H_{N+i}\cap\Sing(\G_{N+i})$, which is assumed to be of codimension $2$. The sequence (\ref{seqweq}) induces a sequence
$$\xymatrix@C=1.5pc@R=0pc{
V^{(d)}_{N} & V^{(d)}_{N+1}\ar[l] &\dots\ar[l] & V^{(d)}_{N+\ell}\ar[l]\\
(\R_{\G,\beta})_N & (\R_{\G,\beta})_{N+1} & & (\R_{\G,\beta})_{N+\ell}
}$$
where each transformation is the blow-up at the exceptional hypersurface $H_{N+i}\subset V^{(d)}_{N+i}$. Hence each transformation is the identity map.

After the $N+\ell$ monoidal transformations, the exponent of $t$ in the $j$-th coefficient of $f_{p^e}^{N+\ell}$ is $N(r_j-j)-\ell j$; and $\ord((\R_{\G,\beta})_{N+\ell})(\xi_{H_{N+\ell}})=\frac{N(\alpha-s)}{s}-\ell$. Therefore, $\id{z_{N+\ell},t}$ is a permissible center if and only if $N(r_j-j)-\ell j\geq j$ ($j=1,\dots,p^e$) and $N(\alpha-s)-\ell s\geq s$. In particular, this requires that
$$\ell\leq\min_{1\leq j\leq n}\Big\{N\big(\frac{r_j}{j}-1\big)-1,\ N\big(\frac{\alpha}{s}-1\big)-1 \Big\}=N\big(Sl(p\P)(\x)-1\big)-1.$$

Set 
$$\ell_{N}=\lceil N(q-1)-1\rceil.$$
 We claim that this is the highest length of a sequence as (\ref{seqweq}). Namely, that $H_{N+\ell_N}$ is not a component of $\widetilde{\beta}_{N+\ell_N}(\Sing(\G_{N+\ell_N}))$ in $V^{(d)}_{N+\ell_N}$. 

Check that $Sl(p\P_{N+\ell_N})(\xi_{H_{N+\ell_N}})<1$, where $\xi_{H_{N+\ell_N}}$ is the generic point of $H_{N+\ell_N}$. We show now that $p\P_{N+\ell_N}$ is well-adapted at $\xi_{H_{N+\ell_N}}$. This, together with Proposition \ref{propHG}, would ensure that the previous claim holds.

Firstly, suppose that $q=Sl(p\P)(\x)=\ord(\R_{\G,\beta})(\x)=\frac{\alpha}{s}$. In this case, $N(\alpha-s)-\ell_N\cdot s\leq N(r_{p^e}-p^e)-\ell_N\cdot p^e$. So, $Sl(p\P_{N+\ell_N})(\xi_{H_{N+\ell_N}})=\ord((\R_{\G,\beta})_{N+\ell_N})(\xi_{H_{N+\ell_N}})$ and hence $p\P_{N+\ell_N}$ is well-adapted to $\xi_{H_{N+\ell_N}}$

Assume now that $q=Sl(p\P)(\x)=\frac{\nu_\x(a_{p^e})}{p^e}=\frac{r_{p^e}}{p^e}<\ord(\R_{\G,\beta})(\x)$ and that $N\frac{r_{p^e}}{p^e}\not\in\mathbb{Z}$. Then, $N(r_{p^e}-p^e)-\ell_N\cdot p^e\leq N(\alpha-s)-\ell_N\cdot s$, so $Sl(p\P_{N+\ell_N})(\xi_{H_{N+\ell_N}})=\frac{\nu_{\xi_{H_{N+\ell_N}}}(a_{p^e})}{p^e}$ and $0<Sl(p\P_{N+\ell_N})(\xi_{H_{N+\ell_N}})<1$. This ensures the claim.

Finally assume that $q=Sl(p\P)(\x)=\frac{\nu_\x(a_{p^e})}{p^e}=\frac{r_{p^e}}{p^e}<\ord(\R_{\G,\beta})(\x)$ and that $N\frac{r_{p^e}}{p^e}\in\mathbb{Z}$. 
Note that $Sl(p\P_{N+\ell})(\xi_{H_{N+\ell}})=\frac{\nu_{\xi_{H_{N+\ell_N}}}(a'_{p^e})}{p^e}=0<\ord^{(d-1)}((\R_{\G,\beta})_{N+\ell_N})(\xi_{H_{\ell+N}})$, and that $In_{\xi_{H_N+\ell_N}}(a'_{p^e})$ can be naturally identified with $In_\x(a_{p^e})$, which is not a $p^e$-th power (as $p\P$ is well-adapted at $\x$). Proposition \ref{propHG}  ensures finally that $H_{N+\ell_N}$ is not a component of $\beta_{N+\ell_N}(\Sing(\G_{N+\ell_N}))$.

The previous arguments show that the rational number $q=Sl(p\P)(\x)$ is completely characterized by the weak equivalence class of $\G$. To this end, note that
$$\lim_{N\to\infty}\frac{\ell_N}{N}=q-1.$$
\end{proof}

Further consequences of the previous discussion are the following:

\begin{corollary}
Let $\G$ be a Rees algebra. Fix a $p$-presentation $p\P=p\P(\beta,z,f_{p^e}(z))$ well-adapted to $\G$ at $x\in\Sing(\G)$. Then, 
$$v-ord^{(d-1)}(\G)(x)=Sl (p\P)(\beta(x)).$$
\end{corollary}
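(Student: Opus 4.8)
The plan is to deduce this corollary from Main Theorem 1 (Theorem \ref{result43}) together with the observation that $v-ord^{(d-1)}(\G)(x)$ was \emph{defined} in Corollary \ref{def_vord} precisely as $Sl(p\P)(\beta(x))$ for \emph{some} choice of well-adapted $p$-presentation, and the content of Main Theorem 1 is that this value does not depend on that choice (nor on $\beta$). So the statement is essentially a restatement bundled with the observation that the slope is computed by \emph{any} well-adapted $p$-presentation, not just the one used to set up the definition.

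First I would recall the definition: $v-ord^{(d-1)}(\G)(x)$ is, by Corollary \ref{def_vord}, the common value $\beta_r-ord(\G_r)(\beta_r(x))$, which by the second assertion of Theorem \ref{result43} equals $Sl(p\P)(\beta_r(x))$ whenever $p\P=p\P(\beta_r,z,f_{p^e})$ is well-adapted to $\G_r$ at $x$. Specializing the sequence (\ref{unaseq}) to the trivial (length-zero) sequence, so that $\G_r=\G$ and $\beta_r=\beta$, gives exactly the claimed identity $v-ord^{(d-1)}(\G)(x)=Sl(p\P)(\beta(x))$ for any $p$-presentation $p\P$ involving any transversal projection $\beta$ which is well-adapted to $\G$ at $\beta(x)$. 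Since $\G$ is assumed to be a simple absolute differential algebra, it is $\beta$-differential for every transversal $\beta$, so the hypotheses of Theorem \ref{result43} are met.

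The only genuine subtlety is that Theorem \ref{result43} as stated in Section \ref{secMT} is phrased for algebras $\G_r$ arising from a diagram (\ref{cuadrado}), whereas here $\G$ is given directly; but the length-zero sequence shows $\G$ itself is of this form with $\beta_r=\beta$ $r$-transversal for $r=0$, so no loss. I would also invoke the explicit formula in Corollary \ref{def_vord}, namely $v-ord^{(d-1)}(\G)(x)=\min\{\nu_{\beta(x)}(a_{p^e})/p^e,\ \ord(\R_{\G,\beta})(\beta(x))\}$, and Theorem \ref{rmk:x:restr}, which identifies the right-hand side with $Sl(p\P)(\beta(x))$ whenever $p\P$ is well-adapted, to make the chain of equalities completely explicit and independent of a forward reference.

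The main obstacle, such as it is, is bookkeeping: one must check that a well-adapted $p$-presentation exists (guaranteed by the cleaning process of Proposition \ref{prpri} together with its finiteness, Remark \ref{rk211}, which terminates in one of the well-adapted cases of Definition \ref{defweladap}), and that Main Theorem 1 applies verbatim to $\G$. There is no new computation; the corollary is a direct consequence once these citations are assembled. Concretely: pick a transversal $\beta$ and a $p$-presentation, clean it to a well-adapted one at $\beta(x)$ via Proposition \ref{prpri}; apply the second clause of Theorem \ref{result43} to get $\beta-ord^{(d-1)}(\G)(\beta(x))=Sl(p\P)(\beta(x))$; and recall that $v-ord^{(d-1)}(\G)(x)$ is by Corollary \ref{def_vord} this very quantity, independent of $\beta$. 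That closes the argument.
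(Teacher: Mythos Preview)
Your proposal is correct and matches the paper's implicit argument: the corollary is stated in the paper immediately after the proof of Main Theorem~1 with no separate proof, precisely because it is an immediate consequence of that theorem together with the definition of $v\text{-}ord^{(d-1)}(\G)(x)$ given in Corollary~\ref{def_vord}. Your only superfluous ingredient is the appeal to the cleaning process (Proposition~\ref{prpri}, Remark~\ref{rk211}) to produce a well-adapted $p$-presentation---the hypothesis already hands you one, so existence is not at issue here.
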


\begin{corollary}
Let $\G$ be a Rees algebra. Fix two transversal projections $V^{(d)}\overset{\beta}{\longrightarrow}V^{(d-1)}$ and $V^{(d)}\overset{\beta'}{\longrightarrow}V'^{(d-1)}$. For any $x\in\Sing(\G)$
$$\beta-\ord(\G)(\beta(x))=\beta'-\ord(\G)(\beta'(x))\ (=v-ord^{(d-1)}(\G)(x)).$$
\end{corollary}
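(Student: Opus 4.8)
\smallskip
\noindent\textbf{Proof proposal.}
The idea is to identify both sides with the intrinsic number $v-ord^{(d-1)}(\G)(x)$, using the preceding Corollary together with the cleaning process of Section~\ref{sec555}. I would first recall that, by Definition~\ref{def:z:adap:x}, $\beta-\ord(\G)(\beta(x))=\max\{Sl(p\P)(\beta(x))\}$, the maximum being taken over all $p$-presentations $p\P\in\mathcal{F}(\G,\beta,\beta(x))$, and note that this family is non-empty: since $x\in\Sing(\G)$ one may choose a $\beta$-transversal section $z$ of order one at $\calo_{V^{(d)},x}$, which by Remark~\ref{rmk45} yields a $p$-presentation with $Sl(p\P)(\beta(x))\ge1$. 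Hence it suffices to prove $\max\{Sl(p\P)(\beta(x))\}=v-ord^{(d-1)}(\G)(x)$; the same argument applied to $\beta'$ then closes the proof, because $v-ord^{(d-1)}(\G)(x)$ depends only on $x$ and $\G$ by Corollary~\ref{def_vord} (which rests on Theorem~\ref{result43}).

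For the inequality ``$\le$'' I would take an arbitrary $p\P=p\P(\beta,z,f_{p^e}(z))$ in $\mathcal{F}(\G,\beta,\beta(x))$. If $Sl(p\P)(\beta(x))=0$ there is nothing to prove, since $v-ord^{(d-1)}(\G)(x)\ge1$ (a $p$-presentation well-adapted at $\beta(x)$ has positive, hence $\ge1$, slope by Proposition~\ref{prop36}, as $\beta(x)$ is the image of a singular point and the alternatives with zero slope in Definition~\ref{defweladap}~(2) are excluded by Proposition~\ref{prop36}~iib)). If $Sl(p\P)(\beta(x))>0$, I would apply the cleaning process: by Proposition~\ref{prpri}, iterated as in Remark~\ref{rk211} — the iteration being finite because $Sl$ takes only finitely many values (Remark~\ref{rk32}) — one obtains, after finitely many changes $z\mapsto z+\alpha$, a $p$-presentation $p\P'$ well-adapted to $\G$ at $\beta(x)$ with $Sl(p\P)(\beta(x))\le Sl(p\P')(\beta(x))$. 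By the preceding Corollary $Sl(p\P')(\beta(x))=v-ord^{(d-1)}(\G)(x)$, so $Sl(p\P)(\beta(x))\le v-ord^{(d-1)}(\G)(x)$. The reverse inequality is immediate: cleaning the $p$-presentation built from a section of order one at $x$ produces a well-adapted one realizing the value $v-ord^{(d-1)}(\G)(x)$. Thus $\beta-\ord(\G)(\beta(x))=v-ord^{(d-1)}(\G)(x)$, and likewise $\beta'-\ord(\G)(\beta'(x))=v-ord^{(d-1)}(\G)(x)$, which gives the claim.

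The genuine content has already been absorbed into Main Theorem~1 (the weak-equivalence argument preceding this corollary) and into the cleaning process; for the corollary itself the only points requiring care are the existence of a \emph{well-adapted} representative in $\mathcal{F}(\G,\beta,\beta(x))$ — i.e.\ termination of the cleaning iteration — and the appeal to Theorem~\ref{result43}/Corollary~\ref{def_vord} to guarantee that the common value $v-ord^{(d-1)}(\G)(x)$ is intrinsic rather than an artefact of the choices used to compute it. I do not expect a new obstacle beyond these.
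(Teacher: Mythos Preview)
Your argument is correct in substance and matches the paper's intended (implicit) deduction. One caveat: invoking Corollary~\ref{def_vord} for the intrinsicness of $v\text{-}ord^{(d-1)}(\G)(x)$ is circular, since that corollary rests on Theorem~\ref{result43}, whose first assertion is precisely the present statement; the clean fix is to bypass the symbol $v\text{-}ord^{(d-1)}$ and appeal directly to the restated Main Theorem~1 proved just above in Section~\ref{sec777}, which shows that the well-adapted slope $Sl(p\P)(\beta(x))$ depends only on the weak-equivalence class of $\G$ and on $x$---your cleaning argument then yields $\beta\text{-}\ord(\G)(\beta(x))=Sl(p\P)(\beta(x))=Sl(p\P')(\beta'(x))=\beta'\text{-}\ord(\G)(\beta'(x))$ without circularity.
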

\end{parrafo}

\section{{The tight monomial algebra and Proof of Main Theorem 2.}}\label{sec888}

\begin{parrafo}

We address here the Proof of Main Theorem 2.

\end{parrafo}

\begin{theorem} \label{thm:tightmon}{\rm ({\bf Main Theorem 2}).}
Fix a sequence of permissible transformations as (\ref{unaseq}). Let $\m_r W^s$ denote the tight monomial algebra defined in \ref{asert}.
Then, at any closed point $x\in\Sing(\G_r)$,
$\m_r W^s$ has monomial contact with $\G_r$, i.e., there is a  $\beta_r$-transversal section $z$ of order one at  $\calo_{V^{(d)}_r,x}$ for which
$$\G_r\subset \id{z}W\odot \m_r W^s.$$
\end{theorem}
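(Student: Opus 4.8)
The strategy is to reduce the global statement to a local one at each closed point $x\in\Sing(\G_r)$, and there to produce the required transversal section from a well-adapted $p$-presentation compatible with the tight monomial algebra. The starting point is the simultaneous adaptation result (Proposition \ref{simult:adap}), which provides, locally at $x$, a $p$-presentation $p\P(\beta_r,z,f_{p^e})$ with $f_{p^e}(z)=z^{p^e}+a_1z^{p^e-1}+\dots+a_{p^e}$, that is well-adapted to $\G_r$ at $\x=\beta_r(x)$ and compatible with $\m_r W^s$. By Remark \ref{rmk56}, such a $z$ automatically has order one at $\calo_{V_r^{(d)},x}$, so the only thing left to check is the containment $\G_r\subset\id{z}W\odot\m_r W^s$.

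For the containment, I would use the local presentation of $\G_r$ guaranteed by Proposition \ref{pr_local} (equivalently Definition \ref{pr_locp}): up to integral closure,
$$\G_r=\calo_{V_r^{(d)}}[f_{p^e}(z)W^{p^e},\ \Delta^{(\alpha)}(f_{p^e}(z))W^{p^e-\alpha}]_{1\le\alpha\le p^e-1}\odot\beta_r^*(\R_{\G,\beta})_r.$$
So it suffices to show that each of these generators lies in $\id{z}W\odot\m_r W^s$. For the elimination part, compatibility (Definition \ref{defmon}(2)(i)) gives $(\R_{\G,\beta})_r\subset\calo_{V_r^{(d-1)}}[\m_r W^s]$, and a monomial algebra in $\m_r$ lifted from $V_r^{(d-1)}$ is certainly contained in $\id{z}W\odot\m_r W^s$ (it already sits in $\m_r W^s$). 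For the polynomial part: each coefficient $a_i$ satisfies $a_iW^i\in\calo_{V_r^{(d-1)}}[\m_r W^s]$ by compatibility (Definition \ref{defmon}(2)(ii)); hence each monomial $a_iz^{p^e-i}$ of $f_{p^e}(z)$ and, more generally, each monomial of $\Delta^{(\alpha)}(f_{p^e}(z))=\sum c_j a_j z^{\,p^e-\alpha-j}$ (with $c_j\in k$) is a product of a power of $z$ with a section in $\m_r^{i}W^{si}$ of the appropriate weight, except for the top term $z^{p^e}$ itself, which lies in $\id{z}^{p^e}W^{p^e}\subset(\id{z}W)^{p^e}$. Collecting weights, each generator of degree $m$ is divisible, in the weighted sense of Remark \ref{rmkintM}, by $(\id{z}W\odot\m_r W^s)$ to the $m$-th power, which is exactly the statement $f_{p^e}(z)W^{p^e}, \Delta^{(\alpha)}(f_{p^e}(z))W^{p^e-\alpha}\in\id{z}W\odot\m_r W^s$.

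The remaining subtlety is the transition between the local picture and the asserted statement, and the role of Main Theorem 1. The tight monomial algebra $\m_r W^s$ is defined intrinsically via $v\text{-}\ord^{(d-1)}$ (see \ref{asert}), which by Main Theorem 1 (Theorem \ref{result43}) coincides with $Sl(p\P)$ on any well-adapted $p$-presentation; this is what guarantees that the exponents $h_i$ appearing in $\m_r W^s$ are the correct ones so that compatibility in the sense of Definition \ref{defmon} can actually be achieved at $x$ — i.e. that the weighted divisibilities $a_iW^i\in\calo_{V_r^{(d-1)}}[\m_r W^s]$ really hold for the \emph{tight} exponents and not merely for some smaller monomial algebra. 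Concretely one argues by induction on the length $r$ of the sequence (\ref{unaseq}): the base case is vacuous since $\G$ has no exceptional components, and the inductive step is Proposition \ref{ppreOK}(2), which shows that if $p\P$ is compatible with $\m_{r-1}W^s$ then its transform $p\P_1$ is compatible with $\m_r W^s$, the new exponent $\gamma/s=v\text{-}\ord^{(d-1)}(\G_{r-1})(\xi_{C_{r-1}})-1$ being precisely $h_r/s$ by the definition in \ref{asert}. I expect the main obstacle to be precisely this bookkeeping step: verifying that the transform of a compatible, well-adapted $p$-presentation remains compatible with the \emph{tight} monomial algebra at \emph{all} closed points of $\Sing(\G_r)$ lying over a given point (not just at generic points of exceptional divisors), which is where the case analysis of Proposition \ref{ppreOK} and the $\tau$-control of Remark \ref{lem:z1:sing} have to be invoked carefully, together with the semicontinuity estimate of Remark \ref{rmk:Slxy} to pass from closed points to the generic points used in defining $h_i$.
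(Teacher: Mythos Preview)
Your plan matches the paper's proof: induction on the length $r$ of the sequence, base case vacuous, and in the inductive step one starts from a $p$-presentation compatible with $\m_rW^s$, applies Proposition \ref{simult:adap} to make it simultaneously well-adapted at $\beta_r(x)$ and at $\xi_{\beta_r(C)}$, and then checks that the transform $p\P_{r+1}$ is compatible with $\m_{r+1}W^s$ at each closed $x'\in\Sing(\G_{r+1})$ over $x$. Your reduction of the containment $\G_r\subset\id{z}W\odot\m_rW^s$ to the two compatibility conditions is exactly Definition \ref{defmon}(2).

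There is one real gap in the inductive step. Both tools you invoke for it, Proposition \ref{ppreOK} and Remark \ref{lem:z1:sing}, carry the standing hypothesis $\tau_{\G_r,x}=1$; neither says anything when $\tau_{\G_r,x}\geq 2$, i.e.\ when $\In_x(f_{p^e})\neq Z^{p^e}$. You flag this as ``the main obstacle'' but the ingredients you list do not resolve it. The paper handles these cases by a separate, short argument: if $\In_x(f_{p^e})=Z^{p^e}+A_{p^e}$ with $A_{p^e}\neq 0$ not a $p^e$-th power, or if some $\In_x(a_j)\neq 0$ for $j<p^e$, then $Sl(p\P_r)(\beta_r(x))=1$; combined with Remark \ref{rmk:Slxy} and $C\subset\Sing(\G_r)$ this forces $Sl(p\P_r)(\xi_{\beta_r(C)})=1$ and hence $h_{r+1}=0$. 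So in the $\tau\geq 2$ situation the new exceptional component contributes nothing to $\m_{r+1}W^s$, and one only has to check compatibility with the strict transform $\m_r'W^s$. For $x'\in\Sing(\G_{r+1})\cap H_{r+1}$ either $\beta_{r+1}(x')\in V(\m_r')$, in which case every $a_i^{(1)}$ vanishes at $\beta_{r+1}(x')$ and so $x'\in\{z_1=0\}$ after all, or $\beta_{r+1}(x')\notin V(\m_r')$, in which case $\m_{r+1}W^s$ is locally trivial and there is nothing to prove. This observation, not the case analysis inside Proposition \ref{ppreOK}, is what closes the induction outside the $\tau=1$ locus.
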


\begin{proof}

Assume by induction in $r$ that, locally at any closed point $x\in\Sing(\G_r)$
 the algebra $\m_r W^s$ has monomial contact with $\G_r$, i.e., for some $\beta_r$-transversal section $z'$ of order one at the point,
$$\G_r\subset \id{z'}W\odot \m_r W^s.$$
The condition is vacuous  for $r=0$. 

Let $C$ be a permissible center, and consider the monoidal transformation at $C$, say $V_r^{(d)}\overset{\pi_C}{\longleftarrow}V_{r+1}^{(d)}$. The task is to prove that $\G_{r+1}$ has monomial contact with the new tight monomial algebra, say 
\begin{equation}\label{algo}
\m_{r+1}W^s=\calo_{V^{(d-1)}}[I(H_1)^{h_1}\dots I(H_r)^{h_r}I(H_{r+1})^{h_{r+1}}W^s].
\end{equation}

Fix a $p$-presentation $p\P'_r=p\P'_r(\beta_r,z',f'_{p^e})$ involving $z'$ at $V_r^{(d)}$. Note that the induction hypothesis ensures that $p\P'_r$ is compatible with $\m_rW^s$. Proposition \ref{simult:adap} B) applies here to show that $p\P'_r$ can be modified into a new $p$-presentation, say $p\P_r$ (doing a change of variables of the form $z=z'+\alpha$), so that $p\P_r$ is compatible with $\m_rW^s$ and also well-adapted to $\G_r$ both at $x$ and $\xi_{\beta(C)}$, the generic point of $\beta(C)$

%
%

%
%

We claim that, locally at any closed point $x'\in\Sing(\G_{r+1})$ mapping to $x$, there is a $p$-presentation with the properties:
\begin{itemize}
\item it is compatible with the strict transform of  the monomial algebra $\m_r W^s$, 
\item it is well-adapted to $\G_{r+1}$ at $\xi_{H_{r+1}^{(d)}}$. 
\end{itemize}
That is, locally at any closed point $x'\in\Sing(\G_{r+1})$, there is a $p$-presentation which is well-adapted simultaneously to every $\xi_{H_i^{(d-1)}}$ ($i=1,\dots,r+1$). This, in particular, ensures our task.

If $x'\not\in H_{r+1}^{(d)}$, then Remark \ref{rmk62same} shows that there is an identification between the $p$-presentations $p\P_r$ of $\G_r$ and $p\P_{r+1}$ of $\G_{r+1}$ (in an open subset). Thus the claim follows straightforward  in this case. 

Suppose that $x'\in \Sing(\G_{r+1})\cap H_{r+1}^{(d)}$.

Firstly, we address the claim under the assumption that $\In_x(f_{p^e})=Z^{p^e}$. In this case, $\pi^{-1}(x)\cap\Sing(\G_{r+1})\subset\{z_1=0\}$, where $z_1$ denotes the strict transform of $z$ (see Remark \ref{lem:z1:sing}). Moreover, $p\P_{r+1}$ is well-adapted to $\G_{r+1}$ at $\xi_{H_{r+1}^{(d)}}$ (see Proposition \ref{ppreOK}). 
Let
$$f_{p^e}^{(1)}(z_1)=z_1^{p^e}+a_1^{(1)}z_1^{p^e-1}+\dots+a_{p^e}^{(1)}$$
be the strict transform of $f_{p^e}(z)=z^{p^e}+a_1z^{p^e-1}+\dots+a_{p^e}$.
Since $a_iW^i\in\m_r W^s$, then $a_i^{(1)}W^i\in \m'W^s$ for $i=1,\dots, p^e$. Here $\m' W^s$ denotes the strict transform of $\m_r W^s$. On the other hand, $a_i^{(1)}W^i\in I(H^{(d)}_{r+1})^{h_{r+1}}W^s$, since $p\P_{r+1}$ is well-adapted at $\xi_{H^{(d)}_{r+1}}$(recall that $q_{H^{(d)}_{r+1}}=\frac{h_{r+1}}{s}$).
Thus $a_i^{(1)}W^i\in\m_{r+1}W^s$ (the new tight monomial algebra).

The same arguments applies here to show that $\m_{r+1}W^s\subset(\R_{\G,\beta})_{r+1}$. Then, $p\P_{r+1}$ is compatible with $\m' W^s$ and well-adapted to $\G_{r+1}$ at $\xi_{H_{r+1}^{(d-1)}}$. Therefore, $p\P_{r+1}$ is compatible with $\m_{r+1}W^s$. Hence,
$$\G_{r+1}\subset\id{z_1}W\odot\m_{r+1}W^s$$
in case $\In_x(f_{p^e})\not=Z^{p^e}$.

Assume now that $\In_x(f_{p^e})\not=Z^{p^e}$, then two different cases can occur:
 
$\bullet$ Suppose firstly that $\tau_{\G,x}\geq 2$ and $\In_x(f_{p^e})=Z^{p^e}+A_{p^e}$ where $A_{p^e}$ is not a $p^e$-th power and free of the variable $Z$. In this case, $Sl(p\P_r)(\xi_{\beta_r(C)})=1$, and this would ensure that $h_{r+1}=0$ in (\ref{algo}).
Let $x'\in\Sing(\G_{r+1})\cap H^{(d)}_{r+1}$ be a closed point such that $\pi_C(x')=x$. Assume that $\beta_{r+1}(x')\in V(\m'_r)(\subset V^{(d-1)}_{r+1})$, where $\m_r'W^s$ denotes the stric transform of $\m_{r}W^s$ in $V_{r+1}^{(d-1)}$. One can check that $x'\in\{z_1=0\}$ as all coefficients $a_i^{(1)}$ vanish at $\beta_{r+1}(x')$ for $i=1,\dots,p^e$. The same argument used before shows that $p\P_{r+1}$ is compatible with $\m_{r+1} W^s$.

Assume now that $\beta_{r+1}(x')\not\in V(\m'_r)$. Locally at $\beta_{r+1}(x')$, the monomial algebra is $\m_{r+1} W^s$ which has the same integral closures as $\calo_{V_{r+1}^{(d-1)}}[W]$ and there is nothing to prove in this case.

\vspace{0.2cm}

$\bullet$ Finally, suppose that $\In_x(f_{p^e}(z))=Z^{p^e}+A_jZ^{p^e-j}+\dots$ with $A_j\not=0$ and $j<p^e$. In this case, $\ord(\R_{\G,\beta})(\xi_{\beta(C)})=1$ and  hence $h_{r+1}=0$ in (\ref{algo}).
Similar arguments as those used before apply here to show the compatibility of the strict transform with the monomial algebra: whenever the point  $x'\in V(\m_r')$, then $x'\in\{z_1=0\}$. If not, the monomial algebra is locally of the form $\calo_{V^{(d-1)}_{r+1}}[W]$. This concludes the proof.
\end{proof}

\end{part}



\begin{thebibliography}{W}

\bibitem{Ab1} S. Abhyankar, Local uniformization on algebraic surfaces over ground fields of characteristic $p\not=0$, \emph{Ann. of Math. (2)} {\bf 63} (1956), 491--256.

\bibitem{Ab3} S. Abhyankar, \emph{Ramification theoretic methods in algebraic geometry}. Annals of Mathematics Studies, no. {\bf 43}. Princeton University Press, Princeton, N.J. 1959 ix+96 pp.


\bibitem{AlKe} A. Altman, S. Kleiman, \emph{Introduction to Grothendieck Duality Theory}, Lecture Notes in Mathematics, {\bf 146}, Springer-Verlag, Berlin(1970).

\bibitem{tesis} A. Benito, \emph{Invariantes de singularidades en caracter\'istica positiva}. Ph. D. Thesis, Universidad Aut\'onoma de Madrid, Madrid 2010.

\bibitem{Ben1} A. Benito, The $\tau$-invariant and elimination, J. Algebra {\bf 324}, (2010) 1903--1920.


\bibitem{BeVJapon} A. Benito, O. E. Villamayor U., Monoidal transformations of singularities in positive characteristic. Preprint 2008. Available at \verb+http://arxiv.org/abs/0811.4148v2+, 26 November 2008.

\bibitem{BeV2} A. Benito, O. E. Villamayor U., On elimination of variables in the study of singularities in positive characteristic. Preprint 2010. 

\bibitem{BeVsup} A. Benito, O. E. Villamayor U., Techniques for the study of singularities with applications to resolution of $2$-dimensional schemes. Preprint 2010.

\bibitem{BV3} A. Bravo, O. E. Villamayor U., Singularities in positive characteristic, stratification and simplification of the singular locus, \emph{Adv. Math.}, {\bf 224} (4) (2010) 1349--1418.

\bibitem{Co} V. Cossart, \emph{Sur le poly\`edre caract\'eristique}. Th\`ese d'\'Etat. 424 pages. Univ. Paris--Sud, Orsay 1987.


\bibitem{CJS} V. Cossart, U. Jannsen, S. Saito, Canonical embedded and non-embedded resolution of singularities for excellent two-dimensional schemes. Preprint 2009. Available at
\verb+http://arxiv.org/abs/0905.2191v1+, 13 May 2009.

\bibitem{CP1} V. Cossart, O. Piltant, Resolution of singularities of threefolds in positive characteristic. I-II, \emph{J. Algebra} {\bf 320} (2008), no. 3, 1051--1082, \emph{ibid} {\bf 321} (2009), no. 7, 1836--1976.


\bibitem{Cut} S. D. Cutkosky, Resolution of singularities for 3-folds in positive characteristic, \emph{Amer. J. Math.} {\bf 131} (2009), no.~ 1, 59--127.  


\bibitem{Cut3} S. D. Cutkosky, A skeleton key to Abhyankar's proof of embedded resolution of characteristic $p$ surfaces. Preprint 2010.

\bibitem{EncVil06} S. Encinas, O. E. Villamayor U.,  Rees algebras and resolution of singularities. \emph{Actas del ``XVI Coloquio Latinoamericano de Algebra'' (Colonia del Sacramento, Uruguay, 2005)}, Library of the Revista Matem\'atica
Iberoamericana (2007) (W. F. Santos, G. Gonz\'alez-Springer, A. Rittatore, A. Solotar, editors), pp. 1--24.

\bibitem{Giraud1975} J.~Giraud, Contact maximal en caract\'eristique positive, \emph{Ann. Sci. \'Ecole Norm. Sup. (4)}  {\bf 8}  (1975), no. 2, 201--234.

\bibitem{Giraud1983} J.~Giraud, Forme normale d'une fonction sur une surface de caract\'eristique positive, \emph{Bull. Soc. Math. France}  {\bf 111}  (1983),  no. 2, 109--124. 

\bibitem{GiraudLN} J. Giraud, Condition de Jung pour les rev\^etements radiciels de hauteur $1$.  \emph{Algebraic geometry (Tokyo/Kyoto, 1982)}, pp. 313--333,
Lecture Notes in Mathematics {\bf1016}, Springer, Berlin, (1983). 

\bibitem{HaKang} H. Hauser, Wild singularities and kangaroo points for the resolution in positive characteristic. Preprint 2009. Available at 
\verb+http://homepage.univie.ac.at/herwig.hauser/Publications/wild-singularities-july-29.pdf+, 29 July 2009.

\bibitem{HauP} H. Hauser, On the problem of resolution of singularities in positive characteristic (Or: A proof we are still waiting for), \emph{Bull. Amer. Math. Soc. (N.S.)} {\bf 47} (2010), no1, 1--30.

\bibitem{HaWa} H. Hauser, D. Wagner, Two new invariants for the resolution of surfaces in positive characteristic. To appear.

\bibitem{Hir64} H.~Hironaka, Resolution of singularities
 of an algebraic variety over a field of characteristic zero I-II, \emph{Ann. of Math. (2)} {\bf 79} (1964), 109--203; \emph{ibid. (2)}  {\bf 79}  (1964) 205--326. 

\bibitem{Hironaka06} H. Hironaka, Program for resolution of singularities in characteristics $p > 0$. Notes 
from lectures at the Clay Mathematics Institute, September 2008.

\bibitem{Kaw} H. Kawanoue, Toward resolution of singularities over a field of positive characteristic. I. Foundation; the language of the idealistic filtration, \emph{Publ. Res. Inst. Math. Sci.} {\bf 43} (2007), no. 3, 819--909. 

\bibitem{KM} H. Kawanoue, K. Matsuki, Toward resolution of singularities over a field of positive characteristic. (The idealistic filtration program). \emph{Publ. Res. Inst. Math. Sci.} {\bf 46} (2010), no. 2, 359--422.
 
\bibitem{Lip} J. Lipman, Desingularization of two-dimensional schemes, \emph{Ann. Math. (2)} {\bf 107} (1978), no. 1, 151--207. 

\bibitem{Moh} T. T. Moh, On a stability theorem for local uniformization in characteristic $p$,  \emph{Publ. Res. Inst. Math. Sci.} {\bf 23}  (1987),  no. 6, 965--973. 

\bibitem{Moh96} T. T. Moh, On a Newton polygon approach to the uniformization of singularities in characteristic $p$, \emph{Algebraic Geometry and Singularities} (A. Campillo, L. Narv\'aez editors). Proceeding of the Conference on Singularities La R\'abida. Birkh\"auser 1996.

\bibitem{Nar1}R. Narasimhan, Monomial equimultiple curves in positive characteristic, \emph{Proc. Amer. Math. Soc.} {\bf 89} (1983), 402--413. 

\bibitem{Nar2} R. Narasimhan, Hyperplanarity of the equimultiple locus, \emph{Proc. Amer. Math. Soc.} {\bf 87} (1983), 403--406.

\bibitem{Oda1987} T. Oda, Infinitely very near singular points, {\em Complex analytic
singularities},  Adv. Studies in Pure Math. {\bf8}  (North-Holland, 1987) 363--404.

\bibitem{Pil} O. Piltant, On the Jung method in positive characteristic, \emph{
Proceedings of the International Conference in Honor of Frédéric Pham (Nice, 2002)}, 
Ann. Inst. Fourier (Grenoble) {\bf 53} (2003), no. 4, 1237--1258.

\bibitem{VV3} O. E. Villamayor U., Rees algebras on smooth schemes: integral closure and higher differential operators, \emph{Rev. Mat. Iberoamericana} {\bf 24} (2008), no. 1, 213--242.

\bibitem{VV4}
O. E. Villamayor U., Hypersurface singularities in positive
characteristic, \emph{Adv. Math.} {\bf 213}  (2007),  no. 2, 687--733. 

\bibitem{VKyoto}
O. E. Villamayor U., Elimination with applications to singularities in positive characteristic,  \emph{Publ. Res. Inst. Math. Sci.} {\bf 44}  (2008),  no. 2, 661--697. 

\bibitem{Wlo1}J. ~W\l odarczyk, Simple Hironaka resolution in characteristic zero,  \emph{J. Amer. Math. Soc.} {\bf 18}  (2005),  no. 4, 779--822

\bibitem{Wlo2}J. ~W\l odarczyk, Program on resolution of singularities in characteristic $p$. Notes from lectures at RIMS, Kyoto, December 2008. 

\end{thebibliography}
\end{document}